\newtheorem{global-theorem}{Theorem}
\newtheorem{theorem}{Theorem}[section]
\newtheorem{lemma}[theorem]{Lemma}
\newtheorem{corollary}[theorem]{Corollary}
\newtheorem{proposition}[theorem]{Proposition}
\theoremstyle{definition}
\newtheorem{definition}[theorem]{Definition}
\newtheorem{example}[theorem]{Example}
\newtheorem{remark}[theorem]{Remark}
\newcommand{\dG}{\Delta \mathfrak{G}}
\newcommand{\dC}{\Delta \mathfrak{C}}
\newcommand{\dR}{\Delta \mathfrak{R}}
\newcommand{\dD}{\Delta \mathfrak{D}}
\newcommand{\dQ}{\Delta \mathfrak{Q}}
\begin{document}

\title{Homotopy of Planar Lie Group Equivariant Presheaves}
\author{Scott Balchin}
\address{Department of Mathematics\\
University of Leicester\\
University Road, Leicester LE1 7RH, England, UK}
\email{slb85@le.ac.uk}

\begin{abstract}
We utilise the theory of crossed simplicial groups to introduce a collection of local Quillen model structures on the category of simplicial presheaves with a compact planar Lie group action on a small Grothendieck site.   As an application, we give a characterisation of equivariant cohomology theories on a site as derived mapping spaces in  these model categories.
\end{abstract}

\maketitle

\vspace{-5mm}
	
\section{Introduction}

The cyclic category of Connes, which appears in the theory of non-commutative geometry, has proven to be a very useful tool since its introduction.  Many authors have noted how cyclic objects model topological spaces with a circle action.  This was formally developed by Dwyer, Hopkins and Kan \cite{homotopycc}, Spali\'{n}ski \cite{spalinskistrong} and Blumberg \cite{blumberg} with Quillen equivalences between certain Quillen models on $\textbf{Top}^{SO(2)}$ and cyclic sets (in fact a slight modification of this category in the case of Blumberg).  The cyclic category is a special case of a wider class of categories called crossed simplicial groups \cite{loday}. In particular, it is an example of a planar crossed simplicial group.  Planar crossed simplicial groups are those which can be used to model topological spaces with a planar Lie group action.  For each planar crossed simplicial group, there is a choice of three different canonical models we can equip it with, depending on how much we want to respect the fixed point subgroups of the topological group.

Cyclic sets were used in the theory of cyclic presheaves in the work of Seteklev and {\O}stv{\ae}r \cite{cyclicsheaves} (adapted from the Masters thesis of Seteklev \cite{seteklevmaster}), where a local model structure is developed which mimics the construction of the local model on simplicial presheaves.  This is the same model which is used in the theory of homotopical algebraic geometry \cite{toen2}.  We extend this model structure to all planar crossed simplicial groups.  By construction, the local presheaf model structures that we develop here can be seen as giving the theory of equivariant $(\infty,1)$-stacks.  We use this viewpoint, and present an application for equivariant $(\infty,1)$-stacks as a setting for equivariant cohomology theories.

\subsection*{Outline of the Paper}

We begin by introducing the theory of planar crossed simplicial groups before recalling a class of model structures on their presheaf categories.  We then use these to develop local models on the categories of presheaves valued in these categories using left Bousfield localisation.  Finally, we define equivariant cohomology theories as derived mapping spaces in these model categories.  We will argue that our definition is well founded by exploring the cyclic case.

\section{Crossed Simplicial Groups}\label{sec2}
\subsection{Definition and Classification Theorem}
 Crossed simplicial groups were introduced by Loday and Fiedorowicz    \cite{loday} (and independently by Krasauskas under the name of \textit{skew-simplicial sets} \cite{krasauskas}).  Detailed accounts of the properties of these objects can be found in \cite{surface}.  Here we will recall the necessary notions that we need for our intended application.

\begin{definition}
A \textit{crossed simplicial group} is a category $\Delta \mathfrak{G}$ equipped with an embedding $i \colon \Delta \hookrightarrow \Delta \mathfrak{G}$ such that:
\begin{enumerate}
\item The functor $i$ is bijective on objects.
\item Any morphism $u\colon i[m] \to i[n]$ in $\Delta \mathfrak{G}$ can be uniquely written as $i(\phi) \circ g$ where $\phi \colon [m] \to [n]$ is a morphism in $\Delta$ and $g$ is an automorphism of $i[m]$ in $\Delta \mathfrak{G}$.  We call this decomposition the \textit{canonical decomposition}.
\end{enumerate}
\end{definition}

\begin{example}
The cyclic category of Connes (see    \cite{connes1} or    \cite{connes2}) is an example of a crossed simplicial group where $\mathfrak{G}_n = \mathbb{Z}/(n+1)$ with generator $\tau_{n}$ such that $(\tau_{n})^{n+1} = \text{id}_n$.   We will denote this category $\Delta \mathfrak{C}$.  This category is used, among other places, in the theory of cyclic homology   \cite{loday1998cyclic}.
\end{example}

\begin{example}
From the previous example we can define  new crossed simplicial group.  Let $N > 1$ be a natural number, define $\dC_N$ by setting $\mathfrak{G}_n =  \mathbb{Z}/N(n+1)$.  This is an example of a \textit{planar Lie group} and will be studied in more detail in the following section.
\end{example}

We can generate many more examples of crossed simplicial groups by using the following classification theorem.

\begin{theorem}[{\cite[Theorem 3.6]{loday}}]
Any crossed simplicial group $\Delta \mathfrak{G}$ splits uniquely as a sequence of functors
$$\Delta \mathfrak{G}_n' \to \Delta \mathfrak{G}_n \to \Delta \mathfrak{G}_n''$$
such that $\Delta \mathfrak{G} ' $ is a simplicial group and $\Delta \mathfrak{G}''$ is one of the following seven crossed simplicial groups:

\begin{itemize}
\item $\Delta$  - The \textit{trivial} crossed simplicial group.
\item $\Delta \mathfrak{C}$ -  The \textit{cyclic} crossed simplicial group, $\mathfrak{C}_n =  \mathbb{Z}/(n+1)$.
\item $\Delta \mathfrak{S}$ -  The \textit{symmetric} crossed simplicial group, $\mathfrak{S}_n =  S_{n+1}$.
\item $\Delta \mathfrak{R}$ - The \textit{reflexive} crossed simplicial group, $\mathfrak{R}_n = \mathbb{Z}/2\mathbb{Z}$.
\item $\Delta \mathfrak{D}$  - The \textit{dihedral} crossed simplicial group, $\mathfrak{D}_n = D_{n+1}$.
\item $\Delta \mathfrak{T}$  - The \textit{reflexosymmetric} crossed simplicial group, $\mathfrak{T}_n = T_{n+1} = \mathbb{Z}/2\mathbb{Z} \ltimes S_{n+1}$.
\item $\Delta \mathfrak{W}$ - The \textit{Weyl crossed simplicial group}, $\mathfrak{W}_n = W_{n+1} = \mathbb{Z}/2\mathbb{Z} \wr S_{n+1} $.
\end{itemize}
We will call these crossed simplicial groups the \textit{simple crossed simplicial groups}.  We will say that $\dG$ is of \textit{type} $\dG''$.
\end{theorem}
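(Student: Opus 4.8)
The plan is to produce a single ``universal type'' homomorphism out of $\dG$ and to read off the seven possibilities from the combinatorics of its target. First I would repackage the data of a crossed simplicial group using the canonical decomposition: composing an automorphism $g$ of $i[n]$ with a coface or codegeneracy of $\Delta$ and re-decomposing produces structure maps $d_i, s_i$ on the sequence of groups $\{\mathfrak{G}_n\}$, so that $\dG$ is equivalent to this sequence equipped with a compatible action of $\mathfrak{G}_n$ on the morphisms of $\Delta$. In particular, composing $g \in \mathfrak{G}_n$ with each vertex $v_j \colon i[0] \to i[n]$ and applying the canonical decomposition $g \circ v_j = i(\phi) \circ h$ records two pieces of data: a permutation $j \mapsto g \cdot j$ of $\{0,\dots,n\}$, and, for each $j$, an element $h \in \mathfrak{G}_0 = \operatorname{Aut}_{\dG}(i[0])$.

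These assemble into a homomorphism $\Theta \colon \dG \to \Delta\mathfrak{W}$, where in degree $n$ the target is $\mathbb{Z}/2\mathbb{Z} \wr S_{n+1}$: the symmetric factor is the vertex action, and the first wreath factor is the per-vertex ``sign'', valued in the type of $\mathfrak{G}_0$, which a short bootstrapping argument forces to be trivial or $\mathbb{Z}/2\mathbb{Z}$. The first place care is needed is in verifying that $\Theta$ is a morphism of crossed simplicial groups, i.e.\ that the signed-vertex assignment is compatible with the canonical decomposition of composites; this is the technical glue of the construction.

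Next I would set $\dG' := \ker \Theta$ and $\dG'' := \operatorname{im} \Theta$. An element of $\ker \Theta$ acts trivially on signed vertices, hence has trivial type, so the crossing in its canonical decompositions vanishes and the induced $d_i, s_i$ become genuine group homomorphisms; one checks that $\dG'$ is therefore a simplicial group. By construction $\dG''$ is a crossed simplicial subgroup of $\Delta\mathfrak{W}$, and we obtain the asserted sequence $\dG' \to \dG \to \dG''$. It then remains to classify the crossed simplicial subgroups $\{H_n \leq \mathbb{Z}/2\mathbb{Z} \wr S_{n+1}\}$. The crossed simplicial compatibility between adjacent degrees is extremely rigid: conjugation together with transport along the faces and degeneracies shows that if some $H_n$ contains a transposition then $H_m = S_{m+1}$ for all $m$, while otherwise the ``rotation part'' is generated by the cycle $\tau_n$ and is either trivial or all of $\mathbb{Z}/(n+1)$. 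Superimposed on this is the reflection $\mathbb{Z}/2\mathbb{Z}$ coming from the type of $\mathfrak{G}_0$, which may be absent or present, and when present attaches to the symmetric part in the two inequivalent ways recorded by $\Delta\mathfrak{T}$ (a single global reflection, $\mathbb{Z}/2\mathbb{Z} \ltimes S_{n+1}$) and $\Delta\mathfrak{W}$ (per-vertex signs, $\mathbb{Z}/2\mathbb{Z} \wr S_{n+1}$). Tabulating the independent choices for the rotation part ($\Delta$, $\dC$, or $\Delta\mathfrak{S}$) against the reflection data yields exactly the seven families $\Delta$, $\dC$, $\Delta\mathfrak{S}$, $\dR$, $\dD$, $\Delta\mathfrak{T}$, $\Delta\mathfrak{W}$.

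The main obstacle is precisely this combinatorial rigidity step: one must show that simplicial transport forces a single nontrivial element to generate an entire family, ruling out intermediate subgroups such as alternating or proper non-cyclic rotation subgroups, and one must correctly separate $\Delta\mathfrak{T}$ from $\Delta\mathfrak{W}$ by tracking how the sign data transforms under the degeneracies. Finally, uniqueness of the splitting follows because $\dG' = \ker \Theta$ is characterised intrinsically as the largest simplicial normal crossed simplicial subgroup, so that both $\dG'$ and the type $\dG''$ are determined by $\dG$.
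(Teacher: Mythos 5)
This theorem is not proved in the paper at all: it is imported verbatim from Fiedorowicz--Loday with a citation, so there is no in-text argument to compare yours against. Your strategy is nevertheless essentially the one used in the cited source: build a canonical morphism from $\dG$ to the hyperoctahedral (Weyl) crossed simplicial group recording the vertex permutation together with per-vertex orientation data, take the kernel (a simplicial group) and the image (a crossed simplicial subgroup of $\Delta\mathfrak{W}$), and classify the possible images by a rigidity argument. The architecture is right.

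The step I would push back on is your definition of the sign. The canonical decomposition $g\circ v_j = v_{g\cdot j}\circ h_j$ does produce elements $h_j\in\mathfrak{G}_0$, but $\mathfrak{G}_0$ is an arbitrary group (for a simplicial group it is all of $G_0$, with trivial sign), and ``the type of $\mathfrak{G}_0$'' is exactly the quotient you are in the middle of constructing, so reading the sign off from $h_j$ is circular as stated. The sign at the vertex $j$ has to be extracted from the degeneracy structure instead: the degeneracy $[n+1]\to[n]$ collapsing $\{j,j+1\}$ to $j$ doubles that vertex, and $\epsilon_j(g)\in\mathbb{Z}/2\mathbb{Z}$ records whether the corresponding element of $\mathfrak{G}_{n+1}$ obtained from $g$ by the canonical decomposition swaps the two preimages. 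This is not cosmetic: it is precisely why the kernel of the bare vertex action $\mathfrak{G}_n\to S_{n+1}$ can fail to be a sub-crossed simplicial group (the degeneracy operators push such elements to elements with nontrivial vertex action), hence why the target must be $\Delta\mathfrak{W}$ rather than $\Delta\mathfrak{S}$, and why $\Delta\mathfrak{T}$ and $\Delta\mathfrak{W}$ occur in the list at all. Relatedly, your rigidity step should carry the sign decorations throughout --- for instance the order-reversing permutation occurs only with all local orientations flipped, which is what rules out an eighth family with underlying groups $\mathbb{Z}/2\mathbb{Z}$ --- and the claim that one transposition forces all of $S_{m+1}$ needs an argument excluding, say, alternating subgroups. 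None of this changes the overall plan, but these are the points where the actual content of the classification lives.
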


\begin{example}
An example that arises form the classification theorem uses the quaternion groups $Q_n$.  These groups are dicyclic, therefore we can see that there is a crossed simplicial group $\Delta \mathfrak{Q} = \{Q_{n+1}\}_{n \geq 0}$ due to the extension $\Delta \mathfrak{R} \to \Delta \mathfrak{Q} \to \Delta \mathfrak{D}$. The dihedral (resp., quaternionic) crossed simplicial groups are the object of study in dihedral (resp., quaternionic) homology as discussed in    \cite{dihedral} and    \cite{Spaliński2000557}.
\end{example}

As with the simplex category, our interest with crossed simplicial groups lies in their presheaf categories.  We will now introduce this concept and see how this notion is an extension of simplicial objects.

\begin{definition}
Let $\Delta \mathfrak{G}$ be a crossed simplicial group and $\mathscr{C}$ be a category.  A \textit{$\Delta \mathfrak{G}$-object in $\mathscr{C}$} is defined to be a functor $X \colon (\Delta \mathfrak{G})^{op} \to \mathscr{C}$.  We shall denote such a functor as $X_\ast$ with $X_n$ being the image of $[n]$.  If $\lambda \colon [m] \to [n]$ is a morphism in $\Delta \mathfrak{G}$ we shall denote by $\lambda^\ast \colon X_n \to X_m$ the morphism $X (\lambda)$.  We shall denote the category of all $\Delta \mathfrak{G} \text{-} \mathscr{C}$ objects as $\Delta \mathfrak{G} \text{-} \mathscr{C}$.
\end{definition}

\begin{remark}
The inclusion functor $i \colon \Delta \hookrightarrow \dG$ induces a restriction $i^\ast \colon  \dG \textbf{-Set} \to \textbf{sSet}$ which forgets the $\mathfrak{G}_n$ actions.  This functor is faithful but not full.
\end{remark}

Sometimes it is more convenient to consider a $\Delta \mathfrak{G}$-object as a simplicial object with some extra structure.

\begin{proposition}[{\cite[Lemma 4.2]{loday}}]\label{gobject}
A $\Delta \mathfrak{G}$-object in a category $\mathscr{C}$ is equivalent to a simplicial object $X_\ast$ in $\mathscr{C}$ with the following additional structure:

\begin{itemize}
\item Left group actions $\mathfrak{G}_n \times X_n \to X_n$.
\item Face relations $d_i(gx) = d_i(g)(d_{g^{-1}(i)}x)$.
\item Degeneracy relations $s_i(gx) = s_i(g)(s_{g^{-1}(i)\textbf{•}}x)$.
\end{itemize}

In particular, a $\Delta \mathfrak{G}$-map $f_\ast \colon X_\ast \to Y_\ast$ is the same thing as a simplicial map such that each of the $f_n \colon X_n \to Y_n$ is $\mathfrak{G}_n$-equivariant.
\end{proposition}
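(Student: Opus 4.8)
The plan is to leverage the canonical decomposition of morphisms in $\dG$, which is the only structural input available. Since $i\colon\Delta\hookrightarrow\dG$ is bijective on objects, the objects of $\dG$ are exactly the $[n]$, and the automorphism group of $[n]$ in $\dG$ is precisely $\mathfrak{G}_n$. Thus every morphism of $\dG$ is, by the canonical decomposition, assembled from the image $i(\phi)$ of a simplicial operator $\phi$ and an automorphism $g\in\mathfrak{G}_m$. I would therefore describe a functor $X\colon(\dG)^{op}\to\mathscr{C}$ through its two ``restrictions'': the simplicial object $i^\ast X$ obtained by precomposing with $i$, and, for each $n$, the map $\mathfrak{G}_n\to\mathrm{Aut}_{\mathscr{C}}(X_n)$ recording the action of automorphisms. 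Contravariance makes this a right action, equivalently (via $g\mapsto g^{-1}$) a left action $\mathfrak{G}_n\times X_n\to X_n$; this is exactly the simplicial object with left group actions in the statement, and the appearance of $g^{-1}$ here foreshadows the indices $g^{-1}(i)$ in the relations.

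Next I would treat the reconstruction. Given a simplicial object with such group actions, define $X$ on objects by $X([n])=X_n$, and on a morphism $u=i(\phi)\circ g$ by $u^\ast=g^\ast\circ\phi^\ast$. The content is to verify functoriality. The identity is clear, so the work lies in composition: if $u=i(\phi)\circ g$ and $v=i(\psi)\circ h$ are composable, then $v\circ u=i(\psi)\circ h\circ i(\phi)\circ g$, and to return this to canonical form one must rewrite $h\circ i(\phi)$ as $i(\phi')\circ h'$. Because every $\phi$ factors as a composite of cofaces $\delta^i$ and codegeneracies $\sigma^i$, it suffices to understand the canonical decomposition of $h\circ i(\delta^i)$ and of $h\circ i(\sigma^i)$ for a single automorphism $h$. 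These decompositions are precisely what produce the index action $i\mapsto g^{-1}(i)$ together with the derived automorphisms $d_i(g)\in\mathfrak{G}_{n-1}$ and $s_i(g)\in\mathfrak{G}_{n+1}$. Comparing the reconstructed value of $X(v\circ u)$ with $u^\ast\circ v^\ast$ then amounts exactly to the stated face relation $d_i(gx)=d_i(g)(d_{g^{-1}(i)}x)$ and degeneracy relation $s_i(gx)=s_i(g)(s_{g^{-1}(i)\bullet}x)$; conversely, imposing these relations guarantees that $X$ respects composition, hence is a well-defined functor.

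For the morphism part, a natural transformation $X\Rightarrow Y$ is a family $f_n\colon X_n\to Y_n$ commuting with $u^\ast$ for every $u$ in $\dG$. Testing naturality against $u=i(\phi)$ says exactly that $f_\ast$ is a simplicial map, while testing against an automorphism $u=g$ says exactly that each $f_n$ is $\mathfrak{G}_n$-equivariant. Conversely, since every $u$ factors as $i(\phi)\circ g$, these two families of conditions together imply naturality against all of $\dG$. Hence the two notions of morphism coincide.

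The hard part will be verifying the commutation formulas, that is, extracting from the abstract canonical decomposition of $g\circ i(\delta^i)$ and $g\circ i(\sigma^i)$ the explicit combinatorial action of $\mathfrak{G}_n$ on indices and the cocycle-type derived operators $d_i,s_i$ on the groups, and confirming that the identities they satisfy are transcribed \emph{verbatim} by the displayed face and degeneracy relations. The degeneracy case in particular requires the correct extended index action (the $\bullet$ in $s_{g^{-1}(i)\bullet}$), reflecting that codegeneracies raise dimension; once the face case has been set up, this runs in parallel but demands care with the indexing.
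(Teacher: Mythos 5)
The paper does not prove this proposition; it is quoted directly from Loday--Fiedorowicz \cite[Lemma 4.2]{loday}, and your argument is precisely the standard one given there: restrict along $i$ to get the simplicial object, read off the (left) $\mathfrak{G}_n$-actions from the automorphisms, and derive the face and degeneracy relations from the uniqueness of the canonical decomposition of $g\circ i(\delta^i)$ and $g\circ i(\sigma^i)$, checking functoriality on the generators. Your outline is correct (the only glossed step is observing that the simplicial part of the canonical decomposition of $g\circ i(\delta^i)$ is again a coface, and likewise a codegeneracy in the other case, which follows since the automorphisms preserve injectivity/surjectivity of the underlying maps), so nothing further is needed.
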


\subsection{Planar Crossed Simplicial Groups}

We will now move our attention to a special class of crossed simplicial groups.  Planar crossed simplicial groups were first studied in \cite{surface} and further in \cite{cstft}.  They are important as they correspond to Lie groups that appear as structure groups of surfaces.  First of all we give the notion of the geometric realisation of a crossed simplicial group.

\begin{theorem}[{\cite[Theorem 5.3]{loday}}]\label{real}\leavevmode
\begin{enumerate}
\item Let $\Delta \mathfrak{G}$ be a crossed simplicial group.  Then the realisation $|\mathfrak{G}|$ of the simplicial set $\mathfrak{G} = \{\mathfrak{G}_n\}_{n \in \mathbb{N}}$ is a topological group.
\item The geometric realisation $|N \Delta \mathfrak{G}|$ of the nerve of the category $\Delta \mathfrak{G}$ is homotopy equivalent to the classifying space $B|\mathfrak{G}|$.
\item Let $X \colon \Delta \mathfrak{G}^{op} \to \textbf{Set}$ be a $\Delta \mathfrak{G} \text{-Set}$, then $|i^\ast X|$ has a natural $|\mathfrak{G}|$-action. 
\end{enumerate}
\end{theorem}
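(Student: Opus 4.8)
The plan is to build all three statements on the single simplicial set $\mathfrak{G}_\bullet = \{\mathfrak{G}_n\}_{n\in\mathbb{N}}$ and its realisation, proving $(1)$ as the ``regular'' case of a general cellwise construction, deducing $(3)$ from the same construction, and treating $(2)$ by a separate bisimplicial argument.

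First I would make the simplicial structure on $\mathfrak{G}_\bullet$ explicit. Given $g\in\mathfrak{G}_n$, composing the automorphism $g$ with the image of a coface $i(\delta^j)$ (respectively a codegeneracy $i(\sigma^j)$) and applying the canonical decomposition of the Definition yields a coface (respectively codegeneracy) of $\Delta$ together with an automorphism of $[n-1]$ (respectively $[n+1]$); the automorphism part defines the face maps $d_j$ (respectively degeneracies $s_j$), while the $\Delta$-part defines an induced action $j\mapsto g^{-1}(j)$ of $\mathfrak{G}_n$ on $\{0,\dots,n\}$. Associativity of composition in $\Delta\mathfrak{G}$ forces the simplicial identities, so $\mathfrak{G}_\bullet$ is a simplicial set, and the left self-multiplications $\mathfrak{G}_n\times\mathfrak{G}_n\to\mathfrak{G}_n$ are readily checked to obey exactly the twisted relations of Proposition \ref{gobject}; thus $\mathfrak{G}_\bullet$ is the \emph{regular} $\Delta\mathfrak{G}$-set, whose underlying simplicial set $i^\ast\mathfrak{G}$ realises to $|\mathfrak{G}|$ by definition.

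For $(1)$ I would construct the multiplication $\mu\colon|\mathfrak{G}|\times|\mathfrak{G}|\to|\mathfrak{G}|$ directly on cells. Writing a point of the source, via the homeomorphism $|\mathfrak{G}|\times|\mathfrak{G}|\cong|\mathfrak{G}\times\mathfrak{G}|$ in compactly generated spaces, as $(g,t,h,s)$ with $g\in\mathfrak{G}_p$, $h\in\mathfrak{G}_q$ and $(t,s)$ interior to $\Delta^p\times\Delta^q$, I would use a shuffle (prism) decomposition of $\Delta^p\times\Delta^q$ to place $(t,s)$ in a top cell $\Delta^{p+q}$, push $g$ and $h$ up to $\mathfrak{G}_{p+q}$ along the corresponding iterated degeneracies, multiply them \emph{there}, and record the resulting point of $|\mathfrak{G}|$. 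The step I expect to be the main obstacle is precisely that the self-action is \emph{not} a map of simplicial sets: the face relation carries the extra index permutation $g^{-1}(j)$, so one must verify that this twisting is exactly absorbed by the combinatorics of the shuffle, making $\mu$ well defined on the quotient defining the realisation, continuous, associative, unital (with unit the image of the identity of $\mathfrak{G}_0$), and equipped with a continuous inverse coming from levelwise inversion. Carrying out this compatibility check cell by cell is the technical heart of the theorem; the prototype is the angle-addition law on $|\mathfrak{C}|\cong SO(2)$.

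Once $(|\mathfrak{G}|,\mu)$ is known to be a topological group, $(3)$ follows by running the identical construction with the target replaced by a general $\Delta\mathfrak{G}$-set $X$: a pair of cells is shuffled up to a common dimension $p+q$ and combined through the levelwise action $\mathfrak{G}_{p+q}\times X_{p+q}\to X_{p+q}$, assembling into a map $|\mathfrak{G}|\times|i^\ast X|\to|i^\ast X|$, and the face and degeneracy relations of Proposition \ref{gobject} are exactly what is needed to verify the two action axioms against the multiplication $\mu$ established in $(1)$ -- avoiding circularity, since $\mu$ is now already in hand. Finally, for $(2)$ I would present $N\Delta\mathfrak{G}$ as the diagonal of a bisimplicial set whose bisimplices separate, through the canonical decomposition, the nerve of $\Delta$ in one direction from the two-sided bar construction on $\mathfrak{G}_\bullet$ in the other. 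Realising in the $\Delta$-direction collapses that factor because $|N\Delta|\simeq\ast$, while realising in the $\mathfrak{G}$-direction produces the realisation of the bar construction, namely $B|\mathfrak{G}|$; the Fubini theorem for geometric realisation of bisimplicial sets, together with the fact that the relevant skeletal comparison maps are levelwise weak equivalences, then yields $|N\Delta\mathfrak{G}|\simeq B|\mathfrak{G}|$, consistent with the classical computation $|N\Delta\mathfrak{C}|\simeq\mathbb{CP}^\infty=BSO(2)$.
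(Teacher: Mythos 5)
The paper itself offers no proof of this statement: it is imported verbatim as \cite[Theorem 5.3]{loday}, so there is nothing internal to compare you against and I can only measure your reconstruction against the argument in that reference. Your opening move is right: the canonical decomposition does make $\mathfrak{G}_\bullet$ into a simplicial set together with an index action of $\mathfrak{G}_n$ on $\{0,\dots,n\}$, exhibiting it as the regular $\Delta\mathfrak{G}$-set satisfying the twisted relations of Proposition \ref{gobject}, and your bisimplicial outline for part (2) (separating $N\Delta\simeq\ast$ from the bar construction on $\mathfrak{G}_\bullet$ and realising direction by direction) is essentially the argument used by Fiedorowicz and Loday.

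The gap is in (1) and (3), and it sits exactly where you flag ``the technical heart''. The levelwise multiplication $\mathfrak{G}_n\times\mathfrak{G}_n\to\mathfrak{G}_n$ is \emph{not} a simplicial map for the product simplicial structure: by Proposition \ref{gobject} one has $d_i(gh)=d_i(g)\,d_{g^{-1}(i)}(h)$, which differs from $d_i(g)\,d_i(h)$ whenever $g$ moves the index $i$. Consequently the recipe ``shuffle $(g,t,h,s)$ up to a top cell of $\Delta^p\times\Delta^q$, multiply in $\mathfrak{G}_{p+q}$, keep the topological coordinate'' does not descend to the quotient defining $|\mathfrak{G}\times\mathfrak{G}|$: the values assigned to adjacent shuffle prisms disagree on their common faces by exactly this index twist, and the discrepancy is not absorbed by the shuffle combinatorics alone, so the cell-by-cell verification you defer cannot close as described. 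The missing idea is that the automorphisms must also act on the \emph{topological} simplices: the canonical decomposition makes $g\in\mathfrak{G}_n$ permute the vertices of $[n]$, hence act affinely on $\Delta^n$, so that $[n]\mapsto\Delta^n$ extends from $\Delta$ to $\Delta\mathfrak{G}$; one then writes $|i^\ast X|$ as the coend $\int^{[n]\in\Delta\mathfrak{G}}X_n\times\bigl(|\mathfrak{G}|\times\Delta^n\bigr)$ of free $|\mathfrak{G}|$-cells --- precisely the left adjoint of the singular functor in Proposition \ref{realsing} --- on which the $|\mathfrak{G}|$-action is manifestly well defined, and the content of the theorem becomes the identification of this coend with the ordinary realisation. (Equivalently one uses the simplicial isomorphism $(g,x)\mapsto(g,gx)$ between the twisted and untwisted products.) With that input, (3) is immediate and (1) follows by taking $X=\mathfrak{G}$; note also that continuity of inversion needs its own remark, since $d_i(g^{-1})=\bigl(d_{g^{-1}(i)}(g)\bigr)^{-1}$ shows $g\mapsto g^{-1}$ is likewise not simplicial.
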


\begin{definition}
A \textit{planar crossed simplicial group} is a crossed simplicial group which is of type $\dC$ or $\Delta \mathfrak{D}$ in the classification theorem.
\end{definition}

Planar crossed simplicial groups are an interesting thing to study as their geometric realisations are planar Lie groups, which can be used to add extra structure to surfaces as done in \cite{surface}.

\begin{definition}
A morphism $\rho \colon \widetilde{G} \to G$ is a \textit{connective covering} if $\rho$ is a covering of its image and $\rho^{-1}(G_e)$ coincides with $\widetilde{G}_e$.  A \textit{planar Lie group} is a connective covering of $O(2)$.
\end{definition}

Planar Lie groups have a full classification, and this classification matches up exactly with the planar crossed simplicial groups.

\begin{definition}
Table \ref{cstable} gives all of the planar crossed simplicial groups along with their geometric realisations, where we consider all $M,N \in \mathbb{N}$.  The groups $\widetilde{SO(2)}$ and $\widetilde{O(2)}$ are the universal covers of $SO(2)$ and $O(2)$ respectively.

\begin{table}[ht!]
\centering
\begin{tabular}{|c|c|c|c|}
\hline
$\mathbf{\Delta \mathfrak{G}}$ & $\mathfrak{G}_n$ &$\mathbf{|\mathfrak{G}|}$ & Compact?\\ \hline
Cyclic - $\dC$ &      $\mathbb{Z}/(n+1)$                                           & $SO(2)$   & Y                                 \\ \hline
Dihedral - $\dD$&  $D_{n+1}$                                                   & $O(2)$             & Y                        \\ \hline
Paracyclic - $\dC_\infty$&    $\mathbb{Z}$                                     & $\widetilde{SO(2)}$ & N                           \\ \hline
Paradihedral - $\dD_\infty$&    $D_\infty$                                          & $\widetilde{O(2)}$  & N                           \\ \hline
$N$-Cyclic  - $\dC_N$&      $\mathbb{Z}/N(n+1)$                                      & $\text{Spin}_N(2)$ &Y                        \\ \hline
$N$-Dihedral  - $\dD_N$&       $D_{N(n+1)}$                                            & $\text{Pin}^+_N(2)$  & Y                      \\ \hline
$M$-Quaternion  - $\dQ_M$&        $Q_{M(n+1)}$                                        & $\text{Pin}^-_{2M}(2)$ & Y                     \\ \hline
\end{tabular}
\vspace{5mm}
\caption{The classification of planar crossed simplicial groups.}\label{cstable}
\end{table}
\end{definition}

The next result is one that will be crucial for the rest of the paper, and allows us to move from planar crossed simplicial group sets to topological spaces with group action.  It follows from Theorem \ref{real} with the properties of planar crossed simplicial groups taken into consideration.  A proof of the result in the cyclic case can be found at \cite[Proposition 2.8]{homotopycc}.

\begin{proposition}\label{realsing}
Let $\dG$ be a planar crossed simplicial group, then there is a \textit{realisation functor}
$$|-|_{\mathfrak{G}} \colon \dG\textbf{-Set} \to \textbf{Top}^{|\mathfrak{G}|}$$
such that the following diagram commutes:
$$\xymatrix{& \textbf{Top}^{|\mathfrak{G}|} \ar[d]^u \\
\dG\textbf{-Set} \ar[ur]^{|-|_{\mathfrak{G}}} \ar[r]_{|i^\ast -|} & \textbf{Top}}$$
Where $|i^\ast-|$ is the realisation functor on the underlying simplicial set, and $u$ is the forgetful functor.  This functor admits a right adjoint called the \textit{singular functor} $S_{\mathfrak{G}}$ defined as
$$S_{\mathfrak{G}}(X)_n = \text{Hom}_{\textbf{Top}^{|\mathfrak{G}|}}(|\mathfrak{G}| \times \Delta_n,X)$$
\end{proposition}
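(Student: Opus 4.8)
The plan is to produce the realisation functor directly from Theorem \ref{real}(3), observe that it is cocontinuous, and then obtain the adjunction by recognising it as a Yoneda (left Kan) extension, whose right adjoint is automatically of nerve type. I would first define $|-|_{\mathfrak{G}}$ on objects by declaring $|X|_{\mathfrak{G}}$ to be the space $|i^\ast X|$ equipped with the natural $|\mathfrak{G}|$-action supplied by Theorem \ref{real}(3). A $\dG$-map $f\colon X\to Y$ restricts to a simplicial map $i^\ast f$, and naturality of the action in Theorem \ref{real}(3) shows that $|i^\ast f|$ is $|\mathfrak{G}|$-equivariant, which gives functoriality. Commutativity of the triangle is then immediate, since by construction the underlying space $u|X|_{\mathfrak{G}}$ is literally $|i^\ast X|$.

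Next I would verify that $|-|_{\mathfrak{G}}$ preserves all colimits. The forgetful functor $u$ creates colimits (colimits of $|\mathfrak{G}|$-spaces are formed on underlying spaces with the induced action), the restriction $i^\ast$ preserves colimits because it is itself a left adjoint (being restriction along $i$, it sits in a triple of adjoints $i_!\dashv i^\ast\dashv i_\ast$), and ordinary geometric realisation $|-|\colon\textbf{sSet}\to\textbf{Top}$ is cocontinuous. Combining these, the composite $|i^\ast-|=u\circ|-|_{\mathfrak{G}}$ is cocontinuous, and since $u$ creates and hence reflects colimits it follows that $|-|_{\mathfrak{G}}$ is itself cocontinuous.

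With cocontinuity in hand I would invoke the nerve--realisation paradigm. Write $Y\colon\dG\to\dG\textbf{-Set}$ for the Yoneda embedding and set $F:=|-|_{\mathfrak{G}}\circ Y$, so $F([n])=|\dG(-,[n])|_{\mathfrak{G}}$. For any functor into a cocomplete category the Yoneda extension $\mathrm{Lan}_Y F$ is left adjoint to the nerve $N_F$ given by $N_F(Z)_n=\mathrm{Hom}_{\textbf{Top}^{|\mathfrak{G}|}}(F([n]),Z)$, and this nerve lands in $\dG\textbf{-Set}$ automatically because $F$ is functorial on all of $\dG$, so the full $\mathfrak{G}_n$-actions and the crossed relations of Proposition \ref{gobject} are inherited, not merely the simplicial structure. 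Since $|-|_{\mathfrak{G}}$ is cocontinuous and agrees with $F$ on representables, the density theorem identifies $|-|_{\mathfrak{G}}$ with $\mathrm{Lan}_Y F$; hence $|-|_{\mathfrak{G}}\dashv N_F$.

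The last, and hardest, step is the explicit computation $F([n])\cong|\mathfrak{G}|\times\Delta_n$ as $|\mathfrak{G}|$-spaces, which turns $N_F$ into the stated $S_{\mathfrak{G}}$. The canonical decomposition gives a bijection $\dG([m],[n])\cong\mathfrak{G}_m\times\Delta([m],[n])$, identifying the underlying simplicial set $i^\ast\dG(-,[n])$ with $\mathfrak{G}\times\Delta^n$; passing to realisations in compactly generated spaces, where $|-|$ preserves finite products, yields $|\mathfrak{G}|\times\Delta_n$. The genuine obstacle is checking that the $|\mathfrak{G}|$-action coming from Theorem \ref{real}(3) corresponds to left translation on the $|\mathfrak{G}|$-factor, equivalently that each representable realises to a free $|\mathfrak{G}|$-space of the expected form; this requires unwinding the crossed-simplicial twisting encoded in the face and degeneracy relations of Proposition \ref{gobject}. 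Once this identification is secured we obtain $N_F(Z)_n=\mathrm{Hom}_{\textbf{Top}^{|\mathfrak{G}|}}(|\mathfrak{G}|\times\Delta_n,Z)=S_{\mathfrak{G}}(Z)_n$, completing the proof.
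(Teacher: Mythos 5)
Your overall strategy---define $|-|_{\mathfrak{G}}$ on underlying realisations via Theorem \ref{real}(3), establish cocontinuity through the forgetful functor, and then read off the right adjoint from the nerve--realisation paradigm once the representables are identified---is exactly the route taken in the cyclic case by Dwyer--Hopkins--Kan, which is all the paper itself offers by way of proof (it simply points to \cite[Proposition 2.8]{homotopycc} and to Theorem \ref{real}). The formal part of your argument is sound: $u$ creates colimits, $i^\ast$ is a left adjoint so preserves them, hence $|i^\ast-|=u\circ|-|_{\mathfrak{G}}$ is cocontinuous and creation forces $|-|_{\mathfrak{G}}$ to be cocontinuous; the density theorem then yields $|-|_{\mathfrak{G}}\dashv N_F$ with $N_F(Z)_n=\mathrm{Hom}_{\textbf{Top}^{|\mathfrak{G}|}}\bigl(|\dG[n]|_{\mathfrak{G}},Z\bigr)$.

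The gap is in the last step, and it is larger than you present it. The canonical decomposition gives only a \emph{levelwise} bijection $\dG([m],[n])\cong\mathfrak{G}_m\times\Delta([m],[n])$; under this bijection precomposition by $\psi\in\Delta([k],[m])$ sends $(g,\phi)$ to a pair whose first coordinate is the crossed action of $\psi$ on $g$ and whose second coordinate is $\phi$ composed with the $g$-twisted image of $\psi$, so the simplicial operators mix the two factors and $i^\ast\dG[n]$ is \emph{not} the product simplicial set $\mathfrak{G}\times\Delta[n]$ on the nose. Thus your sentence asserting that the bijection ``identifies the underlying simplicial set with $\mathfrak{G}\times\Delta^n$'' already conceals a nontrivial claim, in addition to the action-is-left-translation claim you do flag. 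Together, the equivariant homeomorphism $|\dG[n]|_{\mathfrak{G}}\cong|\mathfrak{G}|\times\Delta_n$ is the \emph{entire} non-formal content of the proposition, and your proof ends with ``once this identification is secured'' rather than securing it. This is an honest and clearly localised omission---and it is precisely what the paper outsources to \cite[Proposition 2.8]{homotopycc} and \cite[Theorem 5.3]{loday}---but as a standalone argument it is incomplete without either carrying out that computation or citing it.
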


For the rest of this paper we will only be interested in the planar crossed simplicial groups which have a compact group as its realisation (the reason being that equivariant homotopy theory is concerned with actions of compact Lie groups). 

\section{Three Quillen Model Structures for Compact Planar Crossed Simplicial Group Sets}\label{models}

In this section we will be interested in model structures in the sense of Quillen \cite{quillen}. We develop three  model structures for each compact planar crossed simplicial group.  These models arise from the following theorem which is one of the basis for equivariant homotopy theory (\cite{elmendorf, equivariantmay}):

\begin{theorem}[{\cite[Appendix A, Proposition 2.10]{MR1629858}}]
Let $G$ be a compact Lie group, and $\textbf{Top}^G$ the category of spaces with $G$ action.  Let $\mathscr{F}$ be any family of closed subsets of $G$, then there is a model structure on $\textbf{Top}^G$ in which a map $f \colon X \to Y$ is a:
\begin{enumerate}
\item Weak equivalence if the induced maps $f^H \colon X^H \to Y^H$ are weak equivalences for all $H \in \mathscr{F}$.
\item Fibration if the induced maps $f^H \colon X^H \to Y^H$ are fibrations for all $H \in \mathscr{F}$.
\item Cofibration if it has the left-lifting property with respect to trivial fibrations.
\end{enumerate}
\end{theorem}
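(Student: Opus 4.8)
The plan is to realise the stated model structure as a cofibrantly generated one, right-induced from the Quillen model structure on $\textbf{Top}$ along the family of fixed-point functors. Recall that $\textbf{Top}$ is cofibrantly generated with generating cofibrations $I = \{S^{n-1} \hookrightarrow D^n\}$ and generating trivial cofibrations $J = \{D^n \times \{0\} \hookrightarrow D^n \times [0,1]\}$. For each closed subgroup $H$ appearing in $\mathscr{F}$, the functor $G/H \times (-) \colon \textbf{Top} \to \textbf{Top}^G$ is left adjoint to the fixed-point functor $(-)^H$, since a $G$-map $G/H \times K \to X$ is the same as a map $K \to X^H$. I would then propose as generating (trivial) cofibrations for $\textbf{Top}^G$ the sets $I_{\mathscr{F}} = \{ G/H \times i : i \in I,\ H \in \mathscr{F}\}$ and $J_{\mathscr{F}} = \{ G/H \times j : j \in J,\ H \in \mathscr{F}\}$, and declare the weak equivalences to be exactly the maps $f$ with each $f^H$ (for $H \in \mathscr{F}$) a weak homotopy equivalence.

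The bulk of the verification is an application of Kan's recognition theorem for cofibrantly generated model categories (equivalently, the right-transfer theorem). First, $\textbf{Top}^G$ is complete and cocomplete, with (co)limits created in $\textbf{Top}$, and the class of $\mathscr{F}$-equivalences visibly satisfies two-out-of-three and is closed under retracts, these properties being inherited objectwise from $\textbf{Top}$ through the functors $(-)^H$. Next, by the adjunction above a map $f$ lies in $J_{\mathscr{F}}\text{-inj}$ precisely when each $f^H$ is a Serre fibration, and in $I_{\mathscr{F}}\text{-inj}$ precisely when each $f^H$ is a trivial Serre fibration; this identifies the fibrations and trivial fibrations as in the statement, and since a Serre fibration that is a weak equivalence is a trivial fibration, the compatibility $W \cap J_{\mathscr{F}}\text{-inj} \subseteq I_{\mathscr{F}}\text{-inj}$ holds on fixed points and hence in $\textbf{Top}^G$. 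The small object argument applies because $G$ is compact, so each $G/H$ is a compact space and the domains $G/H \times S^{n-1}$ and $G/H \times (D^n \times \{0\})$ are compact, hence small relative to the closed inclusions built from $I_{\mathscr{F}}$ and $J_{\mathscr{F}}$.

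The one genuinely delicate point --- and the step I expect to be the main obstacle --- is the acyclicity condition: every relative $J_{\mathscr{F}}$-cell complex must be an $\mathscr{F}$-equivalence. The idea is to evaluate on $K$-fixed points for $K \in \mathscr{F}$ and reduce to $\textbf{Top}$. Each generator $G/H \times j$ becomes $(G/H)^K \times j$ after applying $(-)^K$, using $(G/H \times Y)^K \cong (G/H)^K \times Y$; this is a coproduct of copies of the trivial cofibration $j$, hence itself a trivial cofibration in $\textbf{Top}$. The crux is that $(-)^K$ commutes with the pushouts and transfinite compositions used to build a relative $J_{\mathscr{F}}$-cell complex: this fails for arbitrary colimits, but holds here because the maps involved are closed $G$-equivariant inclusions and $G$ is compact, so fixed points preserve these particular pushout and filtered colimits. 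Granting this, the $K$-fixed points of a relative $J_{\mathscr{F}}$-cell complex form a relative $J$-cell complex in $\textbf{Top}$, which is a trivial cofibration and in particular a weak equivalence; as this holds for every $K \in \mathscr{F}$, the original map is an $\mathscr{F}$-equivalence. With acyclicity established, Kan's theorem delivers the cofibrantly generated model structure, whose cofibrations are forced to be exactly the maps with the left-lifting property against trivial fibrations, matching condition (3).
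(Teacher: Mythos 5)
The paper does not prove this statement: it is quoted verbatim as a known result with a citation to \cite{MR1629858}, so there is no in-paper argument to compare against. Your proposal is the standard right-transfer proof of that cited result (generating sets $\{G/H\times i\}$, the adjunction $G/H\times(-)\dashv(-)^H$ identifying fibrations and trivial fibrations, smallness from compactness, and acyclicity via the fact that $(-)^K$ preserves pushouts along closed inclusions and the relevant transfinite compositions), and it is essentially correct; it is in the same spirit as the machinery the paper does reproduce later (e.g.\ the use of Lemma \ref{commute} that fixed points commute with pushouts along closed inclusions). One small imprecision: $(G/H\times j)^K\cong (G/H)^K\times j$ is not a coproduct of copies of $j$ unless $(G/H)^K$ is discrete --- it can be a positive-dimensional manifold --- so you should instead argue that $A\times j$ for any space $A$ is an acyclic (Hurewicz) cofibration, a class closed under pushout and transfinite composition, rather than claiming the fixed points form a literal relative $J$-cell complex; with that adjustment the acyclicity step goes through.
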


We will be interested in developing model structures on planar crossed simplicial group sets which are Quillen equivalent to the above model, where we allow $\mathscr{F}$ to be one of the three following families:

\begin{itemize}
\item $\mathscr{F} = \emptyset$ denoted $\textbf{Top}^{> G}$ called the \textit{weak model}.
\item $\mathscr{F} = \{\text{all closed proper subgroups of } G\}$ denoted $\textbf{Top}^{< G}$ called the \textit{strong model}.
\item $\mathscr{F} = \{\text{all closed subgroups of } G\}$ denoted $\textbf{Top}^{\leq G}$ called the \textit{coupled model} (for reasons explained later).
\end{itemize}

These model structures are known for $\dC$, $\dD$ and $\dQ$, the results of this section are showing how simply we can lift these models to the $N$-fold cases.

\begin{remark}
There is also a collection of intermediate model structures which occur between the weak and strong model structures, however we will not consider them here as they do not add to the theory.
\end{remark}

\subsection{The Weak Model}

The first model that we will look at is the weak model as introduced by Dwyer, Hopkins and Kan \cite{homotopycc}. Note that this model structure works for any crossed simplicial group, not just compact planar ones. 

\begin{proposition}[{\cite[Theorem 6.2]{homotopycc}}]
The category $\dG\textbf{-Set}$ has a cofibrantly generated model structure where a map $f: X \to Y$ is a:
\begin{enumerate}
\item Weak equivalence if $i^\ast \left(f\right) \colon i^\ast \left(X\right)\to i^\ast \left(Y\right)$ is a weak equivalence in $\textbf{sSet}_\text{Quillen}$.
\item Fibration if $i^\ast \left(f\right) \colon i^\ast \left(X\right)\to i^\ast \left(Y\right)$ is a fibration in $\textbf{sSet}_\text{Quillen}$.
\item Cofibration if it has the left-lifting property with respect to trivial fibrations.
\end{enumerate}
The generating cofibrations are given by $\{\partial  \dG[n] \hookrightarrow \dG[n]\}$ (i.e., boundary inclusions defined in the obvious way).  We will call this model the \textit{weak model} and will denote it $\dG\textbf{-Set}^w$.
\end{proposition}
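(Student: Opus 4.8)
The plan is to produce this structure by right transfer (Kan's lifting theorem for cofibrantly generated model categories) along the adjunction $i_! \dashv i^*$, in which $i^* \colon \dG\textbf{-Set} \to \textbf{sSet}$ is restriction along the embedding $i \colon \Delta \hookrightarrow \dG$ and $i_!$ is its left adjoint, given by left Kan extension. The source category $\textbf{sSet}_{\text{Quillen}}$ is cofibrantly generated with generating cofibrations $I = \{\partial\Delta[n] \hookrightarrow \Delta[n]\}$ and generating trivial cofibrations $J = \{\Lambda^k[n] \hookrightarrow \Delta[n]\}$, and $\dG\textbf{-Set}$ is a presheaf category, hence bicomplete. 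Since left Kan extension sends a representable to the representable at its image, I first note that $i_!\Delta[n] \cong \dG[n]$ and $i_!\partial\Delta[n] \cong \partial\dG[n]$, so that $i_!I$ is precisely the family of boundary inclusions in the statement. The transfer theorem will then provide a cofibrantly generated model structure on $\dG\textbf{-Set}$ with generating (trivial) cofibrations $i_!I$ (respectively $i_!J$) and with weak equivalences and fibrations created by $i^*$, once two hypotheses are verified.

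The first hypothesis, that $i_!I$ and $i_!J$ permit the small object argument, is immediate: $\dG\textbf{-Set}$ is locally presentable, so every object — in particular every domain of a map in $i_!I$ or $i_!J$ — is small. The second, and essential, hypothesis is the acyclicity condition: $i^*$ must send every relative $i_!J$-cell complex to a weak equivalence. Here I use that $i^*$, having both adjoints $i_!$ and $i_*$, preserves all colimits and in particular commutes with the pushouts and transfinite compositions out of which relative $i_!J$-cell complexes are assembled. Since trivial cofibrations in $\textbf{sSet}$ are themselves closed under pushout and transfinite composition, it is enough to prove that each individual map $i^*(i_!\Lambda^k[n] \to i_!\Delta[n])$ is a trivial cofibration.

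For this I would analyse $i^* i_! K$. As $i^* i_!$ preserves colimits and the embedding $i$ is bijective on objects, the canonical decomposition identifies the set of $m$-simplices of $i^* i_! K$ with $\mathfrak{G}_m \times K_m$; in particular $i^*(i_!\Lambda^k[n] \to i_!\Delta[n])$ is a levelwise injection and hence a cofibration. It is crucial to observe that this is \emph{not} the cartesian product $\mathfrak{G} \times K$ at the simplicial level, because the $\mathfrak{G}$-action twists the face and degeneracy operators, so the weak equivalence cannot be seen combinatorially. Instead I would pass to topological realisation: by Theorem \ref{real} together with Proposition \ref{realsing}, the functor $|i^* i_!(-)| = u\,|i_!(-)|_{\mathfrak{G}}$ preserves colimits and carries $\Delta[0]$ to $|\mathfrak{G}|$, yielding a natural homeomorphism $|i^* i_! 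K| \cong |\mathfrak{G}| \times |K|$ under which $|i^*(i_!\Lambda^k[n] \to i_!\Delta[n])|$ becomes $\mathrm{id}_{|\mathfrak{G}|} \times |\Lambda^k[n] \hookrightarrow \Delta[n]|$. The latter is the product of a deformation retract with an identity, hence a homotopy equivalence; since a map of simplicial sets is a weak equivalence exactly when its realisation is, the generating map is a weak equivalence.

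The main obstacle is precisely this acyclicity step, and within it the identification $|i^* i_! K| \cong |\mathfrak{G}| \times |K|$: the twisting of the simplicial operators by the $\mathfrak{G}$-action rules out a direct combinatorial argument, and absorbing that twist into the free translation action of $|\mathfrak{G}|$ is the one point at which genuine input from the realisation theory of crossed simplicial groups — concretely, that $|i_!\Delta[0]|_{\mathfrak{G}}$ is $|\mathfrak{G}|$ with its translation action — is required. With both hypotheses in hand, Kan's transfer theorem produces the model structure with the stated weak equivalences, fibrations, and cofibrations and with generating cofibrations $i_!I = \{\partial\dG[n] \hookrightarrow \dG[n]\}$, as claimed.
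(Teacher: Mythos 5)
The paper offers no proof of this proposition: it is quoted directly from Dwyer--Hopkins--Kan \cite[Theorem 6.2]{homotopycc} (and in this generality from Loday--Fiedorowicz), and your transfer argument along $i_!\dashv i^*$ is essentially the argument given in those sources, so your approach is the standard one and is correct in outline. The only point to tighten is the acyclicity step: a colimit-preserving functor out of $\textbf{sSet}$ is determined by its values on \emph{all} representables together with the naturality in $[n]\in\Delta$, not by its value on $\Delta[0]$ alone, so the input you need from the realisation theory is the natural homeomorphism $|i^*\dG[n]|\cong|\mathfrak{G}|\times|\Delta[n]|$ (natural in $[n]$), which is the content of the relevant lemma in Loday--Fiedorowicz accompanying Theorem \ref{real}; with that stated precisely, the identification $|i^*i_!K|\cong|\mathfrak{G}|\times|K|$ and hence the whole proof goes through as you describe.
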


\begin{proposition}
Let $\dG$ be a compact planar crossed simplicial group.  Then there is a Quillen equivalence
$$\dG\textbf{-Set}^w \rightleftarrows \textbf{Top}^{> |\mathfrak{G}|}$$
with the equivalence given using the realisation and singular functors of Proposition \ref{realsing}.
\end{proposition}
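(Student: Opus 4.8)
The plan is to reduce the statement to the classical Quillen equivalence $|-| \dashv S$ between $\textbf{sSet}_{\text{Quillen}}$ and $\textbf{Top}$, exploiting the fact that both model structures in the statement are detected by forgetful functors. Weak equivalences and fibrations in $\dG\textbf{-Set}^w$ are by definition those maps $f$ for which $i^\ast f$ is a weak equivalence, respectively a fibration, in $\textbf{sSet}_{\text{Quillen}}$. The weak model $\textbf{Top}^{>|\mathfrak{G}|}$, being the case with no nontrivial fixed-point conditions, has as its weak equivalences and fibrations exactly the maps $g$ whose underlying map $u g$ is a weak equivalence, respectively a fibration, in $\textbf{Top}$. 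It therefore suffices to show that the adjunction $|-|_{\mathfrak{G}} \dashv S_{\mathfrak{G}}$ lies over the adjunction $|-| \dashv S$ along these forgetful functors, and then to transport the classical result across.

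The key compatibility is twofold. On left adjoints, Proposition \ref{realsing} already supplies a natural isomorphism $u\,|-|_{\mathfrak{G}} \cong |i^\ast -| = |-|\,i^\ast$. On right adjoints, I would establish the dual isomorphism $i^\ast S_{\mathfrak{G}} \cong S\,u$ directly from the free--forgetful adjunction $(|\mathfrak{G}| \times -) \dashv u$ between $\textbf{Top}$ and $\textbf{Top}^{|\mathfrak{G}|}$: level-wise,
$$(i^\ast S_{\mathfrak{G}} X)_n = \text{Hom}_{\textbf{Top}^{|\mathfrak{G}|}}(|\mathfrak{G}| \times \Delta_n, X) \cong \text{Hom}_{\textbf{Top}}(\Delta_n, u X) = (S\,u X)_n,$$
naturally in $X$ and compatibly with the cosimplicial operators of $\Delta_\bullet$, so that forgetting the $\mathfrak{G}_n$-actions on $S_{\mathfrak{G}}(X)$ recovers the ordinary singular complex of the underlying space.

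With these identifications in hand, the Quillen adjunction is immediate from the right-hand side: if $g$ is a (trivial) fibration in $\textbf{Top}^{>|\mathfrak{G}|}$ then $u g$ is one in $\textbf{Top}$, whence $S(u g) \cong i^\ast S_{\mathfrak{G}}(g)$ is a (trivial) fibration in $\textbf{sSet}_{\text{Quillen}}$ because $S$ is right Quillen, and this is exactly the assertion that $S_{\mathfrak{G}}(g)$ is a (trivial) fibration in $\dG\textbf{-Set}^w$. For the equivalence, take $X$ cofibrant and $Y$ fibrant and a map $f\colon |X|_{\mathfrak{G}} \to Y$ with adjunct $\widetilde f \colon X \to S_{\mathfrak{G}}(Y)$. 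Applying $u$ together with the first isomorphism gives $u f \colon |i^\ast X| \to u Y$, and applying $i^\ast$ together with the second isomorphism gives $i^\ast \widetilde f \colon i^\ast X \to S(u Y)$; the compatibility of the two adjunctions identifies $i^\ast \widetilde f$ with the $|-| \dashv S$-adjunct of $u f$. Since every simplicial set is cofibrant and every space is fibrant, the classical Quillen equivalence shows $u f$ is a weak equivalence if and only if $i^\ast \widetilde f$ is; translating back through the forgetful detection of weak equivalences, $f$ is a weak equivalence if and only if $\widetilde f$ is, which is the Quillen equivalence criterion.

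The hard part will be the bookkeeping of the mate relationship --- checking that the adjunction isomorphism for $|-|_{\mathfrak{G}} \dashv S_{\mathfrak{G}}$ really corresponds, under $i^\ast$ and $u$, to that for $|-| \dashv S$, so that adjuncts are sent to adjuncts. Everything else is formal once the two natural isomorphisms $u\,|-|_{\mathfrak{G}} \cong |-|\,i^\ast$ and $i^\ast S_{\mathfrak{G}} \cong S\,u$ are known to be mates of one another. I would also be careful to confirm the intended reading of the weak model on $\textbf{Top}^{|\mathfrak{G}|}$, namely that it records the underlying (non-equivariant) weak equivalences and fibrations, since this is precisely what legitimises detection by $u$ and hence the reduction above.
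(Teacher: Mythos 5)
Your argument is correct, but it is organised quite differently from the paper's. The paper does not give a direct proof: it cites the cyclic case (Dwyer--Hopkins--Kan, Theorem 4.1) and asserts that, since that proof ``hinges on the properties of the realisation/singular adjunction,'' the remaining cases follow \emph{mutatis mutandis} once one checks those properties for the functors of Proposition \ref{realsing}. You instead give a single uniform argument valid for every $\dG$ at once: both model structures are created by forgetful functors ($i^\ast$ on one side, $u$ on the other), the adjunction $|-|_{\mathfrak{G}} \dashv S_{\mathfrak{G}}$ lies over $|-| \dashv S$ via the isomorphisms $u\,|-|_{\mathfrak{G}} \cong |-|\,i^\ast$ and $i^\ast S_{\mathfrak{G}} \cong S\,u$, and the Quillen equivalence criterion transports across because every simplicial set is cofibrant and every space is fibrant. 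This is precisely the content the paper's ``required properties'' gestures at, so in spirit the proofs agree, but yours is self-contained, makes explicit exactly which properties are used, and incidentally shows that neither planarity nor compactness plays any role here (consistent with the paper's own remark that the weak model exists for arbitrary crossed simplicial groups). The two points you flag as needing care are the right ones: the mate compatibility of the two isomorphisms is what guarantees that adjuncts are carried to adjuncts, and the weak model on $\textbf{Top}^{|\mathfrak{G}|}$ must be read with $\mathscr{F}$ equal to the trivial family $\{e\}$ rather than literally $\emptyset$ as the paper writes, since with the literal reading every equivariant map would be a weak equivalence and the proposition would be false. Neither point is a gap, only bookkeeping you have correctly identified.
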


\begin{proof}
The cyclic case is \cite[Theorem 4.1]{homotopycc}.  The proof hinges on the properties of the realisation/singular adjunction, and it can be checked that all of the functors in Proposition \ref{realsing} satisfy the required properties.  Therefore the other cases follows \textit{mutatis mutandis}.
\end{proof}

\subsection{The Strong Model}

We now look at the strong model structure which was introduced by Spali\'{n}ski for $\dC$ , $\dD$ and $\dQ$ (\cite{spalinskistrong, Spaliński2000557}).  We will begin by reviewing the cyclic case, extending it to the $N$-fold cyclic category.  The $N$-fold cases follow almost immediately from the $N=1$ case, and could be considered a corollary of the work of Spali\'{n}ski.  In particular, Spali\'{n}ski proved the following theorem (see \cite[\S 2]{spalinskistrong} for details on specific terminology in the assumptions).

\begin{theorem}[{\cite[Theorem 2.14]{spalinskistrong}}]\label{assumptionsformodel}
Let $\mathscr{D}$ be a category closed under coproducts, $I$ and arbitrary indexing set, and $\mathscr{H} = \{(\Psi_i,\Phi_i) \mid i \in I\}$ a family of adjoint functors $\Psi_i : \textbf{sSet} \rightleftarrows \mathscr{D} : \Phi_i$ such that:
\begin{enumerate}
\item $\mathscr{D}$ has all finite limits and arbitrary small colimits.
\item For all horn inclusions $f$, all $i,j \in I$, $\Phi_i \Psi_j(f)$ is a trivial cofibration.
\item For $X$ a simplicial horn or boundary, and $j \in I$, the object $\Psi_iX$ is $\Psi_\ast$-sequentially small with respect to all horn and boundary inclusions.
\item For all $i \in I$ the functor $\Phi_i$:
	\begin{enumerate}
		\item Preserves coproducts.
		\item Takes $\Psi_\ast$ regular pushouts to homotopy pushout diagrams.
		\item Preserves sequential colimits in $\mathscr{D}$ in which the morphisms are $\Psi_\ast$ induced from horn and boundary inclusions.
	\end{enumerate}
\end{enumerate}
Then there is a model structure on $\mathscr{D}$ in which a functor $f \colon X \to Y$ is a:
\begin{enumerate}
\item Weak equivalence if and only if for all $i \in I$, the map $\Phi_{i}(f) \colon \Phi_{i}(X) \to \Phi_{i}(Y)$ is a weak equivalence in $\textbf{sSet}_\text{Quillen}$.
\item Fibration if and only if for all $i \in I$, the map $\Phi_{i}(f) \colon \Phi_{i}(X) \to \Phi_{i}(Y)$ is a fibration in $\textbf{sSet}_\text{Quillen}$.
\item Cofibration if it has the left-lifting property with respect to trivial fibrations.
\end{enumerate}
\end{theorem}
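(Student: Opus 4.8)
The plan is to exhibit the asserted model structure as a cofibrantly generated one and to verify the hypotheses of the standard recognition theorem for cofibrantly generated model categories. As generating cofibrations I would take $I_{\mathscr{D}} = \{\Psi_i(\partial\Delta[n] \hookrightarrow \Delta[n]) \mid n \ge 0,\ i \in I\}$ and as generating trivial cofibrations $J_{\mathscr{D}} = \{\Psi_i(\Lambda^k[n] \hookrightarrow \Delta[n]) \mid 0 \le k \le n,\ i \in I\}$, that is, the images under the left adjoints of the standard generating (trivial) cofibrations of $\textbf{sSet}_\text{Quillen}$. Assumption (1) supplies the finite limits needed for the axioms and the small colimits needed to run the small object argument.

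The first substantive step is to read off the lifting classes through the adjunctions. Since $\Psi_i(f)$ has the left lifting property against $g$ exactly when $f$ has the left lifting property against $\Phi_i(g)$, a map $g$ lies in $\text{inj}(J_{\mathscr{D}})$ if and only if every $\Phi_i(g)$ is a Kan fibration, and $g$ lies in $\text{inj}(I_{\mathscr{D}})$ if and only if every $\Phi_i(g)$ is a trivial Kan fibration. Thus $\text{inj}(J_{\mathscr{D}})$ is precisely the intended class of fibrations and $\text{inj}(I_{\mathscr{D}})$ is precisely the fibrations that are weak equivalences, so that $\text{inj}(I_{\mathscr{D}}) = W \cap \text{inj}(J_{\mathscr{D}})$ holds on the nose; this is the final compatibility hypothesis of the recognition theorem. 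The class $W$ inherits the two-out-of-three property and closure under retracts from $\textbf{sSet}_\text{Quillen}$ by functoriality of the $\Phi_i$, smallness of the domains of $I_{\mathscr{D}}$ and $J_{\mathscr{D}}$ is exactly assumption (3), and $J_{\mathscr{D}} \subseteq \text{cof}(I_{\mathscr{D}})$ follows from the adjunction identification together with closure of $\text{cof}(I_{\mathscr{D}})$ under pushout and transfinite composition. Everything therefore reduces to the acyclicity condition $J_{\mathscr{D}}\text{-cell} \subseteq W$.

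This acyclicity is the crux and the step I expect to be the main obstacle. Given a relative $J_{\mathscr{D}}$-cell complex $X \to Y$, I would fix an index $i$, apply $\Phi_i$ to the transfinite cellular presentation, and argue stage by stage. At each stage the map being attached is a pushout of a coproduct of generators of $J_{\mathscr{D}}$, i.e. a $\Psi_\ast$-regular pushout; assumption (4a) moves $\Phi_i$ through the coproduct, turning the attaching map into a coproduct of the maps $\Phi_i\Psi_j(\Lambda^k[n] \hookrightarrow \Delta[n])$, each a trivial cofibration by assumption (2) and hence their coproduct a trivial cofibration in $\textbf{sSet}$. Assumption (4b) then renders the image of the pushout a homotopy pushout, so that gluing along a weak equivalence yields a weak equivalence at that stage, while assumption (4c) lets $\Phi_i$ commute with the transfinite composite; stability of weak equivalences in $\textbf{sSet}_\text{Quillen}$ under filtered colimits gives that $\Phi_i(X) \to \Phi_i(Y)$ is a weak equivalence, and since $i$ was arbitrary we conclude $X \to Y \in W$. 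The delicate point is marshalling the homotopy-pushout and filtered-colimit stability of weak equivalences through the (possibly transfinite) construction; once this is in hand, the recognition theorem assembles the cofibrantly generated model structure with the stated weak equivalences, fibrations, and cofibrations.
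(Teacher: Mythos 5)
The paper offers no proof of this statement: it is imported verbatim from Spali\'{n}ski \cite{spalinskistrong}, so there is no in-paper argument to compare against. Your outline is a faithful reconstruction of the standard transferred-model-structure argument that Spali\'{n}ski's original proof carries out --- generating (trivial) cofibrations given by the $\Psi_i$-images of the simplicial generators, identification of the (trivial) fibrations through the adjunctions, and acyclicity of the $J$-cell complexes via hypotheses (2) and (4a)--(4c) together with the fact that simplicial homotopy groups commute with the sequential colimits supplied by (4c) --- so it is essentially the same approach and the one loose point (deducing $J_{\mathscr{D}} \subseteq \mathrm{cof}(I_{\mathscr{D}})$ directly by lifting the horn inclusions against $\Phi_i$ of the $I_{\mathscr{D}}$-injectives rather than via closure properties) is easily repaired.
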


To use this theorem for model structures on $\dG \textbf{-Set}$ we need only define the family of adjoint functors.  We will only define one side of the adjunction, namely the $\Phi_i$ family, as the $\Psi_i$ are simply constructed from these in a slightly technical way.  We begin with the cyclic case.

\begin{definition}
Let $X$ be a simplicial set, define the \textit{$r$-fold edgewise subdivision}  of $X$ to be the simplicial set $\text{sd}_rX$ where $\text{sd}_rX_n = X_{r(n+1)-1}$, and the face and degeneracy maps are described in terms of the face and degeneracies of $X$ in the following way:
\begin{align*}
d_i' &= d_i \cdot d_{i+(n+1)} \cdot \cdots \cdot d_{i+(r-1)(n+1)} \colon X_{r(n+1)-1} \to X_{rn-1},\\
 s_i' &= s_{i+(r-1)(n+1)} \cdot \cdots \cdot s_{i+(n+1)} \cdot s_i \colon  X_{rn-1} \to X_{r(n+1)-1}.
\end{align*}
 \end{definition}
 
 \begin{figure}[ht!]
\centering
\begin{tikzpicture}[scale = 0.35]
\begin{scope}[thick,decoration={
    markings,
    mark=at position 0.5 with {\arrow{>}}}
    ] 
\draw[postaction={decorate}]  (5,0) --(0,0);
\draw[postaction={decorate}]  (10,0) --(5,0);
\draw[postaction={decorate}]  (2.5,4.33) -- (0,0);
\draw[postaction={decorate}]  (5,8.66) -- (2.5,4.33);
\draw[postaction={decorate}]  (5,8.66) -- (7.5,4.33);
\draw[postaction={decorate}]  (7.5,4.33) -- (10,0);

\draw[postaction={decorate}] (2.5,4.33) -- (5,0);
\draw[postaction={decorate}] (7.5,4.33) -- (5,0);
\draw[postaction={decorate}] (7.5,4.33) -- (2.5,4.33);
\end{scope}
\end{tikzpicture}
\caption{2-Fold Edgewise Subdivision of $\Delta[2]$.}
\end{figure}
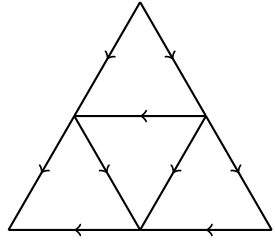

 \begin{remark}\label{cycaction}
 Let $X \in \dC_N\textbf{-Set}$ be an $N$-fold cyclic set, let $\text{sd}_{Nr}X$ denote the subdivision of the underlying simplicial set.  Then $\text{sd}_{Nr}X$ carries a natural $\mathbb{Z}/{Nr}$ action given by
\begin{align*}
\theta_n \colon \text{sd}_{Nr}X_n &\to \text{sd}_{Nr}X_n\\
x &\mapsto \tau_{Nr(n+1)-1}^{n+1}(x)
\end{align*}

 where $\tau_n \colon [n] \to [n]$ is the cyclic operator.  Therefore for every $r \geq 1$ there is a functor
 \begin{align*}
 \Phi_{Nr} \colon \dC_N\textbf{-Set} &\to \textbf{sSet}\\
 X &\mapsto (\text{sd}_{Nr}X)^{\mathbb{Z}/Nr}
 \end{align*}
 \end{remark}

\begin{proposition}\label{cycNmodel}
The category $\dC_N\textbf{-Set}$ of $N$-fold cyclic sets has a cofibrantly generated model category structure in which a map $f \colon X \to Y$ is a:
\begin{enumerate}
\item Weak equivalence if and only if for all $r \geq 1$, the map $\Phi_{Nr}(f) \colon \Phi_{Nr}(X) \to \Phi_{Nr}(Y)$ is a weak equivalence in $\textbf{sSet}_\text{Quillen}$.
\item Fibration if and only if for all $r \geq 1$, the map $\Phi_{Nr}(f) \colon \Phi_{Nr}(X) \to \Phi_{Nr}(Y)$ is a fibration in $\textbf{sSet}_\text{Quillen}$.
\item Cofibration if it has the left-lifting property with respect to trivial fibrations.
\end{enumerate}
We call the above model structure on $\dC_N\textbf{-Set}$ the \textit{strong model structure} and we will denote it by $\dC_N\textbf{-Set}^s$.  Moreover there is a Quillen equivalence $\dC_N\textbf{-Set}^s \rightleftarrows \textbf{Top}^{< Spin_N(2)}$.
\end{proposition}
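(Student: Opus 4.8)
The plan is to verify that the family of functors $\{\Phi_{Nr}\}_{r \geq 1}$ defined in Remark \ref{cycaction}, together with their left adjoints $\{\Psi_{Nr}\}_{r \geq 1}$, satisfies the four hypotheses of Spali\'{n}ski's Theorem \ref{assumptionsformodel} with $\mathscr{D} = \dC_N\textbf{-Set}$. Once this is done, the model structure and the characterisation of its weak equivalences, fibrations, and cofibrations follow immediately from that theorem. First I would note that $\dC_N\textbf{-Set}$, being a presheaf category on a small category valued in $\textbf{Set}$, is complete and cocomplete, so it has all finite limits and arbitrary small colimits, giving hypothesis (1). The existence of the left adjoint $\Psi_{Nr}$ is formal: each $\Phi_{Nr}$ preserves limits (it is built from the edgewise subdivision, which is a right adjoint on $\textbf{sSet}$, postcomposed with a fixed-point functor, which is a limit), and one can invoke the adjoint functor theorem, or construct $\Psi_{Nr}$ explicitly by left Kan extension as Spali\'{n}ski does in the $N=1$ case.

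The substantive content is hypotheses (2)--(4), and here I would argue that these are all \emph{local} statements about the composite functors $\Phi_i \Psi_j$, about smallness, and about preservation of certain (co)limits, none of which see the difference between $\mathbb{Z}/(n+1)$ and $\mathbb{Z}/N(n+1)$ in an essential way. The key observation is that the $N$-fold edgewise subdivision used to build $\Phi_{Nr}$ is literally the ordinary $Nr$-fold edgewise subdivision $\text{sd}_{Nr}$ of the underlying simplicial set, and the $\mathbb{Z}/Nr$-action arises from the $N$-fold cyclic operator exactly as in Remark \ref{cycaction}. Thus the computation of $\Phi_i\Psi_j$ on horn inclusions in hypothesis (2), the sequential-smallness estimate in hypothesis (3), and the preservation of coproducts and the transfer of regular pushouts to homotopy pushouts in hypothesis (4) are formally the same verifications that Spali\'{n}ski carries out in \cite{spalinskistrong} for $\dC$, with the index $r$ systematically replaced by $Nr$ and the cyclic group $\mathbb{Z}/(r(n+1))$ replaced by $\mathbb{Z}/(Nr(n+1))$. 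I would therefore structure the proof as a reduction: record that each hypothesis is insensitive to the value of $N$ because it only involves the interaction of edgewise subdivision with fixed points of a cyclic group action, and conclude that the verifications go through \emph{mutatis mutandis}.

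For the Quillen equivalence $\dC_N\textbf{-Set}^s \rightleftarrows \textbf{Top}^{< \text{Spin}_N(2)}$, I would use the realisation/singular adjunction of Proposition \ref{realsing}, specialised to $\dG = \dC_N$ so that $|\mathfrak{G}| = \text{Spin}_N(2)$. The strong model on the topological side is the fixed-point model of \cite{MR1629858} with $\mathscr{F}$ the family of closed \emph{proper} subgroups of $\text{Spin}_N(2)$; the crucial input is that the closed proper subgroups of $\text{Spin}_N(2)$ are precisely the finite cyclic groups $\mathbb{Z}/Nr$, and that under realisation the functor $\Phi_{Nr}$ computes exactly the $\mathbb{Z}/Nr$-fixed points of $|{-}|_{\mathfrak{G}}$. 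One then checks that the adjunction is a Quillen pair — realisation sends the generating cofibrations $\partial \dC_N[n] \hookrightarrow \dC_N[n]$ to cofibrations and the derived unit/counit are weak equivalences on each fixed-point stratum — which again follows the cyclic template of \cite[Theorem 4.1]{homotopycc} and \cite{spalinskistrong}.

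The main obstacle, and the one point requiring genuine care rather than a formal appeal, is hypothesis (4)(b): that $\Phi_{Nr}$ carries $\Psi_\ast$-regular pushouts to homotopy pushout diagrams. This is the step where Spali\'{n}ski's original argument is most delicate, since it requires analysing how fixed points of the $\mathbb{Z}/Nr$-action on the subdivision interact with pushouts along equivariant cofibrations, and in particular controlling the fixed-point subspaces so that the resulting square is homotopy-cocartesian. I expect the verification to reduce to Spali\'{n}ski's by the same substitution $r \rightsquigarrow Nr$, but I would flag explicitly that one must confirm the $\mathbb{Z}/Nr$-fixed-point analysis of the $N$-fold cyclic operator $\tau_{Nr(n+1)-1}^{n+1}$ behaves identically to the $\mathbb{Z}/r$ case — this is the sole place where the larger group $\mathbb{Z}/N(n+1)$ in the simplicial degree enters, and it is where I would concentrate the detailed checking before invoking the \emph{mutatis mutandis} principle for the remainder.
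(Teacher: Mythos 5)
Your proposal follows the same route as the paper: both reduce the existence of the model structure to Spali\'{n}ski's verification of the hypotheses of Theorem \ref{assumptionsformodel} in the ordinary cyclic case (the paper phrases this by saying the $\Phi_{Nr}$ form ``a sub-collection of the functors for $N=1$'', you phrase it as a \emph{mutatis mutandis} substitution $r \rightsquigarrow Nr$ in each hypothesis), and both obtain the Quillen equivalence by transferring \cite[Theorem 5.1]{spalinskistrong} along the realisation/singular adjunction of Proposition \ref{realsing}. Your write-up is in fact more detailed than the paper's, which disposes of the whole statement in three sentences; in particular your flagging of hypothesis (4)(b) and of the fixed-point analysis of $\tau_{Nr(n+1)-1}^{n+1}$ as the only places where $N$ genuinely enters is a useful refinement of the paper's bare appeal to shared properties.
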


\begin{proof}
The case of $N = 1$ is given in \cite[Theorem 3.10]{spalinskistrong}, where it is shown that the functors $\Phi_{r}$ are part of an adjunction which satisfy the conditions of Theorem \ref{assumptionsformodel}.  For $N > 1$, we are looking at a sub-collection of the functors for $N=1$, which therefore still have the desired properties. Therefore we have the existence of the model structure.  For the Quillen adjunction, the detailed case of $N = 1$ is given in \cite[Theorem 5.1]{spalinskistrong}, where it is shown using the cyclic realisation/singular adjunction.  As will be the norm with this paper, due to the properties shared by the cyclic and $N$-fold cyclic realisations, these conditions trivially hold for the $N$-fold case, and the result follows.
\end{proof}

We now move on to consider the dihedral and quaternionic cases.  In this case we need to be able to access the fixed point data of the reflexive part of the groups, so to do this we introduce another subdivision functor.

\begin{definition}
Let $X$ be a simplicial set, define the \textit{Segal subdivision}  of $X$ to be the simplicial set $\text{sq}X$ where $\text{sq}X_n = X_{2n+1}$, and the face and degeneracy maps are described in terms of the face and degeneracies of $X$ in the following way:
\begin{align*}
d_i' &= d_i \cdot d_{2n + 1 - i} \colon X_{2n+1} \to X_{2n-1},\\
 s_i' &= s_{2n-1} \cdot s_i \colon X_{2n-1} \to X_{2n+1}.
 \end{align*}
\end{definition} 

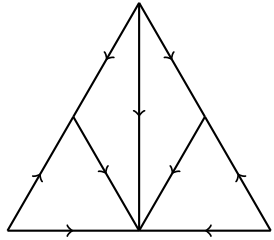
\begin{figure}[ht!]
\centering
\begin{tikzpicture}[scale = 0.35]
\begin{scope}[thick,decoration={
    markings,
    mark=at position 0.5 with {\arrow{>}}}
    ] 
\draw[postaction={decorate}]  (0,0) --(5,0);
\draw[postaction={decorate}]  (10,0) --(5,0);
\draw[postaction={decorate}]  (0,0) -- (2.5,4.33);
\draw[postaction={decorate}]  (5,8.66) -- (2.5,4.33);
\draw[postaction={decorate}]  (5,8.66) -- (7.5,4.33);
\draw[postaction={decorate}]  (10,0) -- (7.5,4.33);

\draw[postaction={decorate}] (2.5,4.33) -- (5,0);
\draw[postaction={decorate}] (7.5,4.33) -- (5,0);
\draw[postaction={decorate}] (5,8.66) -- (5,0);
\end{scope}
\end{tikzpicture}
\caption{Segal Subdivision of $\Delta[2]$.}
\end{figure}

 \begin{remark}\label{dihaction}
 If $X \in \dD_N\textbf{-Set}$ is an $N$-fold dihedral set, let $\text{sq}X$ denote the Segal subdivision of the underlying simplicial set.  Then $\text{sq}X$ carries a natural $\mathbb{Z}/2$ action given by
 \begin{align*}
 \rho_n \colon \text{sq}X &\to \text{sq}X\\
 x &\mapsto \omega_{2n+2}(x)
 \end{align*}
 where $\omega_n \colon [n] \to [n]$ is the reflexive operator. Therefore there is a functor
 \begin{align*}
 \Gamma_1 \colon \dD\textbf{-Set} &\to \textbf{sSet}\\
 X &\mapsto (\text{sq}X)^{\mathbb{Z}/2}
 \end{align*}
 \end{remark}

Using a combination of the two subdivision functors that we have introduced we can now access the fixed point data for any dihedral group.

\begin{definition}
For $X$ a simplicial set and $r \geq 1$, define the \textit{$r$-dihedral subdivision functor} $\text{sbd}_r$ to be
$$\text{sbd}_rX = \text{sq}(\text{sd}_r(X))$$
Which by Remarks \ref{cycaction} and \ref{dihaction} gives a functor
\begin{align*}
\Gamma_{Nr} \colon \dD_N\textbf{-Set} &\to \textbf{sSet}\\
X &\mapsto (\text{sbd}_rX)^{D_Nr}
 \end{align*}
\end{definition}

\begin{proposition}
The category $ \dD_N\textbf{-Set}$ has a  cofibrantly generated model structure where a map $f \colon X \to Y$ is a:
\begin{enumerate}
\item Weak equivalence if for all $r \geq 1$ the maps $\Phi_{Nr}(f) \colon \Phi_{Nr}(X) \to \Phi_{Nr}(Y), \Gamma_{Nr}(f) \colon \Gamma_{Nr}(X) \to \Gamma_{Nr}(Y)$ are weak equivalences in $\textbf{sSet}_\text{Quillen}$.
\item Fibrations if for all $r \geq 1$ the maps $\Phi_{Nr}(f) \colon \Phi_{Nr}(X) \to \Phi_{Nr}(Y), \Gamma_{Nr}(f) \colon \Gamma_{Nr}(X) \to \Gamma_{Nr}(Y)$ are fibrations in $\textbf{sSet}_\text{Quillen}$.
\item Cofibration if it has the left-lifting property with respect to the class of trivial fibrations.
\end{enumerate}
We call the above model structure on $\dD_N\textbf{-Set}$ the \textit{strong model structure} and we will denote it by $\dD_N\textbf{-Set}^s$.  Moreover there is a Quillen equivalence $\dD_N\textbf{-Set}^s \rightleftarrows \textbf{Top}^{< Pin^+_N(2)}$.
\end{proposition}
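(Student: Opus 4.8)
The plan is to follow the same two-step template used for the cyclic strong model in Proposition \ref{cycNmodel}: first produce the model structure by invoking Spali\'{n}ski's recognition theorem (Theorem \ref{assumptionsformodel}), and then establish the Quillen equivalence by transporting the $N=1$ dihedral comparison of \cite{Spaliński2000557} along the shared formal properties of the realisation/singular adjunction of Proposition \ref{realsing}.

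For the model structure, I would take the indexing set $I$ of Theorem \ref{assumptionsformodel} to be two copies of $\{r \geq 1\}$, with associated right adjoints the functors $\Phi_{Nr}$ (edgewise subdivision followed by $\mathbb{Z}/Nr$-fixed points) and $\Gamma_{Nr}$ (the $r$-dihedral subdivision $\text{sbd}_r = \text{sq}\circ\text{sd}_r$ followed by $D_{Nr}$-fixed points). Each is a composite of limit-preserving functors and so is the right adjoint of an adjunction $\Psi_i \colon \textbf{sSet} \rightleftarrows \dD_N\textbf{-Set} \colon \Phi_i$, exactly as in Spali\'{n}ski's setup, with the left adjoints $\Psi_i$ constructed from the $\Phi_i$ in the technical manner indicated in the excerpt. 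Since $\dD_N\textbf{-Set}$ is a presheaf category, condition (1) is immediate. The remaining hypotheses (2)--(4) concern only the simplicial behaviour of the subdivision-plus-fixed-point functors on horns, boundaries and $\Psi_\ast$-regular pushouts; these are verified for $N=1$ in \cite{Spaliński2000557}. For $N > 1$ the collection $\{\Phi_{Nr}, \Gamma_{Nr}\}_{r\geq 1}$ is, combinatorially, a thinned version of the $N=1$ family (subdivision indices restricted to multiples of $N$, with the fixed-point groups enlarged accordingly), so each verification carries over \textit{mutatis mutandis} and yields the model structure.

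The Quillen equivalence is where the genuine dihedral content lies. Here I would use the realisation and singular functors $|-|_{\mathfrak{G}}$ and $S_{\mathfrak{G}}$ of Proposition \ref{realsing} with $|\mathfrak{G}| = Pin^+_N(2)$, and check that the combinatorial fixed-point functors compute the topological fixed points indexed by the closed proper subgroups of $Pin^+_N(2)$: the $\Phi_{Nr}$ should recover the fixed points under the finite cyclic (spin) subgroups, while the $\Gamma_{Nr}$ recover those under the finite dihedral (pin) subgroups, with the family ranging over all $r$ sweeping out the remaining strata. The detailed comparison for $O(2)$ is carried out in \cite{Spaliński2000557} via the dihedral realisation/singular adjunction, and since the $N$-fold realisation shares all the formal properties established in Proposition \ref{realsing}, the comparison transports. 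The main obstacle I expect is hypothesis (2) of Theorem \ref{assumptionsformodel} in the presence of two interacting families: one must control the mixed terms in which an edgewise-subdivision functor $\Phi_{Nr}$ is applied to the left adjoint of a Segal-subdivision functor $\Gamma_{Ns}$ (and symmetrically), i.e.\ how the cyclic and reflexive subdivisions interact, and confirm that together the two families detect \emph{exactly} the equivalences matching the lattice of closed \emph{proper} subgroups of $Pin^+_N(2)$ --- neither over-counting the reflexive fixed-point strata nor inadvertently testing against the full group, which would instead produce the coupled rather than the strong model.
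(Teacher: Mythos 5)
Your proposal follows essentially the same route as the paper: the model structure is obtained by feeding the two families $\Phi_{Nr}$ and $\Gamma_{Nr}$ into Theorem \ref{assumptionsformodel}, citing Spali\'{n}ski's verification of the hypotheses for $N=1$ in \cite{Spaliński2000557} and observing that the $N>1$ family is a sub-collection inheriting the same properties, with the Quillen equivalence likewise transported from the $N=1$ case via the shared properties of the realisation/singular adjunction of Proposition \ref{realsing}. The additional caution you raise about the interaction of the two families under hypothesis (2) is reasonable but is already absorbed into Spali\'{n}ski's $N=1$ verification, which the paper simply cites.
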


\begin{proof}
The existence of the model for $N=1$ is given in \cite[Theorem 4.3]{Spaliński2000557}, as we already have the functors $\Phi_{r}$ satisfying the conditions of Theorem \ref{assumptionsformodel}, what is shown is that the family $\Gamma_{r}$ also satisfies these assumptions.  The Quillen equivalence for $N=1$ is given in \cite[Theorem 4.3]{Spaliński2000557}.  The case for $N>1$ follows as discussed in the proof of Proposition \ref{cycNmodel}.
\end{proof}

We complete this section with a model for $\dQ_M\textbf{-Set}$ which follows very easily from the dihedral case.  We see that the group $Q_r$ also acts on $\text{sbd}_rX$ where $X$ is a quaternionic set.  Therefore for $r \geq 1$ we define the functor
\begin{align*}
\Gamma_r^q \colon \dQ\textbf{-Set} &\to \textbf{sSet}\\
 X &\mapsto (\text{sbd}_rX)^{Q_r}
\end{align*}

\begin{lemma}[{\cite[Theorem 5.2, Theorem 5.3]{Spaliński2000557}}]
The category $ \dQ_M\textbf{-Set}$ has a cofibrantly generated model structure where a map $f \colon X \to Y$ is a:
\begin{enumerate}
\item Weak equivalence if for all $r \geq 1$ the maps $\Phi_{Mr}(f) \colon \Phi_{Mr}(X) \to \Phi_{Mr}(Y), \Gamma^q_{Mr}(f) \colon \Gamma^q_{Mr}(X) \to \Gamma^q_{Mr}(Y)$ are weak equivalences in $\textbf{sSet}_\text{Quillen}$.
\item Fibrations if for all $r \geq 1$ the maps $\Phi_{Mr}(f) \colon \Phi_{Mr}(X) \to \Phi_{Mr}(Y), \Gamma^q_{Mr}(f) \colon \Gamma^q_{Mr}(X) \to \Gamma^q_{Mr}(Y)$ are fibrations in $\textbf{sSet}_\text{Quillen}$.
\item Cofibration if it has the left-lifting property with respect to the class of trivial fibrations.
\end{enumerate}
We call the above model structure on $\dQ_M\textbf{-Set}$ the \textit{strong model structure} and we will denote it by $\dQ_M\textbf{-Set}^s$.  Moreover there is a Quillen equivalence $\dQ_M\textbf{-Set}^s \rightleftarrows \textbf{Top}^{< Pin^-_{2M}(2)}$.
\end{lemma}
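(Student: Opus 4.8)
The plan is to follow the two-step pattern already established for the cyclic and dihedral strong models. First I would treat the base case $M=1$, which is exactly the content of Spali\'{n}ski's Theorem 5.2 and Theorem 5.3, and then deduce the general case by the subcollection argument used in the proof of Proposition \ref{cycNmodel}. For the existence of the model structure I would apply Theorem \ref{assumptionsformodel} with $\mathscr{D} = \dQ_M\textbf{-Set}$ (a presheaf category, hence possessing all the finite limits and small colimits demanded by assumption (1)) and with the index family $\mathscr{H} = \{(\Psi_i, \Phi_i) \mid i \in I\}$ taken to be the pairs whose right adjoints are the functors $\Phi_{Mr}$ and $\Gamma^q_{Mr}$ for $r \geq 1$, the left adjoints $\Psi_i$ being the ones associated to these as in the cyclic and dihedral cases.

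For the base case, the cyclic functors $\Phi_r$ are already known to satisfy the four assumptions of Theorem \ref{assumptionsformodel} by Proposition \ref{cycNmodel}, so the only genuinely new input is that the quaternionic fixed-point functors $\Gamma^q_r = (\text{sbd}_r(-))^{Q_r}$ satisfy them as well. Here I would mimic the verification carried out for the dihedral functors $\Gamma_r = (\text{sbd}_r(-))^{D_r}$ in the preceding proposition, transporting those statements across the dicyclic extension $\dR \to \dQ \to \dD$: since $\text{sbd}_r$ is the single functor $\text{sq} \circ \text{sd}_r$ common to both the dihedral and quaternionic settings, the comparison reduces to understanding how passage to $Q_r$-fixed points differs from passage to $D_r$-fixed points. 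Once this is in hand for $M=1$, the step to $M>1$ is immediate and formal, exactly as in Proposition \ref{cycNmodel}: after the evident identification the family $\{\Phi_{Mr}, \Gamma^q_{Mr} \mid r \geq 1\}$ is a subcollection of the $M=1$ family $\{\Phi_{s}, \Gamma^q_{s} \mid s \geq 1\}$, and each assumption of Theorem \ref{assumptionsformodel} is a condition imposed pointwise (or pairwise, in the case of assumption (2)) on the members of a family, hence is inherited by any subcollection. This produces the model structure $\dQ_M\textbf{-Set}^s$ with the advertised weak equivalences and fibrations.

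For the Quillen equivalence $\dQ_M\textbf{-Set}^s \rightleftarrows \textbf{Top}^{< Pin^-_{2M}(2)}$ I would use the realisation and singular functors of Proposition \ref{realsing} for the planar crossed simplicial group $\dQ_M$, whose realisation is the compact planar Lie group $Pin^-_{2M}(2)$. The families $\Phi_{Mr}$ and $\Gamma^q_{Mr}$ are designed precisely so that, after realisation, their invariants compute the fixed-point spaces for the proper closed subgroups of $Pin^-_{2M}(2)$, so the derived unit and counit may be checked on these fixed points. Because the realisation/singular adjunction of Proposition \ref{realsing} is constructed uniformly for every planar crossed simplicial group and shares the formal properties that Spali\'{n}ski exploits for $M=1$, the argument goes through \emph{mutatis mutandis}, once more restricting to the subcollection when $M>1$.

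The hard part will be the one substantive verification hidden in the base case, namely assumption (2) of Theorem \ref{assumptionsformodel}: that the composites $\Phi_i \Psi_j$ send horn inclusions to trivial cofibrations when $\Phi_i$ ranges over the quaternionic functors $\Gamma^q_r$. This is where the precise subgroup lattice of the dicyclic group $Q_r$ must be used, since one has to control the interaction of $Q_r$-fixed points with the two constituent subdivisions $\text{sd}_r$ and $\text{sq}$ of $\text{sbd}_r$; the remaining three assumptions are formal preservation properties that follow as in the dihedral case. Granting Spali\'{n}ski's resolution of this point at $M=1$, the subcollection observation carries the whole statement over to arbitrary $M$.
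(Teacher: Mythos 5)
Your proposal matches the paper's approach: the paper offers no explicit proof for this lemma, instead citing Spali\'{n}ski's Theorems 5.2 and 5.3 for the base case $M=1$ and relying on the same subcollection-of-functors argument spelled out in the proof of Proposition \ref{cycNmodel} (and reused for the dihedral case) to pass to $M>1$, with the Quillen equivalence transported through the realisation/singular adjunction of Proposition \ref{realsing} exactly as you describe. Your identification of assumption (2) of Theorem \ref{assumptionsformodel} for the functors $\Gamma^q_r$ as the substantive content delegated to Spali\'{n}ski is an accurate reading of where the real work lives.
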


\subsection{The Coupled Model}

We now move on to the last collection of model categories for $\dG \textbf{-Set}$.  In this case we will develop discrete models which respect all closed subgroups of $G$, not just the proper subgroups.  Note that  the action, for example, of $SO(2)$ on any cyclic set is discrete. In \cite{MR1295162} the fixed point set is given as
$$\text{Fix} = \{ x \in X_0 \mid s_0x = s_1x = t_1s_0x\}.$$
Therefore to capture this extra data, we must construct a new category which extends $\dG\textbf{-Set}$ in such a way we can keep track of the infinite group action.

\begin{definition}
Let $\mathscr{C}$ and $\mathscr{D}$ be categories, and $F \colon \mathscr{C} \to \mathscr{D}$ a functor.  Then the category $\mathscr{C}_F\mathscr{D}$ has:
\begin{enumerate}
\item Objects given by triples $\left(A,B,FA \to B\right)$ with $A \in \mathscr{C}$ and $B \in \mathscr{D}$.
\item Morphisms specified by maps $f_1 \colon A \to A'$ and $f_2 \colon B \to B'$ such that the following diagram commutes:
$$\xymatrix{FA \ar[r] \ar[d]_{Ff_1} & B \ar[d]^{f_2} \\
FA' \ar[r] & B'}$$
\end{enumerate}

We will call such a category \emph{coupled}.
\end{definition}

\begin{remark}
The above category construction can be seen as the comma category $\left(f/1_D\right)$ \cite{lane1998categories}.  Therefore it can also be seen as a presentation of the lax limit of $F \colon A \to B$ in $\mathfrak{Cat}$ \cite{laxlimit}.
\end{remark}

\begin{definition}
Let $\mathscr{C}$ and $\mathscr{D}$ be model categories.  A functor $F \colon \mathscr{C} \to \mathscr{D}$ is \textit{Reedy admissible} if $F$ preserves colimits and $F$ has the property that given a morphism $\left(A,B,FA \to B\right) \to \left(A',B',FA' \to B'\right)$ in $\mathscr{C}_F\mathscr{D}$ such that $A' \to A$ is a trivial cofibration in $\mathscr{C}$ and $FA' \bigcup_{FA} B \to B'$ is a trivial cofibration in $\mathscr{C}$ then $B \to B'$ is a weak equivalence in $\mathscr{D}$.  In particular, left Quillen functors are Reedy admissible.
\end{definition}

\begin{theorem}[{\cite[Theorem 1.1]{blumberg}}]\label{coupledmodeltheorem}
Let $\mathscr{C}$ and $\mathscr{D}$ be model categories and $F \colon \mathscr{C} \to \mathscr{D}$ a Reedy admissible functor.  Then $\mathscr{C}_F\mathscr{D}$ admits a model structure where a map $\left(A,B,FA \to B\right) \to \left(A',B',FA' \to B'\right)$ in $\mathscr{C}_F\mathscr{D}$ is a:
\begin{enumerate}
\item Weak equivalence if $A \to A'$ is a weak equivalence in $\mathscr{C}$ and $B \to B'$ is a weak equivalence in $\mathscr{D}$.
\item Fibration if $A \to A'$ is  a fibration in $\mathscr{C}$ and $B \to B'$ is a fibration in $\mathscr{D}$.
\item Cofibration if $A \to A'$ is a cofibration in $\mathscr{C}$ and $FA' \bigcup_{FA}B \to B'$ is a cofibration in $\mathscr{D}$.
\end{enumerate}
\end{theorem}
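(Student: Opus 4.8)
\section*{Proof proposal}

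The plan is to verify the model category axioms directly, regarding $\mathscr{C}_F\mathscr{D}$ as a relative Reedy construction over the two-object indexing category $0 \to 1$, with $\mathscr{C}$ sitting over $0$, $\mathscr{D}$ over $1$, and $FA' \bigcup_{FA} B$ playing the role of the relative latching object at the vertex $1$.

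First I would check that $\mathscr{C}_F\mathscr{D}$ is bicomplete. Limits are formed componentwise, the structure map being the composite $F(\lim A_i) \to \lim FA_i \to \lim B_i$ supplied by the canonical comparison map; colimits are also formed componentwise, but here the identification $F(\mathrm{colim}\, A_i) = \mathrm{colim}\, FA_i$, and hence the existence of the structure map to $\mathrm{colim}\, B_i$, uses exactly the colimit-preservation clause of Reedy admissibility. The two-out-of-three property and closure of weak equivalences and of fibrations under retracts are immediate from the componentwise definitions, and closure of cofibrations under retracts follows from the standard fact that the relative latching map of a retract is a retract of the relative latching map. The one characterization I would record at the outset is that a map is simultaneously a fibration and a weak equivalence precisely when both $A\to A'$ and $B\to B'$ are trivial fibrations; this is immediate and requires no hypothesis on $F$.

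Next I would build the two factorizations. Given $(A,B)\to(A',B')$, one factors $A\to A'$ in $\mathscr{C}$ and then factors the induced map $F\tilde A \bigcup_{FA} B \to B'$ in $\mathscr{D}$, taking $(\tilde A,\tilde B, F\tilde A \to \tilde B)$ as the intermediate object with structure map $F\tilde A \to F\tilde A\bigcup_{FA}B \to \tilde B$. A (cofibration, trivial fibration) factorization on each side produces a cofibration followed by a map that is a trivial fibration in both components, that is, a trivial fibration in $\mathscr{C}_F\mathscr{D}$. A (trivial cofibration, fibration) factorization on each side produces a fibration in $\mathscr{C}_F\mathscr{D}$ together with a left factor $\gamma$ whose $\mathscr{C}$-component is a trivial cofibration and whose relative latching map is a trivial cofibration; that such a $\gamma$ is a weak equivalence, the only point that is not purely formal, is exactly the content of Reedy admissibility, which forces $B\to\tilde B$ to be a weak equivalence. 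The lifting of cofibrations against trivial fibrations is then direct: lift on the $\mathscr{C}$-side, use the resulting map together with a structure map $FC\to D$ to assemble a map out of the relative latching object $FA'\bigcup_{FA}B$, and lift against the cofibration $FA'\bigcup_{FA}B\to B'$ on the $\mathscr{D}$-side.

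The main obstacle is the remaining lifting axiom, namely that trivial cofibrations have the left lifting property against fibrations. A naive componentwise attempt would require the relative latching map $FA'\bigcup_{FA}B\to B'$ of a trivial cofibration to itself be a weak equivalence, and the obvious route to this, namely that $FA\to FA'$ be a weak equivalence whose pushout $B\to FA'\bigcup_{FA}B$ is therefore a weak equivalence, is unavailable, since $F$ is only assumed Reedy admissible and need not send trivial cofibrations to weak equivalences. I would instead verify the lifting property directly for the special left factors $\gamma$ produced above, those whose $\mathscr{C}$-component and relative latching map are both trivial cofibrations, where both pieces of lifting data are at hand, and then promote this to all trivial cofibrations by the retract argument: factor a trivial cofibration $\alpha = \beta\gamma$ with $\beta$ a fibration, observe that $\beta$ is then a trivial fibration by two-out-of-three, use the already established lifting of the cofibration $\alpha$ against $\beta$ to exhibit $\alpha$ as a retract of $\gamma$, and conclude, since the left lifting property is closed under retracts.
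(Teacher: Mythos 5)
The paper does not actually prove this theorem; it is imported verbatim from Blumberg's paper (cited there as Theorem 1.1), so there is no in-text proof to compare against. Your argument is correct and is essentially the standard proof, which is also Blumberg's: a direct verification of the axioms in the Reedy style over the arrow $0 \to 1$, with Reedy admissibility invoked at exactly the one non-formal point (showing the left factor of the (trivial cofibration, fibration) factorization is a weak equivalence) and the retract argument used to promote the lifting property from those special left factors to arbitrary trivial cofibrations, whose relative latching maps need not be weak equivalences.
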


\begin{lemma}
If $A$, $B$ are cofibrantly generated model categories and $F: A \to B$ is Reedy admissible, then the category $A_FB$ is cofibrantly generated.
\end{lemma}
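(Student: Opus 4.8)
The plan is to exhibit explicit generating sets and then verify the four defining clauses of a cofibrantly generated model category, namely that the fibrations and the trivial fibrations are each characterised as the maps with the right lifting property against a set whose domains are small. Write $(I_A,J_A)$ and $(I_B,J_B)$ for the generating cofibrations and generating trivial cofibrations of $A$ and $B$, and let $\varnothing_A$, $\varnothing_B$ denote the initial objects; since $F$ preserves colimits (part of Reedy admissibility) it preserves the initial object, so $F\varnothing_A = \varnothing_B$. The first step is to construct two adjunctions relating $A_FB$ to its factors. Let $U_A \colon A_FB \to A$ and $U_B \colon A_FB \to B$ be the projections sending $(X,Y,FX \to Y)$ to $X$ and to $Y$. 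Each has a left adjoint, given by $L_A(X) = (X, FX, \mathrm{id}_{FX})$ and $L_B(Y) = (\varnothing_A, Y, \varnothing_B \to Y)$: a direct inspection of the commuting-square condition defining morphisms in $A_FB$ shows that a map out of $L_A(X)$ is freely determined by its $A$-component and a map out of $L_B(Y)$ freely by its $B$-component, yielding natural isomorphisms $\mathrm{Hom}_{A_FB}(L_A X, -) \cong \mathrm{Hom}_A(X, U_A-)$ and $\mathrm{Hom}_{A_FB}(L_B Y, -) \cong \mathrm{Hom}_B(Y, U_B -)$.

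Next I would set
$$I = L_A(I_A) \cup L_B(I_B), \qquad J = L_A(J_A) \cup L_B(J_B),$$
and compute the corresponding injective classes. By the two adjunctions, a map $p = (p_1,p_2)$ in $A_FB$ lifts against $L_A(I_A)$ if and only if $p_1 = U_A(p)$ lifts against $I_A$, i.e. $p_1$ is a trivial fibration in $A$; and it lifts against $L_B(I_B)$ if and only if $p_2 = U_B(p)$ is a trivial fibration in $B$. Hence $I\text{-}\mathrm{inj}$ is exactly the class of maps both of whose components are trivial fibrations. By Theorem \ref{coupledmodeltheorem} the weak equivalences and fibrations of $A_FB$ are detected componentwise, so a trivial fibration of $A_FB$ is precisely a map whose two components are trivial fibrations; thus $I\text{-}\mathrm{inj}$ is the class of trivial fibrations. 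The identical argument with $J$ in place of $I$ identifies $J\text{-}\mathrm{inj}$ with the class of fibrations.

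It remains to check the smallness hypotheses, and this is where I expect the only genuine care to be required. Because $F$ preserves colimits, the forgetful functor $A_FB \to A \times B$ creates colimits, so colimits in $A_FB$ are formed componentwise and in particular $U_A$, $U_B$ preserve all filtered colimits. Combined with the adjunction isomorphisms, this shows that $L_A X$ is small in $A_FB$ whenever $X$ is small in $A$, and likewise $L_B Y$ is small whenever $Y$ is small; one checks that the transfinite compositions of pushouts of maps in $I$ (respectively $J$) are sent by $U_A$ and $U_B$ to $I_A$- and $I_B$-cell-type complexes, so smallness of the domains of $I_A,J_A,I_B,J_B$ transports to smallness of the domains of $I$ and $J$ relative to $I\text{-}\mathrm{cell}$ and $J\text{-}\mathrm{cell}$. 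Together with the two injectivity identifications this verifies all four clauses in the definition, so $A_FB$ is cofibrantly generated with generating sets $I$ and $J$. The main obstacle is purely bookkeeping: confirming that the componentwise computation of colimits, which rests on $F$ preserving colimits, transports smallness from the factors into $A_FB$, and matching the two injectivity classes against the componentwise description of the coupled (trivial) fibrations supplied by Theorem \ref{coupledmodeltheorem}.
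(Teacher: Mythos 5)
Your argument is essentially correct and is far more substantial than the proof given in the paper, which consists only of the observation that cofibrations in $\mathscr{A}_F\mathscr{B}$ are detected by the two components together with the bare assertion that the generating cofibrations are therefore ``some $C \subset A \times B$'' --- no generating sets are exhibited, no lifting classes are computed, and smallness is not addressed. You supply exactly the missing content by the standard route: the projections $U_A, U_B$ admit left adjoints $L_A(X) = (X, FX, \mathrm{id}_{FX})$ and $L_B(Y) = (\varnothing_A, Y, \varnothing_B \to Y)$ (the latter using that Reedy admissibility includes colimit preservation, so $F\varnothing_A \cong \varnothing_B$), the candidate generating sets are $L_A(I_A) \cup L_B(I_B)$ and $L_A(J_A) \cup L_B(J_B)$, and the adjunction isomorphisms identify the injectivity classes with the componentwise (trivial) fibrations of Theorem \ref{coupledmodeltheorem}. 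This is the argument one actually needs, and it is what Blumberg's treatment of the cyclic case implicitly relies on.

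One step of your smallness discussion is imprecise: you claim that $U_A$ and $U_B$ carry relative $I$-cell complexes to relative $I_A$- and $I_B$-cell complexes. For $U_A$ this is fine (pushouts of $L_A(i)$ project to pushouts of $i$, and pushouts of $L_B(j)$ project to isomorphisms), but $U_B$ applied to a pushout of $L_A(i)$ is a pushout of $F(i)$, and a Reedy admissible functor is not assumed to be left Quillen, so $F(i)$ need not be a relative $I_B$-cell complex. The conclusion still holds whenever the domains of $I_B$ and $J_B$ are small relative to all of $\mathscr{B}$ --- which is automatic in every instance used in this paper, since $\textbf{sSet}$, $\dG\textbf{-Set}$ and the relevant presheaf categories are locally presentable --- but for the lemma as stated in full generality you should either add that hypothesis or argue smallness of $L_BY$ against the class $U_B(I\text{-cell})$ directly rather than against $I_B\text{-cell}$.
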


\begin{proof}
We know that a map in $\mathscr{A}_F\mathscr{B}$ is a cofibration if and only if $A \to A'$ is a cofibration in $\mathscr{A}$ and $FA' \bigcup_{FA}B \to B'$ is a cofibration in $\mathscr{B}$.  We also know that $\mathscr{A}$ and $\mathscr{B}$ have generating sets of cofibrations.  Therefore the generating cofibrations of $\mathscr{A}_F \mathscr{B}$ is some $C \subset A \times B$.
\end{proof}

We begin with the cyclic case as proved by Blumberg \cite{blumberg}, to do this we construct a functor $\nabla \colon \textbf{sSet} \to \dC \textbf{-Set}^s$ such that $|X|^H \simeq |\nabla X|^H_{\mathfrak{C}}$ for all finite $H < SO(2)$.

\begin{definition}
Let $\nabla_n = S_{\mathfrak{C}}\left(|\Delta[n]|\right)$.  Then we have that $\nabla_\ast$ is a cosimplicial cyclic set.  We then define
\begin{align*}
\nabla \colon \textbf{sSet} &\to \dC \textbf{-Set}^s\\
\nabla X &= X \otimes_{\Delta^{op}} \nabla_\ast
\end{align*}
This functor has a right adjoint $A$ given by $A\left(Y\right)_n = \text{Hom}_{\dC \textbf{-Set}}\left(\nabla_n,Y\right)$
\end{definition}

We consider the category $\mathscr{P} := \textbf{sSet}_\nabla \dC \textbf{-Set}^s$.

\begin{lemma}[{\cite[Lemma 4.2]{blumberg}}]\label{commute}
The functor $\left(-\right)^G$ on based $G$-spaces preserves pushouts of diagrams where one leg is a closed inclusion.
\end{lemma}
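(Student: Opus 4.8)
The plan is to show that the canonical comparison map is a homeomorphism. Write the pushout square as $i \colon A \hookrightarrow X$ (the closed inclusion), $f \colon A \to B$, with pushout $P = X \cup_A B$, all taken in based $G$-spaces; I work throughout in a convenient category of compactly generated weak Hausdorff spaces, so that fixed-point sets are closed subspaces and pushouts along closed inclusions are well behaved. Applying $(-)^G$ to the square produces a commuting square and hence an induced map $\phi \colon X^G \cup_{A^G} B^G \to P^G$, and the whole content of the lemma is that $\phi$ is an isomorphism. I would note at the outset that $(-)^G$ is right adjoint to the trivial-action functor $\mathrm{triv} \colon \mathbf{Top}_\ast \to \mathbf{Top}_\ast^{G}$, so it preserves all limits but has no reason to preserve the colimit $P$; it is precisely the closedness of $i$ that rescues colimit-preservation here.

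First I would establish bijectivity on underlying sets. Since $i$ is injective, the pushout decomposes set-theoretically as $P \cong B \sqcup (X \setminus iA)$, with $q_B \colon B \to P$ injective and $q_X \colon X \setminus iA \to P$ injective onto the complement of $q_B(B)$; both maps are $G$-equivariant and $iA$ is $G$-invariant. A fixed point of $P$ lying in the image of $B$ must come from a fixed point of $B$, since equivariant injectivity of $q_B$ forces $gb = b$; likewise a fixed point in the image of $X \setminus iA$ comes from a fixed point there. Using $i^{-1}(X^G) = A^G$ this yields $P^G = q_B(B^G) \sqcup q_X(X^G \setminus iA^G)$, which is exactly the underlying set of $X^G \cup_{A^G} B^G$, so $\phi$ is a continuous bijection.

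The main obstacle is upgrading this to a homeomorphism, and this is where the closed-inclusion hypothesis does the real work. The key point-set fact is that cobase change preserves closed inclusions: because $iA$ is closed in $X$, one checks $q^{-1}(q_B(B)) = B \sqcup iA$ is closed, so $q_B$ is a closed embedding and $q_X \colon X \setminus iA \to P$ is an open embedding onto the complementary open set. Thus $P$ splits as a closed piece $q_B(B)$ glued to a complementary open piece, and intersecting with the closed subspace $P^G$ exhibits it as the closed subspace $q_B(B^G)$ together with the complementary open subspace $q_X(X^G \setminus iA^G)$. I would then run the identical analysis on the fixed-point diagram: since $i \colon A^G \hookrightarrow X^G$ is again a closed inclusion, $X^G \cup_{A^G} B^G$ has the same description as a closed piece $\mathrm{im}(B^G)$ glued to a complementary open piece $\mathrm{im}(X^G \setminus iA^G)$.

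Finally I would assemble the two descriptions. The map $\phi$ carries the closed piece of $X^G \cup_{A^G} B^G$ homeomorphically onto $q_B(B^G)$ and the complementary open piece homeomorphically onto $q_X(X^G \setminus iA^G)$. A continuous bijection that restricts to homeomorphisms on a closed subspace and on its open complement is itself a homeomorphism, so $\phi$ is an isomorphism and $(-)^G$ preserves the pushout. The only genuinely delicate point is the topological matching of the two decompositions; the set-theoretic and adjunction observations are formal, and the crux — that both cobase change and the fixed-point functor send closed inclusions to closed inclusions — is exactly the hypothesis that one leg of the pushout is a closed inclusion.
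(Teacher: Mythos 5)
The paper itself gives no argument for this lemma---it is quoted directly from Blumberg---so your proposal has to stand on its own, and most of it does: the set-theoretic identification $P^G = q_B(B^G) \sqcup q_X\left(X^G \setminus i(A^G)\right)$ is correct, as are the observations that $q_B$ is a closed embedding and that $q_X$ restricts to an open embedding on $X \setminus iA$. The gap is the final gluing step. The principle you invoke---``a continuous bijection that restricts to homeomorphisms on a closed subspace and on its open complement is itself a homeomorphism''---is false. Take $Y = \{0\} \sqcup (0,1]$ with the disjoint-union topology, $Z = [0,1]$, and $f$ the evident bijection: $\{0\}$ is closed in $Y$ with open complement $(0,1]$, the images are respectively closed and open in $Z$, and both restrictions are homeomorphisms, yet $f$ is not a homeomorphism. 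Knowing the two pieces and their intrinsic topologies does not determine how neighbourhoods of points of the closed piece meet the open piece, and that is exactly what is at stake: a closed subset of $X^G \cup_{A^G} B^G$ that meets both pieces has no reason, on your argument, to have closed image in $P^G$.

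The repair is to check closedness of images directly. A closed subset $W$ of the pushout $X^G \cup_{A^G} B^G$ is given by closed sets $W_X \subseteq X^G$ and $W_B \subseteq B^G$ that are compatible over $A^G$, and $\phi(W) = F := q_X(W_X) \cup q_B(W_B)$. Since $X^G$ and $B^G$ are closed in $X$ and $B$ (here weak Hausdorffness earns its keep), $W_X$ and $W_B$ are closed in $X$ and $B$. One computes $q_B^{-1}(F) = W_B$ and $q_X^{-1}(F) = W_X \cup i\left(f^{-1}(W_B)\right)$; the latter is closed in $X$ precisely because $f^{-1}(W_B)$ is closed in $A$ and $i$ is a \emph{closed} inclusion. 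Hence $F$ is closed in $P$, so $\phi$ is a closed continuous bijection and therefore a homeomorphism. Note that this is a different, and more essential, use of the closed-inclusion hypothesis than the one you single out as the crux (stability of closed inclusions under cobase change): that gives you the decomposition of $P$ into pieces, but not the topology of the glued space.
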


\begin{lemma}[{\cite[Lemma 4.5]{blumberg}}]\label{proof:reedy}
The functor $\nabla$ is Reedy admissible.
\end{lemma}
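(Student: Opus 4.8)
The plan is to verify the two clauses in the definition of Reedy admissibility, and in fact to establish the stronger statement that $(\nabla,A)$ is a Quillen adjunction, so that the remark that left Quillen functors are Reedy admissible applies. The colimit-preservation clause is immediate: $\nabla$ is a left adjoint (with right adjoint $A$), being defined by the coend $\nabla X = X \otimes_{\Delta^{op}} \nabla_\ast$, and coends are cocontinuous in the variable $X$.

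For the second clause I would first reduce it to the preservation of trivial cofibrations. Given a morphism of $\textbf{sSet}_\nabla\dC\textbf{-Set}^s$ with $A \to A'$ a trivial cofibration in $\textbf{sSet}$ and $\nabla A' \bigcup_{\nabla A} B \to B'$ a trivial cofibration in $\dC\textbf{-Set}^s$, the map $B \to B'$ factors as
$$B \longrightarrow \nabla A' \bigcup\nolimits_{\nabla A} B \longrightarrow B',$$
where the second map is a trivial cofibration by hypothesis and the first is the pushout of $\nabla A \to \nabla A'$ along $\nabla A \to B$. Since trivial cofibrations are stable under pushout, it suffices to know that $\nabla A \to \nabla A'$ is a trivial cofibration, i.e. that $\nabla$ preserves trivial cofibrations; as weak equivalences are closed under composition, $B \to B'$ is then a weak equivalence. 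This is precisely the argument underlying ``left Quillen implies Reedy admissible,'' so it remains to prove that $\nabla$ is left Quillen.

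Since $\nabla$ is cocontinuous and $\textbf{sSet}_{\text{Quillen}}$ is cofibrantly generated, it is enough to check that $\nabla$ sends the generating cofibrations $\partial\Delta[n] \hookrightarrow \Delta[n]$ to cofibrations and the generating trivial cofibrations $\Lambda^k[n] \hookrightarrow \Delta[n]$ to trivial cofibrations in $\dC\textbf{-Set}^s$. By Proposition \ref{cycNmodel} the weak equivalences and fibrations of $\dC\textbf{-Set}^s$ are detected by the functors $\Phi_r = (\text{sd}_r(-))^{\mathbb{Z}/r}$, which compute the $\mathbb{Z}/r$-fixed points of the cyclic realisation. The crux is therefore to evaluate $\Phi_r$ on $\nabla$ of these maps. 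Here I would use that $\nabla_n = S_{\mathfrak{C}}(|\Delta[n]|)$, so that the counit of the realisation/singular adjunction of Proposition \ref{realsing} identifies $|\nabla_\ast|_{\mathfrak{C}}$ with $|\Delta[\ast]|$ up to equivariant equivalence; commuting realisation past the coend, and using Lemma \ref{commute} to move the fixed-point functor past the defining pushouts (each leg of which is a closed inclusion), reduces $\Phi_r\nabla$ applied to a horn or boundary inclusion to the ordinary topological realisation $|-|$ of that inclusion. Since $|-|\colon \textbf{sSet} \to \textbf{Top}$ is left Quillen, horn inclusions are carried to trivial cofibrations and boundary inclusions to cofibrations, as required.

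The main obstacle is the fixed-point computation in this last step: one must show that taking $\mathbb{Z}/r$-fixed points of the cyclic realisation of $\nabla$ applied to a simplicial map recovers the underlying realisation up to weak equivalence, uniformly in $r$. This is where the design of $\nabla$ through $S_{\mathfrak{C}}$ and the interaction of edgewise subdivision with the coend enter, and where Lemma \ref{commute} is indispensable, since fixed points fail to commute with arbitrary colimits but do commute with the pushouts occurring here precisely because one leg is always a closed inclusion.
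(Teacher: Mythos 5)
Your opening reduction --- factoring $B \to B'$ as $B \to \nabla A' \bigcup_{\nabla A} B \to B'$ and observing that only the first map still needs to be shown to be a weak equivalence --- is exactly the paper's first step. Where you diverge is in how you handle that first map: you propose to show that $\nabla$ is left Quillen, so that $\nabla A \to \nabla A'$ is a trivial cofibration in $\dC\textbf{-Set}^s$ and its pushout along $\nabla A \to B$ is again one. The paper does something weaker and more direct: it never establishes that $\nabla A \to \nabla A'$ is a trivial cofibration of cyclic sets. Instead it applies $|-|_{\mathfrak{C}}$ (which preserves the pushout, being a colimit), uses Lemma \ref{commute} to commute $H$-fixed points past the realised pushout (the hypothesis of that lemma holds because $A \to A'$ is an inclusion, so $|\nabla A|_{\mathfrak{C}} \to |\nabla A'|_{\mathfrak{C}}$ is a closed inclusion), and then identifies $|B|_{\mathfrak{C}}^H \to |\nabla A' \bigcup_{\nabla A} B|_{\mathfrak{C}}^H$ as a pushout, in spaces, of the trivial cofibration $|\nabla A|_{\mathfrak{C}}^H \to |\nabla A'|_{\mathfrak{C}}^H$; since strong weak equivalences are detected by $H$-fixed points of realisations for all finite $H \subset SO(2)$, this finishes the proof.

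The gap in your route is the left Quillen verification itself. Cofibrations and trivial cofibrations in $\dC\textbf{-Set}^s$ are defined by a left lifting property; they are not detected by the functors $\Phi_r$, which only detect weak equivalences and fibrations. So computing $\Phi_r\nabla$ on the generating (trivial) cofibrations of $\textbf{sSet}$ can at best show that $\nabla$ sends horn inclusions to weak equivalences --- it cannot show that the images are cofibrations, and a pushout of a map that is merely a weak equivalence need not be a weak equivalence. To run your argument you would need to verify the adjoint condition, namely that $A(Y)_n = \mathrm{Hom}_{\dC\textbf{-Set}}(\nabla_n,Y)$ sends (trivial) fibrations to (trivial) fibrations, which your sketch does not address. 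There is also a category slip in your final step: $\Phi_r\nabla(f)$ is a map of simplicial sets, so it cannot literally ``reduce to the ordinary topological realisation $|-|$'' of $f$; what the design of $\nabla$ actually provides is the equivalence $|\nabla X|_{\mathfrak{C}}^H \simeq |X|^H$ for finite $H < SO(2)$, and this is precisely the fact the paper exploits \emph{after} realising the pushout, rather than before.
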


\begin{proof}
To show this we need to show that given a map $\left(A,B, \nabla A \to B\right) \to \left(A', B', \nabla A' \to B'\right)$ in $\mathscr{P}$ such that $A \to A'$ is a trivial cofibration and $\nabla A' \bigcup_{\nabla A} B \to B'$ is a trivial cofibration, then the map $B \to B'$ is a weak equivalence.  We begin by noting that the map $B \to B'$ is the composite
$$B \to \nabla A' \bigcup_{\nabla A} B \to B'$$
and we know by assumption that the second map is a weak equivalence.  Therefore it will suffice just to show that the first map is a weak equivalence.  By taking the realisation this is equivalent to showing that
$$|B|_{\mathfrak{C}}^H \to |\nabla A' \bigcup_{\nabla A} B|^H_{\mathfrak{C}}$$
is a weak equivalence of spaces for all finite $H \subset SO(2)$.  We then use the fact that the geometric realisation is a colimit to see that
$$|\nabla A' \bigcup_{\nabla A} B|^H_{\mathfrak{C}} \simeq \left(|\nabla A'|_{\mathfrak{C}} \bigcup_{|\nabla A|_{\mathfrak{C}}} |B|_{\mathfrak{C}}\right)^H$$
Since $A \to A'$ is a cofibration of simplicial sets, it is an inclusion and $|\nabla A|_{\mathfrak{C}} \to |\nabla A'|_{\mathfrak{C}}$ is a closed inclusion. By Lemma \ref{commute} the fixed-point functor commutes with the pushout so we get
$$\left(|\nabla A'|_{\mathfrak{C}} \bigcup_{|\nabla A|_{\mathfrak{C}}} |B|_{\mathfrak{C}}\right)^H \simeq |\nabla A'|_{\mathfrak{C}}^H \bigcup_{|\nabla A|_{\mathfrak{C}}^H} |B|_{\mathfrak{C}}^H$$
Therefore we see that $|B|_{\mathfrak{C}}^H \to |\nabla A'|_{\mathfrak{C}}^H \bigcup_{|\nabla A|_{\mathfrak{C}}^H} |B|_{\mathfrak{C}}^H$ is a pushout of a trivial cofibration and therefore itself a trivial cofibration.
\end{proof}

\begin{corollary}[{\cite[Corollary 4.7]{blumberg}}]\label{coupledmodel}There exists a cofibrantly generated model structure on $\mathscr{P}$ in which a map is a:
\begin{enumerate}
\item Weak equivalence if $A \to A'$ is a weak equivalence of simplicial sets and $B \to B'$ is a strong weak equivalence of cyclic sets.
\item Fibration if $A \to A'$ is a fibration of simplicial sets and $B \to B'$ is a strong fibration of cyclic sets.
\item Cofibration if $A \to A'$ is a cofibration of simplicial sets and $\nabla A' \bigcup_{\nabla A} B \to B'$ is a strong cofibration of cyclic sets.
\end{enumerate}
\end{corollary}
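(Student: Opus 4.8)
The plan is to recognise this statement as the specialisation of the general coupled model structure theorem (Theorem \ref{coupledmodeltheorem}) to the functor $\nabla$, combined with the cofibrant generation lemma proved just above it. First I would confirm that both constituent categories carry the model structures demanded by that theorem: $\textbf{sSet}$ is equipped with the Quillen model structure, and $\dC\textbf{-Set}^s$ is the strong model structure supplied by Proposition \ref{cycNmodel} in the case $N=1$. Both are cofibrantly generated — the first classically, the second by the cited construction of Spali\'{n}ski — which will matter for the second half of the claim.

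Next I would invoke Lemma \ref{proof:reedy}, which establishes exactly that $\nabla \colon \textbf{sSet} \to \dC\textbf{-Set}^s$ is Reedy admissible. With two model categories in hand and $\nabla$ Reedy admissible, the hypotheses of Theorem \ref{coupledmodeltheorem} are met for $\mathscr{C} = \textbf{sSet}$, $\mathscr{D} = \dC\textbf{-Set}^s$ and $F = \nabla$, so the theorem produces a model structure on $\mathscr{P} = \textbf{sSet}_\nabla\dC\textbf{-Set}^s$. The three classes of morphisms it yields are precisely those in the statement once one translates ``weak equivalence (resp.\ fibration, cofibration) in $\mathscr{D}$'' into ``strong weak equivalence (resp.\ fibration, cofibration) of cyclic sets'', since these are by definition the distinguished morphisms of $\dC\textbf{-Set}^s$. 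To upgrade to cofibrant generation I would then appeal to the lemma immediately preceding this corollary, which guarantees that $A_FB$ is cofibrantly generated whenever $A$ and $B$ are and $F$ is Reedy admissible; all three inputs have just been verified.

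In this sense the result is a genuine corollary: the only nontrivial content, namely the Reedy admissibility of $\nabla$, has already been discharged in Lemma \ref{proof:reedy}, so the remaining work is purely the bookkeeping of matching the abstract data to Theorem \ref{coupledmodeltheorem}. The one point deserving care is making sure that the strong model on cyclic sets — which is presented through the fixed-point functors $\Phi_r$ rather than intrinsically — is exactly the model structure against which Reedy admissibility was checked; since Lemma \ref{proof:reedy} is phrased directly in terms of $|-|_{\mathfrak{C}}$ and the strong equivalences, this identification is automatic and no further verification is required.
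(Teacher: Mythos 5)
Your proposal matches the paper's own (implicit) argument exactly: the corollary is obtained by applying Theorem \ref{coupledmodeltheorem} to $F = \nabla$ with $\mathscr{C} = \textbf{sSet}$ and $\mathscr{D} = \dC\textbf{-Set}^s$, where Reedy admissibility is supplied by Lemma \ref{proof:reedy} and cofibrant generation by the lemma immediately preceding the corollary. No gaps; the identification of the distinguished classes of $\dC\textbf{-Set}^s$ with the ``strong'' morphisms in the statement is exactly the bookkeeping the paper intends.
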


We now must construct a bijection between our two categories of interest, $\mathscr{P}$ and $\textbf{Top}^{SO(2)}$.

\begin{definition}
The functor $L \colon \mathscr{P} \to \textbf{Top}^{SO(2)}$ takes a triple $\left(A,B, \nabla A \to B\right)$ to the pushout in the diagram
$$\xymatrix{|\nabla A|_{\mathfrak{C}} \ar[r] \ar[d]_{\zeta} & |B|_{\mathfrak{C}} \ar[d] \\
|A| \ar[r] & X}$$
where the map $\zeta \colon |\nabla A|_{\mathfrak{C}} \to |A|$ is the natural map which induces weak equivalences on passage to fixed point subspaces for all finite subgroups of $SO(2)$. A morphism $\left(A,B,\nabla A \to B\right) \to \left(A',B',\nabla A' \to B'\right)$ gives rise to a commutative diagram:
$$\xymatrix{|A| \ar[d] & |\nabla A|_{\mathfrak{C}} \ar[r] \ar[l] \ar[d]& |B|_{\mathfrak{C}} \ar[d] \\
|A'| & |\nabla A'|_{\mathfrak{C}} \ar[l] \ar[r] & |B'|_{\mathfrak{C}}}$$
\end{definition}

\begin{definition}
The functor $R \colon \textbf{Top}^{SO(2)} \to \mathscr{P}$ takes $X$ to the triple $$\left(S\left(X^{SO(2)}\right),S_\mathfrak{C}\left(X\right),\nabla S\left(X^{SO(2)}\right) \to S_\mathfrak{C}\left(X\right)\right)$$
Here the map $\nabla S\left(X^{SO(2)}\right) \to S_\mathfrak{C}\left(X\right)$ is the adjoint to the composite
$$|\nabla S\left(X^{SO(2)}\right)|_{\mathfrak{C}} \to |S\left(X^{SO(2)}\right)| \to X^{SO(2)} \hookrightarrow X$$
A map $X \to Y$ in $\textbf{Top}^{SO(2)}$ induces maps $S\left(X^{SO(2)}\right) \to S\left(Y^{SO(2)}\right)$ and $S_\mathfrak{C}\left(X\right) \to S_\mathfrak{C}\left(Y\right)$ which fit in the following commutative diagram:
$$\xymatrix{\nabla S\left(X^{SO(2)}\right) \ar[r] \ar[d] & S_\mathfrak{C}\left(X\right) \ar[d] \\
\nabla S\left(Y^{SO(2)}\right) \ar[r] & S_\mathfrak{C}\left(Y\right)}$$
\end{definition}

\begin{theorem}[{\cite[\S 6]{blumberg}}]
The functors $L$ and $R$ specify a Quillen equivalence between $\mathscr{P}$ equipped with the model structure given in Corollary \ref{coupledmodel} and $\textbf{Top}^{\leq SO(2)}$.
\end{theorem}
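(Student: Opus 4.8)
The plan is to verify the three standard ingredients of a Quillen equivalence in turn: that $(L,R)$ is an adjunction, that it is a Quillen adjunction, and that its derived unit and counit are weak equivalences. Throughout I would exploit that $L$ and $R$ are assembled out of the two realisation/singular adjunctions already available, namely the ordinary one on $\textbf{sSet}$ and the cyclic one of Proposition \ref{realsing}.

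First I would establish the adjunction directly. By the universal property of the pushout defining $L$, a morphism $L(A,B,\nabla A \to B) \to X$ is a pair of $SO(2)$-maps $|A| \to X$ and $|B|_{\mathfrak{C}} \to X$ agreeing after precomposition with the two legs out of $|\nabla A|_{\mathfrak{C}}$. Since $|A|$ carries the trivial $SO(2)$-action, the first map factors uniquely through $X^{SO(2)} \hookrightarrow X$ and so corresponds, under the ordinary realisation/singular adjunction, to a simplicial map $A \to S(X^{SO(2)})$; the second corresponds, under the cyclic adjunction of Proposition \ref{realsing}, to a cyclic map $B \to S_{\mathfrak{C}}(X)$. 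The agreement condition is precisely commutativity of the square defining a morphism $(A,B,\nabla A \to B) \to R(X)$, so these two bijections assemble into a natural isomorphism $\text{Hom}_{\textbf{Top}^{SO(2)}}(L(-),-) \cong \text{Hom}_{\mathscr{P}}(-,R(-))$.

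Next I would show that $R$ is right Quillen, which is enough. By the characterisation of (trivial) fibrations in Corollary \ref{coupledmodel} it suffices to check the two components of $R$ separately. If $f\colon X \to Y$ is a (trivial) fibration in $\textbf{Top}^{\leq SO(2)}$, then in particular $f^{SO(2)}\colon X^{SO(2)} \to Y^{SO(2)}$ is a (trivial) fibration of spaces, so $S(X^{SO(2)}) \to S(Y^{SO(2)})$ is a (trivial) fibration of simplicial sets because the ordinary singular functor is right Quillen. Moreover, since every proper closed subgroup of $SO(2)$ is a fortiori a closed subgroup, $f$ is also a (trivial) fibration in $\textbf{Top}^{< SO(2)}$; by Proposition \ref{cycNmodel} with $N=1$ the cyclic singular functor is right Quillen for the strong model, so $S_{\mathfrak{C}}(X) \to S_{\mathfrak{C}}(Y)$ is a strong (trivial) fibration. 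Hence $R(f)$ is a (trivial) fibration in $\mathscr{P}$, and $(L,R)$ is a Quillen adjunction.

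Finally, the Quillen equivalence reduces to checking that the derived counit $L R X \to X$ is a weak equivalence for fibrant $X$, the derived unit being treated dually. Here I would compute on fixed points: the map $\zeta\colon |\nabla S(X^{SO(2)})|_{\mathfrak{C}} \to |S(X^{SO(2)})|$ is a closed inclusion by construction, so by Lemma \ref{commute} the functor $(-)^H$ commutes with the pushout defining $L R X$. For a \emph{finite} $H < SO(2)$ the defining property of $\zeta$ makes $|\nabla S(X^{SO(2)})|_{\mathfrak{C}}^{H} \to |S(X^{SO(2)})|^{H}$ a weak equivalence, collapsing the pushout onto the cyclic leg $|S_{\mathfrak{C}}(X)|_{\mathfrak{C}}^{H}$, which maps to $X^{H}$ by a weak equivalence via the strong-model equivalence. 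For $H = SO(2)$ the complementary situation occurs: the pushout collapses onto the simplicial leg $|S(X^{SO(2)})|$, which maps to $X^{SO(2)}$ by a weak equivalence via the counit of the ordinary adjunction. Checking both families of subgroups then yields a weak equivalence in $\textbf{Top}^{\leq SO(2)}$. The main obstacle is exactly this $SO(2)$-fixed-point analysis: the $SO(2)$-fixed points of a cyclic realisation reduce to the discrete set $\text{Fix}$ recorded earlier and do \emph{not} recover the genuine fixed space, which is precisely the reason the coupled construction glues in the simplicial datum $A$. I would need to verify carefully that the structure map $\nabla S(X^{SO(2)}) \to S_{\mathfrak{C}}(X)$ interacts with the pushout so that at $H = SO(2)$ the homotopy type is governed entirely by $|A| \simeq X^{SO(2)}$, while at finite $H$ it is governed entirely by $|B|_{\mathfrak{C}}$; this bookkeeping, together with confirming via Lemma \ref{commute} that the relevant pushouts are homotopy pushouts, is the technical heart of the argument.
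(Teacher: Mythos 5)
The paper itself offers no proof of this statement: it is imported wholesale from \S 6 of \cite{blumberg}, so there is no internal argument to compare you against. That said, your outline is the correct strategy and is, in structure, the argument Blumberg actually runs. The adjunction does fall out of the universal property of the pushout together with the two realisation/singular adjunctions, and the observation that $|A| \to X$ factors through $X^{SO(2)}$ because $|A|$ carries the trivial action is exactly the right one. Right-Quillenness of $R$ is correctly checked componentwise against the description of (trivial) fibrations in Corollary \ref{coupledmodel}, using that a fibration in $\textbf{Top}^{\leq SO(2)}$ is in particular one in $\textbf{Top}^{< SO(2)}$.

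The gaps are concentrated where you flag them, and they are real rather than cosmetic. First, $RX$ need not be cofibrant in $\mathscr{P}$, so the derived counit is $L(QRX) \to X$ for a cofibrant replacement $Q$, not $LRX \to X$; the fixed-point computation must be carried out for the replacement, and the unit is not literally ``dual'' --- it requires its own argument on cofibrant objects of $\mathscr{P}$, where cofibrancy is described by the mixed condition of Corollary \ref{coupledmodel}. Second, your ``collapse onto one leg'' steps need the pushout defining $L$ to remain a homotopy pushout after applying $(-)^H$; Lemma \ref{commute} only gives that $(-)^H$ commutes with a pushout when one leg is a closed inclusion, so you must still check that the relevant leg is a cofibration after passage to $H$-fixed points, which is exactly where cofibrancy of the input enters. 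Third, at $H = SO(2)$ the assertion that the homotopy type is governed entirely by $|A|$ requires identifying $|B|_{\mathfrak{C}}^{SO(2)}$ and $|\nabla A|_{\mathfrak{C}}^{SO(2)}$ with the discrete sets $\text{Fix}$ and verifying that the comparison between them is a weak equivalence; you name this as the technical heart but do not supply it, and it is precisely the content of the fixed-point lemmas in the cited source. As a reconstruction of Blumberg's argument your proposal is on target; as a self-contained proof it stops short of its core.
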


For model structures of this type, we will frequently abuse notation and just denote
$$\dC\textbf{-Set}^c := \mathscr{P} := \textbf{sSet}_\nabla \dC \textbf{-Set}^s$$
and call it the \textit{coupled model} on cyclic sets.\label{index:coupled}  However we should remember that in fact $\dC\textbf{-Set}^c$ has a different underlying category than $\dC\textbf{-Set}^w$ and $\dC\textbf{-Set}^s$.

We can now consider changing the above machinery to work in  $\dC_N\textbf{-Set}$.  All of the following statements follow almost instantly from the machinery introduced by Blumberg which we reproduced above, and by using the strong model structures that we defined previously.  This really is one of the advantages of working with planar crossed simplicial groups, the properties that they share usually allow us to prove things about all of them after it has been asserted in a single case.  First of all we must think of what is the correct underlying category to use.  We will be considering the category  $\mathscr{P}_N := \textbf{sSet}_{\nabla^N}\dC_N\textbf{-Set}^s$ where
\begin{align*}
\nabla^N_n &= S_{\mathfrak{C}_N}(|\Delta[n]|),\\
\nabla^N X &= X\otimes_{\Delta^{op}} \nabla_\ast^N.
\end{align*}

\begin{lemma}
$\nabla^N$ is Reedy admissible $\forall N \in \mathbb{N}$.  Therefore there is a model structure on $\mathscr{P}_N$ given by Theorem \ref{coupledmodeltheorem}.
\end{lemma}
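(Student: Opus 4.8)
The plan is to verify the two defining conditions of Reedy admissibility for $\nabla^N$ in turn, transporting Blumberg's argument for $\nabla$ (Lemma \ref{proof:reedy}) essentially verbatim, with $SO(2)$ replaced by $Spin_N(2)$ and the cyclic realisation replaced by the $N$-fold realisation of Proposition \ref{realsing}. Colimit preservation is immediate: by construction $\nabla^N X = X \otimes_{\Delta^{op}} \nabla^N_\ast$ is the left adjoint of the functor $A^N(Y)_n = \text{Hom}_{\dC_N\textbf{-Set}}(\nabla^N_n, Y)$, so it preserves all small colimits. It then remains to check the trivial-cofibration condition: given a morphism $(A,B,\nabla^N A \to B) \to (A',B',\nabla^N A' \to B')$ in $\mathscr{P}_N$ with $A \to A'$ a trivial cofibration in $\textbf{sSet}$ and $\nabla^N A' \bigcup_{\nabla^N A} B \to B'$ a trivial cofibration in $\dC_N\textbf{-Set}^s$, the map $B \to B'$ is a weak equivalence.

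First I would factor $B \to B'$ as
$$B \to \nabla^N A' \bigcup_{\nabla^N A} B \to B',$$
whose second map is a weak equivalence by hypothesis; by two-out-of-three it suffices to treat the first map in the strong model. Since $Spin_N(2)$ is an $N$-fold connected cover of $SO(2)$ and hence again a circle, its proper closed subgroups are precisely the finite cyclic subgroups, and under the Quillen equivalence of Proposition \ref{cycNmodel} a strong weak equivalence is detected by the fixed-point spaces $(-)^H$ of the realisation $|-|_{\mathfrak{C}_N}$ for all such $H$. Thus it is enough to prove that
$$|B|^H_{\mathfrak{C}_N} \to \left| \nabla^N A' \bigcup_{\nabla^N A} B \right|^H_{\mathfrak{C}_N}$$
is a weak equivalence of spaces for every proper closed $H < Spin_N(2)$.

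Next I would push the fixed-point functor inside the pushout. Because $|-|_{\mathfrak{C}_N}$ is a left adjoint it preserves the pushout, so the target is $\bigl(|\nabla^N A'|_{\mathfrak{C}_N} \bigcup_{|\nabla^N A|_{\mathfrak{C}_N}} |B|_{\mathfrak{C}_N}\bigr)^H$. Since $A \to A'$ is a monomorphism of simplicial sets, $|\nabla^N A|_{\mathfrak{C}_N} \to |\nabla^N A'|_{\mathfrak{C}_N}$ is a closed inclusion of $Spin_N(2)$-spaces, so the $N$-fold analogue of Lemma \ref{commute} — which is a general statement about compact Lie group spaces and so applies verbatim to $Spin_N(2)$ — allows the fixed-point functor to commute with the pushout, yielding
$$|\nabla^N A'|^H_{\mathfrak{C}_N} \bigcup_{|\nabla^N A|^H_{\mathfrak{C}_N}} |B|^H_{\mathfrak{C}_N}.$$
It then remains to see that $|B|^H_{\mathfrak{C}_N} \to |\nabla^N A'|^H_{\mathfrak{C}_N} \bigcup_{|\nabla^N A|^H_{\mathfrak{C}_N}} |B|^H_{\mathfrak{C}_N}$ is a trivial cofibration; as a pushout of $|\nabla^N A|^H_{\mathfrak{C}_N} \to |\nabla^N A'|^H_{\mathfrak{C}_N}$ this holds as soon as the latter map is a trivial cofibration.

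The hard part, and the only genuinely $N$-dependent input, is establishing that $|\nabla^N A|^H_{\mathfrak{C}_N} \to |\nabla^N A'|^H_{\mathfrak{C}_N}$ is a trivial cofibration. It is a cofibration because $A \hookrightarrow A'$ is one and realisation carries it to a closed inclusion. For the weak-equivalence half I would reproduce the mechanism behind the cyclic map $\zeta$: the $N$-fold realisation carries a canonical comparison $\zeta_N \colon |\nabla^N A|_{\mathfrak{C}_N} \to |A|$, induced by the counit of the $|-|_{\mathfrak{C}_N} \dashv S_{\mathfrak{C}_N}$ adjunction on $\nabla^N_n = S_{\mathfrak{C}_N}(|\Delta[n]|)$ and extended through the tensor $\nabla^N A = A \otimes_{\Delta^{op}} \nabla^N_\ast$; the content to verify is that $\zeta_N$ induces weak equivalences on $H$-fixed points for every proper closed $H < Spin_N(2)$. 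Granting this, the naturality square relating $\zeta_N$ over $A$ and $A'$ to the map $|A| \to |A'|$ (a weak equivalence, since $A \to A'$ is, carrying the trivial $H$-action) gives by two-out-of-three that $|\nabla^N A|^H_{\mathfrak{C}_N} \to |\nabla^N A'|^H_{\mathfrak{C}_N}$ is a weak equivalence, completing the argument. I expect the fixed-point behaviour of $\zeta_N$ to follow from precisely the properties of the realisation/singular adjunction that drive the cyclic case, since $Spin_N(2)$ and $SO(2)$ share the relevant compact-circle geometry; this is exactly the \emph{mutatis mutandis} step exploited throughout the paper.
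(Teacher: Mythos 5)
Your proposal is correct and takes essentially the same route as the paper, whose proof simply observes that Blumberg's argument for Lemma \ref{proof:reedy} relies only on the realisation being a colimit together with Lemma \ref{commute}, both of which hold verbatim for $Spin_N(2)$. You are in fact more explicit than the paper in isolating the one genuinely $N$-dependent input --- that the comparison map $\zeta_N$ induces weak equivalences on $H$-fixed points for all proper closed $H$ --- which the paper leaves implicit in its \emph{mutatis mutandis} appeal.
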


\begin{proof}
If we look at the proof of Lemma \ref{proof:reedy}, we see that the only tool that we required was the fact that the geometric realisation was a colimit combined with Lemma \ref{commute} which works for all $G$.  We have that the realisation $|-|_{\mathfrak{C}_N}$ is a colimit by definition, and therefore the proof is identical.
\end{proof}

\begin{proposition}
There is a Quillen equivalence
$$ L : \mathscr{P}_N \rightleftarrows \textbf{Top}^{\leq \text{Spin}_N\left(2\right)} : R$$
where the adjunctions $L$ and $R$ are the obvious ones obtained form the cyclic case.
\end{proposition}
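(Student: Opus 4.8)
The strategy is to transcribe Blumberg's functors and his proof of the cyclic equivalence (the theorem of \cite[\S 6]{blumberg} reproduced above) essentially word for word, substituting $\mathfrak{C}_N$ for $\mathfrak{C}$, $\nabla^N$ for $\nabla$, and $\text{Spin}_N(2)$ for $SO(2)$ throughout, and then checking that every geometric input survives this substitution. Concretely, $L \colon \mathscr{P}_N \to \textbf{Top}^{\leq \text{Spin}_N(2)}$ sends a triple $(A,B,\nabla^N A \to B)$ to the pushout of $|A| \leftarrow |\nabla^N A|_{\mathfrak{C}_N} \to |B|_{\mathfrak{C}_N}$, where the left leg is the natural map $\zeta^N \colon |\nabla^N A|_{\mathfrak{C}_N} \to |A|$ obtained from the counit of the adjunction $|-|_{\mathfrak{C}_N} \dashv S_{\mathfrak{C}_N}$ of Proposition \ref{realsing} followed by the forgetful functor $u$; and $R$ sends $X$ to $(S(X^{\text{Spin}_N(2)}), S_{\mathfrak{C}_N}(X), \nabla^N S(X^{\text{Spin}_N(2)}) \to S_{\mathfrak{C}_N}(X))$, with the structure map adjoint to $|\nabla^N S(X^{\text{Spin}_N(2)})|_{\mathfrak{C}_N} \to |S(X^{\text{Spin}_N(2)})| \to X^{\text{Spin}_N(2)} \hookrightarrow X$.

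First I would verify that $(L,R)$ is a Quillen pair. Since cofibrations in $\mathscr{P}_N$ are, by Theorem \ref{coupledmodeltheorem}, exactly the maps whose $A$-leg is a cofibration of simplicial sets and whose relative $B$-leg $\nabla^N A' \cup_{\nabla^N A} B \to B'$ is a strong cofibration of $\mathfrak{C}_N$-sets, and since $L$ is built by gluing the two left Quillen functors $|-| \colon \textbf{sSet} \to \textbf{Top}$ and $|-|_{\mathfrak{C}_N} \colon \dC_N\textbf{-Set}^s \to \textbf{Top}^{< \text{Spin}_N(2)}$ (the latter from Proposition \ref{cycNmodel}) along a pushout, it carries generating (trivial) cofibrations of $\mathscr{P}_N$ to (trivial) cofibrations in $\textbf{Top}^{\leq \text{Spin}_N(2)}$. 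This is the same bookkeeping as in the cyclic case and uses no property of $SO(2)$ beyond those shared by $\text{Spin}_N(2)$.

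The heart of the argument is the equivalence. Here I would use the criterion that a Quillen pair is a Quillen equivalence precisely when, for every cofibrant $(A,B,\ldots)$ and fibrant $X$, a map $L(A,B,\ldots) \to X$ is a weak equivalence if and only if its adjoint $(A,B,\ldots) \to RX$ is. A weak equivalence in $\textbf{Top}^{\leq \text{Spin}_N(2)}$ is tested on $H$-fixed points for every closed subgroup $H$; since $\text{Spin}_N(2)$ is topologically a circle, these are the whole group together with the finite cyclic subgroups. The full-group fixed points are controlled by the $A$-leg through the classical $\textbf{sSet}$--$\textbf{Top}$ Quillen equivalence, and the finite (proper) subgroups are controlled by the $B$-leg through the strong-model Quillen equivalence $\dC_N\textbf{-Set}^s \rightleftarrows \textbf{Top}^{< \text{Spin}_N(2)}$ of Proposition \ref{cycNmodel}; the pushout defining $L$ together with the map $\zeta^N$ glue these two pieces compatibly, exactly as in \cite[\S 6]{blumberg}.

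The one point that genuinely requires the geometry of $\mathfrak{C}_N$, and which I expect to be the main obstacle, is establishing that $\zeta^N \colon |\nabla^N A|_{\mathfrak{C}_N} \to |A|$ induces a weak equivalence on $H$-fixed point spaces for every finite subgroup $H < \text{Spin}_N(2)$. This reduces to the representable case $A = \Delta[n]$, where $\nabla^N_n = S_{\mathfrak{C}_N}(|\Delta[n]|)$ is the cofree $\mathfrak{C}_N$-set and one must show $|\,\nabla^N\Delta[n]\,|_{\mathfrak{C}_N}^H \simeq |\Delta[n]|^H = |\Delta[n]|$. This is precisely the fixed-point statement that makes $S_{\mathfrak{C}_N}$ detect finite-subgroup homotopy type, and it follows from the defining property of $|-|_{\mathfrak{C}_N} \dashv S_{\mathfrak{C}_N}$ in Proposition \ref{realsing} together with the commutation of fixed points with the relevant pushouts (Lemma \ref{commute}); since the realisation $|-|_{\mathfrak{C}_N}$ enjoys the same formal properties as the cyclic realisation and the finite subgroups of $\text{Spin}_N(2)$ correspond to those of $SO(2)$ under the covering $\text{Spin}_N(2) \to SO(2)$, Blumberg's verification goes through verbatim. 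The remaining derived unit and counit computations are then formal consequences.
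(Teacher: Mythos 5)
Your proposal is correct and follows the same strategy as the paper, whose entire proof is the two-sentence remark that the statement follows from Blumberg's $N=1$ argument because the relevant properties of $\dC$-sets have already been established for $\dC_N$-sets. Your write-up is in fact considerably more explicit than the paper's, correctly identifying the one genuinely geometric input (that $\zeta^N$ is a fixed-point equivalence for finite subgroups of $\text{Spin}_N(2)$) that must survive the substitution.
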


\begin{proof}
As above, we can prove this statement using the proofs from the $N=1$ case.  The properties of $\dC$-sets that are used to prove the statement have been proved in the case of $\dC_N$-sets. 
\end{proof}

Again, we will ease notation and denote $\dC_N\textbf{-Set}^c := \mathscr{P}_N := \textbf{sSet}_{\nabla^N} \dC_N \textbf{-Set}^s$ and call it the \textit{coupled model} on $N$-fold cyclic sets.

The question now is how can we extend this to other planar crossed simplicial groups.  Spali\'{n}ski answered this question with respect to $\dD\textbf{-Set}$ in relation to $\textbf{Top}^{O(2)}$ in \cite{discrete02}.  We outline the adjustments needed from the above to facilitate this. In the case of $O(2)$, we have two infinite subgroups, namely $SO(2)$ and $O(2)$ itself.  Therefore we need some way to take into account the $\mathbb{Z}/2$ action which relates $SO(2)$ and $O(2)$.  Denote by $\Delta \mathfrak{R}$ the crossed simplicial group where $\mathfrak{G}_n = \mathbb{Z}/2$ for all $n$.  We can put a model structure on the category $\dR \textbf{-Set}$ which has a Quillen adjunction to the equivariant model structure on $\textbf{Top}^{\mathbb{Z}/2}$.  This model structure is discussed in \cite[\S 3]{discrete02}.

\begin{definition}
Let $\Xi_n = S_{\mathfrak{D}}\left(\Delta^n_{\mathbb{Z}/2}\right)$ where $\Delta^n_{\mathbb{Z}/2} := \mathbb{Z}/2 \times \Delta^n$.  Then we have that $\Xi_\ast$ is a cosimplicial dihedral set.  We then define
\begin{align*}
\Xi \colon  \dR \textbf{-Set} &\to \dD \textbf{-Set}^s,\\
\Xi X &= X \otimes_{\dR^{op}} \Xi_\ast.
\end{align*}
\end{definition}

We consider the category $\mathscr{K} := \dR \textbf{-Set}_\Xi \dD \textbf{-Set}^s$.  We see that this is related to the previous case, all we have done is replaced $\textbf{sSet}$ with $\dR \textbf{-Set}$ which will track the $\mathbb{Z}/2$ action.  As Spali\'{n}ski points out in \cite{discrete02}, the proofs involving the category $\mathscr{K}$ are analogous to the case proved by Blumberg due to the properties of the realisation functor.

\begin{lemma}[{\cite[Lemma 4.5]{discrete02}}]
The functor $\Xi \colon  \dR \textbf{-Set} \to \dD \textbf{-Set}^s$ is Reedy admissible.
\end{lemma}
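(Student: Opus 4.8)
The plan is to transport the proof of Lemma~\ref{proof:reedy} to the present setting, replacing the category $\textbf{sSet}$ by $\dR\textbf{-Set}$, the functor $\nabla$ by $\Xi$, and the cyclic realisation $|-|_{\mathfrak{C}}$ by the dihedral realisation $|-|_{\mathfrak{D}}$ of Proposition~\ref{realsing}. The colimit-preservation half of Reedy admissibility is immediate: since $\Xi X = X \otimes_{\dR^{op}} \Xi_\ast$ is a coend against the cosimplicial dihedral set $\Xi_\ast$, the functor $\Xi$ is a left adjoint (its right adjoint sends $Y$ to the $\dR$-set $n \mapsto \mathrm{Hom}_{\dD\textbf{-Set}}(\Xi_n, Y)$), and hence preserves all colimits. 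Everything therefore reduces to the trivial-cofibration condition.

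For that condition, suppose given a morphism $(A,B,\Xi A \to B) \to (A',B',\Xi A' \to B')$ in $\mathscr{K}$ with $A \to A'$ a trivial cofibration in $\dR\textbf{-Set}$ and $\Xi A' \bigcup_{\Xi A} B \to B'$ a trivial cofibration in $\dD\textbf{-Set}^s$. As in Blumberg's argument I would factor $B \to B'$ as
$$B \to \Xi A' \bigcup_{\Xi A} B \to B',$$
where the second map is a weak equivalence by hypothesis, so it suffices to treat the first. Applying $|-|_{\mathfrak{D}}$ and passing to $H$-fixed points — which, as in the cyclic case, detects the weak equivalences of $\dD\textbf{-Set}^s$ via the Quillen equivalence with $\textbf{Top}^{< O(2)}$ — it suffices to show that
$$|B|_{\mathfrak{D}}^H \to \left| \Xi A' \bigcup_{\Xi A} B \right|_{\mathfrak{D}}^H$$
is a weak equivalence for every proper closed $H \subset O(2)$. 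Since $|-|_{\mathfrak{D}}$ is a colimit it carries the pushout to a pushout of $O(2)$-spaces; since $A \to A'$ is a cofibration of $\dR$-sets it is a levelwise inclusion, so $|\Xi A|_{\mathfrak{D}} \to |\Xi A'|_{\mathfrak{D}}$ is a closed inclusion; and Lemma~\ref{commute} then lets $(-)^H$ pass through the pushout. We are thus reduced to showing that $|\Xi A|_{\mathfrak{D}}^H \to |\Xi A'|_{\mathfrak{D}}^H$ is a trivial cofibration, after which its pushout $|B|_{\mathfrak{D}}^H \to |\Xi A'|_{\mathfrak{D}}^H \bigcup_{|\Xi A|_{\mathfrak{D}}^H} |B|_{\mathfrak{D}}^H$ is again a trivial cofibration, completing the proof.

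The one genuinely new ingredient, and the step I expect to be the main obstacle, is the dihedral analogue of Blumberg's comparison map $\zeta$. Concretely I would produce a natural map $\zeta \colon |\Xi A|_{\mathfrak{D}} \to |A|_{\mathfrak{R}}$ to the $\mathbb{Z}/2$-equivariant realisation of the $\dR$-set $A$ (which exists by Theorem~\ref{real}(3), as $|\mathfrak{R}| = \mathbb{Z}/2$) and verify that it induces weak equivalences on $H$-fixed points for every proper closed $H \subset O(2)$. Granting this, $|\Xi A|_{\mathfrak{D}}^H \to |\Xi A'|_{\mathfrak{D}}^H$ is identified up to weak equivalence with a fixed-point map of the $\mathbb{Z}/2$-equivariant weak equivalence $|A|_{\mathfrak{R}} \to |A'|_{\mathfrak{R}}$ coming from the trivial cofibration $A \to A'$, hence is itself a weak equivalence; being a closed inclusion, it is then a trivial cofibration, as required. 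The subtlety absent in the cyclic case is twofold: one must handle the reflections in $O(2)$ — which is exactly why the tracking category is $\dR\textbf{-Set}$ rather than $\textbf{sSet}$ — and one must also account for the infinite proper subgroup $SO(2) \subset O(2)$, so that the fixed-point comparison for $\zeta$ has to be checked for an infinite subgroup as well as for the finite cyclic and dihedral ones, with care taken over which fixed-point locus of $|A|_{\mathfrak{R}}$ each $H$ selects. Once the behaviour of $\zeta$ on these subgroups is established, the remainder of the argument is formal and follows Lemma~\ref{proof:reedy} essentially verbatim.
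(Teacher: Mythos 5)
The paper gives no proof of this lemma at all --- it is quoted directly from Spali\'{n}ski \cite{discrete02}, with only the preceding remark that ``the proofs involving the category $\mathscr{K}$ are analogous to the case proved by Blumberg due to the properties of the realisation functor.'' Your proposal is exactly that analogous argument, transporting Lemma~\ref{proof:reedy} with $\textbf{sSet}$, $\nabla$, $|-|_{\mathfrak{C}}$ replaced by $\dR\textbf{-Set}$, $\Xi$, $|-|_{\mathfrak{D}}$, so it matches the intended approach; the one ingredient you rightly flag as still needing verification (the fixed-point behaviour of the comparison map $\zeta$ on the various closed subgroups of $O(2)$) is precisely what the cited lemma of Spali\'{n}ski supplies.
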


\begin{corollary}\label{coupledmodel2}
There exists a model structure on $\mathscr{K}$ in which a map is a:
\begin{enumerate}
\item Weak equivalence if $A \to A'$ is a weak equivalence of $\dR$-sets and $B \to B'$ is a strong weak equivalence of dihedral sets.
\item Fibration if $A \to A'$ is a fibration of $\dR$-sets and $B \to B'$ is a strong fibration of dihedral sets.
\item Cofibration if $A \to A'$ is a cofibration of $\dR$-sets and $\Xi A' \bigcup_{\Xi A} B \to B'$ is a strong cofibration of dihedral sets.
\end{enumerate}
\end{corollary}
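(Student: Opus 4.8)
The plan is to obtain this as a direct instance of Theorem \ref{coupledmodeltheorem}, exactly as Corollary \ref{coupledmodel} was obtained in the cyclic case. First I would record that both halves of the coupled category carry the model structures the theorem demands: the category $\dR\textbf{-Set}$ is equipped with the model structure of \cite[\S 3]{discrete02} (the one admitting a Quillen adjunction to $\textbf{Top}^{\mathbb{Z}/2}$), while $\dD\textbf{-Set}^s$ is the strong model structure on dihedral sets established earlier. Thus in the notation of Theorem \ref{coupledmodeltheorem} we set $\mathscr{C} = \dR\textbf{-Set}$, $\mathscr{D} = \dD\textbf{-Set}^s$ and $F = \Xi$.

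Next I would invoke the preceding lemma, which asserts precisely that $\Xi \colon \dR\textbf{-Set} \to \dD\textbf{-Set}^s$ is Reedy admissible; this is the only hypothesis of Theorem \ref{coupledmodeltheorem} that is not automatic. Applying the theorem then yields a model structure on $\mathscr{K} = \dR\textbf{-Set}_\Xi\dD\textbf{-Set}^s$, and it remains only to match the three classes of maps it delivers against those in the statement. By definition the weak equivalences (resp.\ fibrations) of $\mathscr{D}$ are the strong weak equivalences (resp.\ strong fibrations) of dihedral sets, and those of $\mathscr{C}$ are the weak equivalences (resp.\ fibrations) of $\dR$-sets, so clauses (1) and (2) coincide term by term; clause (3) is the cofibration condition of the theorem with $F = \Xi$ substituted, producing the pushout-corner map $\Xi A' \bigcup_{\Xi A} B \to B'$.

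There is essentially no obstacle remaining at this point, since all the genuine content has already been discharged: partly in establishing the strong model structure on $\dD\textbf{-Set}$, and partly in the Reedy-admissibility lemma for $\Xi$. The single point requiring a moment's care is to confirm that the notion of Reedy admissibility invoked by Theorem \ref{coupledmodeltheorem} (colimit preservation together with the trivial-cofibration condition on the coupling square) is exactly the property supplied by the cited lemma, so that the theorem applies verbatim; this is immediate from the definitions. Should one additionally wish to record cofibrant generation, one can appeal to the earlier lemma showing $\mathscr{A}_F\mathscr{B}$ is cofibrantly generated whenever $\mathscr{A}$, $\mathscr{B}$ are and $F$ is Reedy admissible, as both $\dR\textbf{-Set}$ and $\dD\textbf{-Set}^s$ are cofibrantly generated.
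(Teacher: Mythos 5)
Your proposal is correct and matches the paper's (implicit) argument exactly: the corollary is obtained by applying Theorem \ref{coupledmodeltheorem} to $F = \Xi \colon \dR\textbf{-Set} \to \dD\textbf{-Set}^s$, with the Reedy admissibility supplied by the immediately preceding lemma quoted from Spali\'{n}ski. Nothing further is needed.
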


We now must construct a bijection between our two categories of interest, $\mathscr{K}$ and $\textbf{Top}^{O\left(2\right)}$.  We can do this in the akin to the cyclic case, and we get the following result.

\begin{theorem}[{\cite[\S 5]{discrete02}}]
There exists functors $L$ and $R$ which specify a Quillen equivalence between $\mathscr{K}$ equipped with the model structure given in Corollary \ref{coupledmodel2} and $\textbf{Top}^{\leq O\left(2\right)}$.
\end{theorem}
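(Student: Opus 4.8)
The plan is to define the adjunction $(L,R)$ in exact analogy with Blumberg's cyclic functors, substituting the $\dR$-set data for the simplicial data and the dihedral realisation $|-|_{\mathfrak{D}}$ for the cyclic realisation $|-|_{\mathfrak{C}}$, then to run the same verification that $(L,R)$ is a Quillen equivalence. Concretely, I would define $L \colon \mathscr{K} \to \textbf{Top}^{O(2)}$ on a triple $\left(A,B,\Xi A \to B\right)$ to be the pushout
$$\xymatrix{|\Xi A|_{\mathfrak{D}} \ar[r] \ar[d]_{\zeta} & |B|_{\mathfrak{D}} \ar[d] \\ |A|_{\mathfrak{R}} \ar[r] & X}$$
where $|A|_{\mathfrak{R}}$ is the $\mathbb{Z}/2$-realisation of the $\dR$-set $A$, regarded as an $O(2)$-space with trivial $SO(2)$-action via the quotient $O(2) \twoheadrightarrow O(2)/SO(2) \cong \mathbb{Z}/2$, and $\zeta$ is the natural map obtained from the counit of the $(|-|_{\mathfrak{D}}, S_{\mathfrak{D}})$-adjunction applied to the defining cosimplicial object $\Xi_\ast = S_{\mathfrak{D}}(\Delta^\ast_{\mathbb{Z}/2})$. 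Dually, $R \colon \textbf{Top}^{O(2)} \to \mathscr{K}$ sends $X$ to the triple $\left(S_{\mathfrak{R}}(X^{SO(2)}), S_{\mathfrak{D}}(X), \Xi S_{\mathfrak{R}}(X^{SO(2)}) \to S_{\mathfrak{D}}(X)\right)$, where $X^{SO(2)}$ carries its residual $O(2)/SO(2) = \mathbb{Z}/2$-action and the coupling map is adjoint to the composite $|\Xi S_{\mathfrak{R}}(X^{SO(2)})|_{\mathfrak{D}} \to |S_{\mathfrak{R}}(X^{SO(2)})|_{\mathfrak{R}} \to X^{SO(2)} \hookrightarrow X$.

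First I would establish $L \dashv R$ by assembling it from the two realisation/singular adjunctions relating $\dR\textbf{-Set}$ to $\textbf{Top}^{\mathbb{Z}/2}$ and $\dD\textbf{-Set}^s$ to $\textbf{Top}^{O(2)}$, together with the pushout defining $L$; this is a formal consequence of the universal property of the pushout and the naturality of $\zeta$, exactly as in the cyclic case. Next I would show $(L,R)$ is a Quillen adjunction. Using the characterisation of cofibrations in $\mathscr{K}$ from Corollary \ref{coupledmodel2} and the fixed-point model structure on $\textbf{Top}^{\leq O(2)}$, it suffices to check that $L$ preserves cofibrations and trivial cofibrations. Via the pushout this reduces to the facts that the two realisation functors are left Quillen into the relevant fixed-point model structures and that $\zeta$ is a closed inclusion with the correct fixed-point behaviour; the commuting of $\left(-\right)^H$ with the defining pushout for every closed $H$ then follows from Lemma \ref{commute}.

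The decisive step is the Quillen equivalence. The key point is that the pair $(A,B)$ detects every closed subgroup of $O(2)$: the finite subgroups (the cyclic $\mathbb{Z}/n$, the reflections, and the dihedral $D_n$) are governed by the strong model on $B$ through the subdivision functors $\Phi_\ast$ and $\Gamma_\ast$, which already realise $\dD\textbf{-Set}^s \rightleftarrows \textbf{Top}^{< O(2)}$, while the two infinite closed subgroups $SO(2)$ and $O(2)$ are governed by the $\dR$-set $A$, whose realisation computes $X^{SO(2)}$ as a $\mathbb{Z}/2$-space with $\left(X^{SO(2)}\right)^{\mathbb{Z}/2} = X^{O(2)}$. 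I would then verify that, for cofibrant $\left(A,B,\Xi A \to B\right)$, the derived counit $LRX \to X$ and, for fibrant $X$, the derived unit are weak equivalences by inspecting each fixed-point subspace in turn: the pushout is arranged so that on $SO(2)$-fixed points it recovers $|A|_{\mathfrak{R}}$ (using that $\zeta$ is a fixed-point equivalence on all finite subgroups) and on finite-subgroup fixed points it recovers the strong-model realisation of $B$.

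The hard part will be the bookkeeping in this last step: checking that the single pushout simultaneously produces the correct homotopy type on the $SO(2)$- and $O(2)$-fixed subspaces, carried by $A$ through the $\dR$-set realisation, and on every finite-subgroup fixed subspace, carried by $B$ through the strong model, with the coupling map $\Xi A \to B$ gluing these consistently. All of the individual ingredients---Lemma \ref{commute}, the colimit description of $|-|_{\mathfrak{D}}$, the strong-model equivalence, and the $\dR\textbf{-Set} \rightleftarrows \textbf{Top}^{\mathbb{Z}/2}$ equivalence---are already in hand, so the argument is structurally identical to Blumberg's; the genuine content is confirming that the extra residual $\mathbb{Z}/2$-action, absent in the cyclic case, is correctly tracked by $A$ and does not interfere with the finite-subgroup analysis.
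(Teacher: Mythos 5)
Your proposal is correct and is essentially the argument the paper intends: the paper itself gives no proof here, simply citing Spali\'{n}ski's \S 5 and remarking just beforehand that ``the proofs involving the category $\mathscr{K}$ are analogous to the case proved by Blumberg,'' which is exactly the substitution of $\dR$-sets for simplicial sets and $|-|_{\mathfrak{D}}$ for $|-|_{\mathfrak{C}}$ that you carry out. Your identification of the division of labour --- finite subgroups detected by $B$ via the strong model, the infinite subgroups $SO(2)$ and $O(2)$ detected by $A$ with its residual $\mathbb{Z}/2$-action --- is the correct reading of the construction.
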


Now, again we modify to allow it to work for $\dD_N$, the proof of which is obtainable from the theory discussed above and using the ideas from the strong model structures.  We consider the category $\mathscr{K}_N := \dR\textbf{-Set}_{\Xi^N}\dD_N\textbf{-Set}^s$ where
\begin{align*}
\Xi^N_n &= S_{\mathfrak{D}_N}\left(\Delta^n_{\mathbb{Z}/2}\right),\\
\Xi^N X &= X\otimes_{\dR^{op}} \Xi_\ast^N.
\end{align*}

\begin{lemma}
$\Xi^N$ is Reedy admissible $\forall N \in \mathbb{N}$, therefore there is a model structure on $\mathscr{K}_N$ given by Theorem \ref{coupledmodeltheorem}.  Moreover there is a Quillen equivalence  $\mathscr{K}_N \rightleftarrows \textbf{Top}^{\leq \text{Pin}^+_N\left(2\right)}$.
\end{lemma}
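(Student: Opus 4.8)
The plan is to transcribe, up to the change of structure group, the two-stage argument that produced the $N=1$ dihedral coupled model and the $N$-fold cyclic coupled model, relying throughout on the fact that the realisation/singular adjunction $\left(|-|_{\mathfrak{D}_N}, S_{\mathfrak{D}_N}\right)$ of Proposition \ref{realsing} enjoys exactly the same formal properties as its $\mathfrak{D}$-analogue.

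First I would verify that $\Xi^N$ is Reedy admissible. Inspecting the proof of the $N=1$ statement (\cite[Lemma 4.5]{discrete02}), which is itself the dihedral counterpart of Lemma \ref{proof:reedy}, one sees that the only tools invoked are that geometric realisation is a colimit (hence commutes with pushouts) and Lemma \ref{commute} (the fixed-point functor $(-)^H$ preserves pushouts of diagrams one of whose legs is a closed inclusion, for \emph{any} compact group). Concretely, given a morphism $\left(A,B,\Xi^N A \to B\right) \to \left(A',B',\Xi^N A' \to B'\right)$ in $\mathscr{K}_N$ with $A \to A'$ a trivial cofibration of $\dR$-sets and $\Xi^N A' \bigcup_{\Xi^N A} B \to B'$ a trivial cofibration of $\dD_N$-sets, I would factor $B \to B'$ as
$$B \to \Xi^N A' \bigcup_{\Xi^N A} B \to B',$$
note that the second map is a weak equivalence by hypothesis, and treat the first by passing to $|-|_{\mathfrak{D}_N}$ and taking $H$-fixed points for every closed $H < \text{Pin}^+_N(2)$. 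Since $|-|_{\mathfrak{D}_N}$ is a colimit, Lemma \ref{commute} commutes the fixed-point functor past the pushout, exhibiting the first map as a pushout of a trivial cofibration, hence a weak equivalence. As nothing here refers to the group beyond its compactness, the argument is identical for each $N$, and Theorem \ref{coupledmodeltheorem} then immediately furnishes the model structure on $\mathscr{K}_N = \dR\textbf{-Set}_{\Xi^N}\dD_N\textbf{-Set}^s$ with the weak equivalences, fibrations and cofibrations described as in Corollary \ref{coupledmodel2}.

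For the Quillen equivalence I would define $L \colon \mathscr{K}_N \to \textbf{Top}^{\leq \text{Pin}^+_N(2)}$ and its right adjoint $R$ as the evident transcriptions of the dihedral functors. The functor $L$ sends a triple $\left(A,B,\Xi^N A \to B\right)$ to the pushout in $\textbf{Top}^{\text{Pin}^+_N(2)}$ of the span $|A|_{\mathfrak{R}} \leftarrow |\Xi^N A|_{\mathfrak{D}_N} \to |B|_{\mathfrak{D}_N}$, where $|-|_{\mathfrak{R}}$ denotes the $\mathbb{Z}/2$-equivariant realisation of $\dR$-sets and the left leg is the natural comparison map; the functor $R$ sends a space $X$ to the triple assembled from the $\dR$-singular complex of the fixed-point $\mathbb{Z}/2$-space $X^{\text{Spin}_N(2)}$ and the $\dD_N$-singular complex $S_{\mathfrak{D}_N}(X)$. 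That $(L,R)$ is a Quillen equivalence then follows exactly as in the $N=1$ dihedral case (\cite[\S 5]{discrete02}), since the verification uses only the formal behaviour of the realisation/singular adjunction, which Proposition \ref{realsing} guarantees is shared by $\mathfrak{D}_N$ and $\mathfrak{D}$.

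The one point genuinely requiring care --- the recurring ``hard part'' in all these transcriptions --- is the construction and fixed-point analysis of the natural comparison map entering $L$: one must check that it plays the role of the map $\zeta$ from the cyclic case, inducing weak equivalences on $H$-fixed points for every closed $H$, and in particular that the $\dR$-set factor correctly records the interrelation of the two infinite subgroups $\text{Spin}_N(2)$ and $\text{Pin}^+_N(2)$ that it was introduced to track. Once this is confirmed for $\text{Pin}^+_N(2)$, the remainder of the argument is formal and proceeds mutatis mutandis.
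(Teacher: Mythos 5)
Your proposal is correct and follows essentially the same route as the paper, which itself offers no written proof beyond asserting that the arguments of Blumberg and Spali\'{n}ski for the $N=1$ cases transfer because the only tools used are the colimit property of $|-|_{\mathfrak{D}_N}$, Lemma \ref{commute}, and the shared formal properties of the realisation/singular adjunction from Proposition \ref{realsing}. Your write-up is in fact more explicit than the paper's (the factorisation of $B \to B'$, the transcription of $L$ and $R$, and the flagging of the comparison map $\zeta$ as the genuine point of care), but the underlying argument is identical.
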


Denote by $\Delta \mathfrak{D}_N\textbf{-Set}^c := \mathscr{K}_N := \dR\textbf{-Set}_{\Xi^N}\dD_N\textbf{-Set}^s$ the \textit{coupled model} on $N$-fold dihedral sets.

Finally, we define the $\dQ_M$ case, which properties follow after the obvious changes have been made.  We consider the category $\mathscr{Q}_M := \dR\textbf{-Set}_{\Psi^M}\dQ_M\textbf{-Set}^s$ where
\begin{align*}
\Psi^M_n &= S_{\mathfrak{Q}_M}\left(\Delta^n_{\mathbb{Z}/2}\right),\\
\Psi^M X &= X \otimes_{\dR^{op}} \Psi_\ast^M.
\end{align*}

\begin{lemma}
$\Psi^M$ is Reedy admissible $\forall M \in \mathbb{N}$, therefore there is a model structure on $\mathscr{Q}_M$ given by Theorem \ref{coupledmodeltheorem}.  Moreover there is a Quillen equivalence $\mathscr{Q}_M \rightleftarrows \textbf{Top}^{\leq \text{Pin}^-_{2M}\left(2\right)}$.
\end{lemma}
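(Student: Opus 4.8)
The plan is to follow, essentially verbatim, the blueprint of the cyclic and dihedral coupled cases, since $\mathscr{Q}_M = \dR\textbf{-Set}_{\Psi^M}\dQ_M\textbf{-Set}^s$ has the same shape as the dihedral coupling $\mathscr{K}_N$: in both we couple the $\mathbb{Z}/2$-tracking base $\dR\textbf{-Set}$ to the strong model on the ambient planar crossed simplicial group, and compare against the coupled equivariant model on $\textbf{Top}^{\leq G}$. The two facts that drive every step are that the realisation $|-|_{\mathfrak{Q}_M}$ is a colimit (true by its very definition) and that the fixed-point functor $(-)^H$ commutes with pushouts along closed inclusions for every compact group (Lemma \ref{commute}); neither is sensitive to the specific group, so the quaternionic case inherits the arguments unchanged.

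First I would prove Reedy admissibility of $\Psi^M$, mimicking Lemma \ref{proof:reedy}. Given a morphism $(A,B,\Psi^M A \to B) \to (A',B',\Psi^M A' \to B')$ in $\mathscr{Q}_M$ with $A \to A'$ a trivial cofibration of $\dR$-sets and $\Psi^M A' \bigcup_{\Psi^M A} B \to B'$ a trivial cofibration in $\dQ_M\textbf{-Set}^s$, one factors $B \to B'$ as $B \to \Psi^M A' \bigcup_{\Psi^M A} B \to B'$ and reduces to showing the first leg is a weak equivalence. Passing to realisations, this amounts to verifying that $|B|_{\mathfrak{Q}_M}^H \to |\Psi^M A' \bigcup_{\Psi^M A} B|_{\mathfrak{Q}_M}^H$ is a weak equivalence for every proper closed subgroup $H$ of $\text{Pin}^-_{2M}(2)$. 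Since $|-|_{\mathfrak{Q}_M}$ preserves the pushout and, by Lemma \ref{commute}, $(-)^H$ commutes with it along the closed inclusion $|\Psi^M A|_{\mathfrak{Q}_M} \to |\Psi^M A'|_{\mathfrak{Q}_M}$, the map in question is a pushout of a trivial cofibration on fixed points, hence itself a trivial cofibration and in particular a weak equivalence. With admissibility established, Theorem \ref{coupledmodeltheorem} immediately yields the claimed model structure on $\mathscr{Q}_M$.

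For the Quillen equivalence I would transcribe the comparison functors $L$ and $R$ from the dihedral case of \cite{discrete02}. Here $R$ sends a $\text{Pin}^-_{2M}(2)$-space $X$ to the triple whose base is the $\dR$-set recording the fixed points of $X$ under the infinite closed subgroups, whose other entry is the quaternionic singular set $S_{\mathfrak{Q}_M}(X)$, and whose structure map is adjoint to the evident comparison $|\Psi^M(-)|_{\mathfrak{Q}_M} \to X$; dually $L$ forms the pushout gluing $|\Psi^M A|_{\mathfrak{Q}_M} \to |B|_{\mathfrak{Q}_M}$ along the natural comparison $\zeta$ to the realisation of the base. That $(L,R)$ is a Quillen equivalence follows \emph{mutatis mutandis}, as the proof in \cite{discrete02} rests only on the formal properties of the realisation/singular adjunction of Proposition \ref{realsing}, all of which $\dQ_M$ enjoys.

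The main obstacle, such as it is, lies not in the formal model-structure construction but in checking that the $\dR\textbf{-Set}$-coupling really does capture the lattice of \emph{infinite} closed subgroups of $\text{Pin}^-_{2M}(2)$, exactly as it captures $SO(2) < O(2)$ dihedrally. Concretely one must confirm that $\text{Pin}^-_{2M}(2)$ has precisely two infinite closed subgroups — its circle identity component (the spin part) and the whole group — related by the very $\mathbb{Z}/2$ that the reflexive piece $\dR$ encodes, and that $\zeta$ induces fixed-point weak equivalences for each. Because $\text{Pin}^-$ and $\text{Pin}^+$ differ only in the sign of the square of a reflection, a feature invisible both to the subgroup lattice and to the colimit/fixed-point formalism above, this reduces to the identical verification already carried out for $\text{Pin}^+_N(2)$, and the result follows.
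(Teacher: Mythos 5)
Your proposal is correct and follows the same route the paper intends: the paper offers no explicit proof for this lemma, relying on the statement that the quaternionic case ``follows after the obvious changes have been made'' from the dihedral construction, and your argument simply fleshes out that transfer --- Reedy admissibility via the colimit property of $|-|_{\mathfrak{Q}_M}$ together with Lemma \ref{commute}, then the model structure from Theorem \ref{coupledmodeltheorem} and the Quillen equivalence by transcribing $L$ and $R$ from \cite{discrete02}. Your closing check that the infinite closed subgroups of $\text{Pin}^-_{2M}(2)$ are exactly what the $\dR\textbf{-Set}$ coupling encodes is a worthwhile detail the paper leaves entirely implicit.
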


Denote by $\Delta \mathfrak{Q}_M\textbf{-Set}^c :=\mathscr{Q}_M := \dR\textbf{-Set}_{\Psi^M}\dQ_M\textbf{-Set}^s$ the \textit{coupled model} on $M$-fold quaternionic sets.

\section{Presheaves with Values in Compact Planar Crossed Simplicial Groups}\label{sec4}

In this section we will develop local model structures on presheaves with values in compact planar crossed simplicial groups with the three different models discussed in Section \ref{models}.  We will begin by recalling the theory of simplicial presheaves as discussed, at length, in \cite{quillenpre}.

\subsection{Simplicial Presheaves}

\begin{definition}
Let $\mathscr{C}$ be a simplicial category, a \textit{topology} on $\mathscr{C}$ is a Grothendieck topology on the category $\text{Ho}(\mathscr{C})$.  A \textit{simplicial site} $(\mathscr{C},\tau)$ is the data of $\mathscr{C}$ along with the topology $\tau$.
\end{definition}

\begin{definition}
Let $\mathscr{C}$ be a simplicial category, the category of \textit{simplicial presheaves} is the functor category
$$\textbf{sPr}(\mathscr{C}) = \textbf{Fun}(\mathscr{C}^{op},\textbf{sSet})$$
\end{definition}

We will now put the first model structure on the category $\textbf{sPr}(\mathscr{C})$, which will not use any properties of the topology of the simplicial site.  This is known as the point-wise (or sometimes global) projective structure and was originally developed in \cite{bousfield1972homotopy}.

\begin{proposition}
There exists a \textit{point-wise model structure} on the category $\textbf{sPr}(\mathscr{C})$ where a map $f:\mathscr{F} \to \mathscr{F}'$ is a:
\begin{enumerate}
\item Weak equivalence if for any $X \in \mathscr{C}$ the map $\mathscr{F}(X) \to \mathscr{F}'(X)$ is a weak equivalence of simplicial sets.
\item Fibration if for any $X \in \mathscr{C}$ the map $\mathscr{F}(X) \to \mathscr{F}'(X)$ is a fibration of simplicial sets.
\item Cofibration if it has the left-lifting property with respect to the class of trivial fibrations.
\end{enumerate}
\end{proposition}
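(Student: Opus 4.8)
The plan is to recognise this as the projective (point-wise) model structure on the diagram category $\textbf{Fun}(\mathscr{C}^{op},\textbf{sSet})$ and to obtain it by lifting the cofibrantly generated Quillen structure on $\textbf{sSet}$ along the evaluation functors, object by object. First I would recall that $\textbf{sSet}_\text{Quillen}$ is cofibrantly generated, with generating cofibrations $I=\{\partial\Delta[n]\hookrightarrow\Delta[n]\}$ and generating trivial cofibrations $J=\{\Lambda^k[n]\hookrightarrow\Delta[n]\}$, and that $\textbf{sPr}(\mathscr{C})$ is bicomplete with limits and colimits formed object-wise, since $\textbf{sSet}$ is bicomplete. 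Assuming, as our site convention requires, that $\mathscr{C}$ is small, for each object $X\in\mathscr{C}$ the evaluation functor $\mathrm{ev}_X\colon\textbf{sPr}(\mathscr{C})\to\textbf{sSet}$, $\mathscr{F}\mapsto\mathscr{F}(X)$, admits a left adjoint $F_X$, the copower of the representable presheaf $\mathscr{C}(-,X)$ with a simplicial set, given on an object $Y$ by $F_X(K)(Y)=\mathscr{C}(Y,X)\cdot K$. I would then propose as generating sets $I_{\textbf{sPr}}=\{F_X(i)\mid X\in\mathscr{C},\,i\in I\}$ and $J_{\textbf{sPr}}=\{F_X(j)\mid X\in\mathscr{C},\,j\in J\}$.

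The adjunctions $F_X\dashv\mathrm{ev}_X$ do the bookkeeping. A map $f$ in $\textbf{sPr}(\mathscr{C})$ has the right lifting property against $F_X(i)$ if and only if $\mathrm{ev}_X(f)=f_X$ has the right lifting property against $i$ in $\textbf{sSet}$. Hence $f$ lifts against all of $I_{\textbf{sPr}}$ exactly when every component $f_X$ is a trivial fibration of simplicial sets, and $f$ lifts against all of $J_{\textbf{sPr}}$ exactly when every $f_X$ is a fibration. This identifies the classes forced by $I_{\textbf{sPr}}$ and $J_{\textbf{sPr}}$ with the object-wise trivial fibrations and the object-wise fibrations, matching the fibrations and, via lifting, the cofibrations of the proposition. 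The weak equivalences, being detected object-wise, satisfy two-out-of-three and are closed under retracts because these hold in $\textbf{sSet}_\text{Quillen}$; smallness of the domains of $I_{\textbf{sPr}}$ and $J_{\textbf{sPr}}$ is inherited from $\textbf{sSet}$ through the left adjoints $F_X$.

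With these facts in hand I would invoke the recognition theorem for cofibrantly generated model structures, for which the one remaining hypothesis is the acyclicity condition: every relative $J_{\textbf{sPr}}$-cell complex is a weak equivalence. This is the step I expect to carry the real content. The argument is that colimits in $\textbf{sPr}(\mathscr{C})$ are computed object-wise, so evaluating a pushout of some $F_X(j)$ at an object $Y$ yields a pushout in $\textbf{sSet}$ along a coproduct of copies of $j$, which is again a trivial cofibration; transfinite composites of such maps remain trivial cofibrations, since trivial cofibrations in $\textbf{sSet}_\text{Quillen}$ are closed under pushout, coproduct and transfinite composition. Thus every relative $J_{\textbf{sPr}}$-cell complex is object-wise a trivial cofibration, and in particular an object-wise weak equivalence. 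The hypotheses of the recognition theorem are then met, producing the cofibrantly generated model structure whose fibrations and weak equivalences are exactly the object-wise ones described, with the cofibrations characterised by the stated lifting property automatically. The only genuine subtlety to watch is the interaction of the copower $\mathscr{C}(Y,X)\cdot(-)$ with the simplicial enrichment of $\mathscr{C}$, but this does not disturb the closure properties used above.
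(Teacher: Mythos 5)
Your argument is correct: it is the standard construction of the projective model structure on a diagram category by transferring the cofibrantly generated structure on $\textbf{sSet}_\text{Quillen}$ along the evaluation functors via their left adjoints and invoking the recognition theorem, with the acyclicity step handled object-wise. The paper itself gives no proof of this proposition --- it is recalled with a citation to Bousfield--Kan --- and your write-up is essentially the argument found in that standard reference, so there is nothing to reconcile; the one point you rightly flag, that $\mathscr{C}(Y,X)$ is a simplicial set rather than a set since the site is simplicial, causes no trouble because $L\times(-)$ preserves (trivial) cofibrations of simplicial sets for any simplicial set $L$.
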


To reflect the topology of the site, we will add more weak equivalences.  The following definition is one way to formulate these in a base-point free way.

\begin{definition}
A map $f \colon \mathscr{F} \to \mathscr{F}'$ in $\textbf{sPr}(\mathscr{C})$ is a \textit{local weak equivalence} if:
\begin{itemize}
\item the induced map $\widetilde{\pi_0}\mathscr{F} \to \widetilde{\pi_0}\mathscr{F}'$ is an isomorphism of sheaves, where $\widetilde{\pi_0}$ is the sheafification of $\pi_0$.
\item the induced squares
$$\xymatrix{\pi_n F \ar[r] \ar[d] &\ar[d] \pi_n F' \\
F_0 \ar[r] &  F'_0}$$
are pullbacks after sheafification.
\end{itemize}
\end{definition}

\begin{theorem}[{\cite[\S 3]{jardinesimplicialpresheaves}}]
Let $(\mathscr{C}, \tau)$ be a simplicial site.  There exists a cofibrantly generated \textit{local model structure} on the category $\textbf{sPr}(\mathscr{C})$ where a map $f:\mathscr{F} \to \mathscr{F}'$ is a:
\begin{enumerate}
\item Weak equivalence if it is a local weak equivalence
\item Fibration if it has the right-lifting property with respect to the trivial cofibrations
\item Cofibration if it is a cofibration in the point-wise model.
\end{enumerate}
We will denote this model structure $\textbf{sPr}_\tau(\mathscr{C})$.
\end{theorem}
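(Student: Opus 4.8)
The plan is to obtain this local structure as a left Bousfield localisation of the point-wise model structure already constructed, so that the underlying category, the cofibrations, and the characterisation of fibrations by the right lifting property are all inherited formally, and the only genuine work lies in identifying the localised weak equivalences with the local weak equivalences defined through sheafified homotopy groups. First I would record the two structural facts that make localisation available: the point-wise model structure on $\textbf{sPr}(\mathscr{C})$ is combinatorial and left proper. Combinatoriality holds because $\textbf{Fun}(\mathscr{C}^{op},\textbf{sSet})$ is locally presentable and the point-wise generating (trivial) cofibrations are obtained by applying the free-diagram functors at each object of $\mathscr{C}$ to the generating (trivial) cofibrations of $\textbf{sSet}_\text{Quillen}$; left properness descends object-wise from $\textbf{sSet}_\text{Quillen}$, where every object is cofibrant.

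Next I would fix a small set $S$ of morphisms encoding the topology $\tau$, namely (representatives of) the canonical maps associated to the covering data of $\tau$, and then invoke the standard existence theorem for left Bousfield localisations of a combinatorial, left proper model category (due to Smith; see also the accounts of Hirschhorn and Barwick). This produces a model structure $L_S\textbf{sPr}(\mathscr{C})$ on the same underlying category whose cofibrations coincide with the point-wise cofibrations, whose weak equivalences are the $S$-local equivalences, and whose fibrations are exactly the maps with the right lifting property against the trivial cofibrations. The theorem further guarantees that $L_S\textbf{sPr}(\mathscr{C})$ is again combinatorial, hence cofibrantly generated. At this point clauses (2) and (3) of the statement hold verbatim and cofibrant generation is established; everything has been reduced to clause (1).

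The substantive step, and the one I expect to be the main obstacle, is showing that the $S$-local equivalences coincide with the local weak equivalences of the stated definition. A subtlety that must be confronted here is the choice of $S$: localising only at Čech nerves of covers enforces Čech descent, whereas the definition of local weak equivalence given above (an isomorphism on $\widetilde{\pi_0}$ together with pullback squares on the higher $\pi_n$ after sheafification) is the hypercomplete notion, so $S$ must be taken to be the class of hypercovers in order for the two classes to match. For the inclusion of local weak equivalences into $S$-local equivalences I would verify that the $S$-local fibrant objects are precisely those satisfying descent, so that hypercover maps, and hence local weak equivalences, are inverted. For the reverse inclusion I would pass to stalks via a Boolean localisation of the topos of sheaves on $\mathrm{Ho}(\mathscr{C})$ (following Jardine's use of Barr's theorem), reducing the sheaf-theoretic conditions on $\widetilde{\pi_0}$ and the $\pi_n$ to a stalk-wise comparison in which the $S$-local condition forces the required equivalences. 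Assembling these two inclusions identifies the two classes of weak equivalences, and with clauses (2) and (3) already in hand this completes the proof.
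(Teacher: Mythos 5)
Your proposal is essentially correct, and it is worth noting at the outset that the paper gives no proof of this statement at all: the theorem is quoted from Jardine, and the only internal commentary is the remark immediately following the definition of hypercovers, which asserts precisely the description you adopt as your strategy, namely that $\textbf{sPr}_\tau(\mathscr{C})$ is the left Bousfield localisation of the point-wise structure at the hypercovers. Your route (Smith's existence theorem for localisations of the combinatorial, left proper point-wise structure, followed by an identification of the $S$-local equivalences with the local weak equivalences) is the Blander/Dugger--Hollander--Isaksen approach to the \emph{local projective} structure, and it arguably fits the statement as written better than the cited source does: Jardine's \S 3 concerns the injective structure, whose cofibrations are the monomorphisms, and is proved by a direct verification of the model axioms using a bounded-cofibration (cardinality) argument rather than by abstract localisation machinery. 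What each buys: Jardine's argument is self-contained and yields the injective structure with its larger class of cofibrations; yours is shorter given the modern localisation theorem and lands exactly on the point-wise cofibrations demanded by clause (3).

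The one place where your sketch compresses the real content is the identification of the two classes of weak equivalences. Knowing that the hypercovers are inverted in $L_S\textbf{sPr}(\mathscr{C})$ does not by itself show that every local weak equivalence is an $S$-local equivalence; you need the refinement result that any map inducing isomorphisms on sheaves of homotopy groups can be resolved by hypercovers (equivalently, that the $S$-local objects are exactly the presheaves satisfying descent for \emph{all} hypercovers, and that a local weak equivalence between such objects is an object-wise equivalence). This is the main theorem of Dugger--Hollander--Isaksen and is where the Boolean-localisation/Barr's-theorem technology you mention is actually consumed; it should be cited or carried out rather than asserted. Your observation that one must localise at all hypercovers, and not merely at \v{C}ech nerves, in order to match the hypercomplete definition of local weak equivalence used in the paper is correct and important.
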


\begin{definition}
A \textit{hypercover} of an object $X \in \mathscr{C}$ is the data of a simplicial presheaf $\mathscr{H}$ together with a morphism $\mathscr{H} \to X$ such that:
\begin{enumerate}
\item For any integer $n$, the presheaf $\mathscr{H}_n$ is a disjoint union of representable presheaves.
\item For any $n \geq 0$, the morphism of presheaves
$$\mathscr{H}_n \simeq \text{Hom}(\Delta^n,\mathscr{H}) \to \text{Hom}(\partial \Delta^n,\mathscr{H}) \times_{\text{Hom}(\partial \Delta^n,X)}\text{Hom}( \Delta^n,X)$$
induces an epimorphism after sheafification.
\end{enumerate}
\end{definition}

\begin{remark}\label{localise}
It can be shown that $\textbf{sPr}_\tau(\mathscr{C})$ is in fact the left Bousfield localisation of $\textbf{sPr}(\mathscr{C})$ with the point-wise model structure with respect to hypercovers.
\end{remark}

\subsection{Presheaves for the Weak and Strong Models}

We will begin by looking at presheaves over the weak and strong models.  This was developed in the cyclic case by Seteklev and {\O}stv{\ae}r in \cite{cyclicsheaves}, all results here can be obtained from the results presented there.

\begin{definition}
Let $\dG$ be a compact planar crossed simplicial group and $\mathscr{C}$ a small Grothendieck site.  A \textit{$\dG$-presheaf} on $\mathscr{C}$ is a functor $\mathcal{F} \colon \mathscr{C}^{op} \to \dG\textbf{-Set}$. We will denote by $\dG\textbf{-Pr}(\mathscr{C})$ the category of all $\dG$-presheaves on $\mathscr{C}$.
\end{definition}

From now on we will focus on equipping the category of $\dG$-presheaves with the strong model, however all of the results that follow also hold for the weak model.  Note that, as discussed in \cite{cyclicsheaves}, the category $\dG\textbf{-Pr}(\mathscr{C})$ is tensored, cotensored and enriched in $\dG\textbf{-Set}$.  As in the simplicial case, we can put a point-wise model structure on $ \dG\textbf{-Pr}(\mathscr{C})$ which does not look at all at the topology on $\mathscr{C}$, whose existence follows from the fact that $\dG \textbf{-Set}^s$ is cofibrantly generated.  For the cyclic case this is given as \cite[Theorem 4.2]{cyclicsheaves}.

\begin{proposition}
There is a cofibrantly generated model structure on $ \dG\textbf{-Pr}(\mathscr{C})$  called the \textit{projective point-wise model structure} where a map is a:
\begin{enumerate}
\item Weak equivalence if it is a point-wise weak equivalence in $\dG \textbf{-Set}^s$.
\item Fibration if it is a point-wise fibration in $\dG \textbf{-Set}^s$.
\item Cofibration if it has the left-lifting property with respect to the trivial fibrations.
\end{enumerate}
The generating set of cofibrations is given by $C \otimes i$ where $i$ is a generating cofibration of $\dG\textbf{-Set}^s$ and $C \in \mathscr{C}$.  We will denote this model $ \dG\textbf{-Pr}^s(\mathscr{C})$
\end{proposition}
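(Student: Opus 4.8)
The final statement asserts the existence of a cofibrantly generated projective point-wise model structure on $\dG\textbf{-Pr}(\mathscr{C}) = \textbf{Fun}(\mathscr{C}^{op}, \dG\textbf{-Set})$, where weak equivalences and fibrations are detected object-wise in the strong model $\dG\textbf{-Set}^s$. This is precisely the situation covered by the standard transfer theorem for projective model structures on diagram categories: given a cofibrantly generated model category $\mathscr{M}$ and a small indexing category $\mathscr{C}^{op}$, the functor category $\textbf{Fun}(\mathscr{C}^{op}, \mathscr{M})$ inherits a cofibrantly generated projective model structure with point-wise weak equivalences and point-wise fibrations. The plan is therefore to invoke this theorem with $\mathscr{M} = \dG\textbf{-Set}^s$, after verifying its hypotheses.

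\emph{First I would record the input.} The strong model structure $\dG\textbf{-Set}^s$ is cofibrantly generated; this was established in Proposition~\ref{cycNmodel} and its dihedral and quaternionic analogues, each of which produces the model via Spali\'{n}ski's Theorem~\ref{assumptionsformodel} together with an explicit generating set of cofibrations. Let $I$ and $J$ denote the generating cofibrations and generating trivial cofibrations of $\dG\textbf{-Set}^s$. The category $\mathscr{C}$ is a small Grothendieck site, so $\mathscr{C}^{op}$ is a small category, and $\dG\textbf{-Set}^s$ is complete and cocomplete since $\dG\textbf{-Set}$ is a presheaf category (all limits and colimits are computed level-wise). Consequently $\dG\textbf{-Pr}(\mathscr{C})$ is itself complete and cocomplete.

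\emph{Next I would apply the transfer.} The candidate generating cofibrations are the maps $C \otimes i$, where $i \in I$ and $C \in \mathscr{C}$, and here $C \otimes (-)$ denotes the left adjoint to evaluation at $C$, which exists because $\dG\textbf{-Pr}(\mathscr{C})$ is tensored over $\dG\textbf{-Set}$ (as noted in the excerpt, following \cite{cyclicsheaves}). The candidate generating trivial cofibrations are the maps $C \otimes j$ for $j \in J$. The key recognition step is to check that the transferred classes satisfy the hypotheses of Kan's recognition theorem for cofibrantly generated model structures: one must verify the smallness conditions on the domains of $C \otimes I$ and $C \otimes J$, and identify the class of $(C \otimes J)$-cell complexes with point-wise trivial cofibrations. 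Smallness is inherited from $\dG\textbf{-Set}^s$ because colimits in $\dG\textbf{-Pr}(\mathscr{C})$ are computed point-wise, so the evaluation functor at each $C$ preserves filtered colimits, and the adjunction $(C \otimes -) \dashv \mathrm{ev}_C$ transports the relevant smallness. The identification of the acyclicity of cells reduces to the fact that a point-wise weak equivalence is detected by evaluation at each object of $\mathscr{C}$, and evaluation at $C$ sends $C \otimes j$ to a coproduct of copies of $j$ together with identities, each of which is a trivial cofibration in $\dG\textbf{-Set}^s$.

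\emph{The main obstacle.} The only genuine point requiring care is the acyclicity condition in the recognition theorem, namely that every relative $(C \otimes J)$-cell complex is a point-wise weak equivalence in $\dG\textbf{-Set}^s$. This is where one uses that evaluation functors preserve the relevant colimits and that the strong weak equivalences of $\dG\textbf{-Set}^s$, defined via the fixed-point functors $\Phi_{Nr}$ (and $\Gamma_{Nr}$, $\Gamma^q_{Mr}$ in the dihedral and quaternionic cases), are stable under the transfinite compositions and pushouts that build cell complexes; this stability holds because those fixed-point functors are built from subdivision and limits that commute with the filtered colimits appearing in cell attachments. Since this general transfer is completely standard for diagram categories over any cofibrantly generated model category, and the cyclic instance is already \cite[Theorem 4.2]{cyclicsheaves}, the remaining cases follow \emph{mutatis mutandis} once the generating sets from the strong model structures of Section~\ref{models} are substituted.
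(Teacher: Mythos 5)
Your proposal is correct and follows essentially the same route as the paper, which simply observes that the projective point-wise structure exists because $\dG\textbf{-Set}^s$ is cofibrantly generated (so the standard transfer to the diagram category $\textbf{Fun}(\mathscr{C}^{op},\dG\textbf{-Set}^s)$ applies) and cites the cyclic case from Seteklev and {\O}stv{\ae}r. Your write-up just makes explicit the smallness and acyclicity checks that the paper leaves implicit.
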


We will now construct the local model structure on the $\dG$-presheaves.  To do this we will take the approach of Remark \ref{localise} and define the strong descent condition for hypercovers of cyclic presheaves, and then localise at this class of maps (see \cite[Theorem 5.3]{cyclicsheaves}).

\begin{definition}
A fibrant object $\mathscr{F}$ of $ \dG\textbf{-Pr}^s(\mathscr{C})$ with the point-wise model structure satisfies \textit{strong descent} if for every hypercover $\mathscr{H} \to X$ the induced map of $\dG$-sets
$$\mathscr{F}(X) \to\text{Holim}_{[n] \in \Delta}\mathscr{F}(\mathscr{H}_n)$$
is a weak equivalence in $\dG\textbf{-Set}^s$.
\end{definition}

\begin{definition}
There is a cofibrantly generated \textit{strong local model structure} on $ \dG\textbf{-Pr}^s(\mathscr{C})$ given as the left Bousfield localisation of the point-wise model at the class of hypercovers.  We will denote this model $ \dG\textbf{-Pr}_\tau(\mathscr{C})^s$
\end{definition}

We finish this section by outlining how we can explicitly describe the weak equivalences in these models. We will look at the construction for the strong cyclic case, with the other cases being the same with the correct replacement of subdivision functors as discussed in Section \ref{models}. First of all note that the functors $\Phi_r \colon \dC \textbf{-Set} \to \textbf{sSet}$ induce a family of functors $\Phi_{r \ast} \colon \dC \textbf{-Set}_\ast \to \textbf{sSet}_\ast$ between the pointed versions of these categories.  For $X$ a fibrant cyclic set and $x \in X_0$ we define $\pi^r_n(X,x) := \pi_n \Phi_{r \ast} (X,x)$.

\begin{definition}
A map $f \colon \mathscr{F} \to \mathscr{F}'$ in $\dC \textbf{-Pr}(\mathscr{C})$ is a \textit{strong local weak equivalence} if for all $r \geq 1$:
\begin{itemize}
\item the induced map $\widetilde{\pi^r_0}\mathscr{F} \to \widetilde{\pi^r_0}\mathscr{F}'$ is an isomorphism of sheaves.
\item the induced squares
$$\xymatrix{\pi^r_n F \ar[r] \ar[d] &\ar[d] \pi^r_n F' \\
\Phi_r F_0 \ar[r] & \Phi_r F'_0}$$
are pullbacks after sheafification.
\end{itemize}
\end{definition}

\begin{remark}
From the definition of $\pi^r_n(X,x)$ it is easy to see that a map $f \colon \mathscr{F} \to \mathscr{F}'$ in $\dC \textbf{-Pr}(\mathscr{C})$ is a strong local weak equivalence if and only if $\Phi_r(f)$ is a local weak equivalence of simplicial presheaves for all $r \geq 1$.  This means that we could also reformulate Theorem \ref{assumptionsformodel} for presheaf categories to obtain the results in this section.
\end{remark}

\subsection{Presheaves for the Coupled Model}

We are now interested in presheaves valued in coupled categories.  We will consider this from two different viewpoints. Let $\mathscr{X}$ be a simplicial Grothendieck topos. We define two categories
\begin{align*}
\mathbb{P}(\mathscr{X})&:= \textbf{Fun}(\mathscr{X}^{op},\mathscr{C}_F \mathscr{D})\\
\mathbb{Q}(\mathscr{X})&:= \textbf{Fun}(\mathscr{X}^{op},\mathscr{C})_{F^\ast}\textbf{Fun}(\mathscr{X}^{op},\mathscr{D})
\end{align*}
where $F^\ast \colon \textbf{Fun}(\mathscr{X}^{op},\mathscr{C}) \to \textbf{Fun}(\mathscr{X}^{op},\mathscr{D})$ is the induced functor on the presheaf categories by composition with $F$.

\begin{lemma}\label{reedylemma}
If the functor $F$ is Reedy admissible, then so is $F^\ast$ between the point-wise model structures on the presheaf categories.
\end{lemma}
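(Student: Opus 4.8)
The plan is to verify the two defining conditions of Reedy admissibility for $F^\ast$ directly and objectwise, exploiting that in the point-wise (projective) model structures the weak equivalences, fibrations, colimits, and the functor $F^\ast$ itself are all computed one object of $\mathscr{X}$ at a time. First I would dispose of the colimit-preservation clause. Since $F^\ast$ is post-composition with $F$, for any diagram $\mathcal{G}_\bullet$ in $\textbf{Fun}(\mathscr{X}^{op},\mathscr{C})$ and any $X \in \mathscr{X}$ we have
\[(F^\ast \operatorname{colim} \mathcal{G}_\bullet)(X) = F\big((\operatorname{colim}\mathcal{G}_\bullet)(X)\big) = F\big(\operatorname{colim}\mathcal{G}_\bullet(X)\big).\]
Colimits in functor categories are computed objectwise and $F$ preserves colimits by hypothesis, so this equals $\operatorname{colim} F(\mathcal{G}_\bullet(X)) = (\operatorname{colim} F^\ast\mathcal{G}_\bullet)(X)$; hence $F^\ast$ preserves colimits.

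The essential ingredient for the second clause is the standard fact that each evaluation functor $\operatorname{ev}_X \colon \textbf{Fun}(\mathscr{X}^{op},\mathscr{C}) \to \mathscr{C}$ carries projective (trivial) cofibrations to (trivial) cofibrations, and likewise for $\mathscr{D}$. I would record this first. Writing $F_Y$ for the left adjoint of $\operatorname{ev}_Y$, the generating projective (trivial) cofibrations have the form $F_Y(i)$ with $i$ a generating (trivial) cofibration of $\mathscr{C}$, and $\operatorname{ev}_X(F_Y(i)) = \coprod_{\mathscr{X}(X,Y)} i$ is a coproduct of copies of $i$, hence a (trivial) cofibration. Since $\operatorname{ev}_X$ preserves all colimits and every projective (trivial) cofibration is a retract of a transfinite composite of pushouts of the generators, $\operatorname{ev}_X$ takes every projective (trivial) cofibration to a (trivial) cofibration; in particular projective trivial cofibrations are objectwise trivial cofibrations.

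With these in hand the verification is an objectwise reduction to the Reedy admissibility of $F$. Suppose given a morphism $(\mathcal{A},\mathcal{B},F^\ast\mathcal{A}\to\mathcal{B}) \to (\mathcal{A}',\mathcal{B}',F^\ast\mathcal{A}'\to\mathcal{B}')$ with $\mathcal{A}\to\mathcal{A}'$ a projective trivial cofibration and $F^\ast\mathcal{A}'\bigcup_{F^\ast\mathcal{A}}\mathcal{B}\to\mathcal{B}'$ a projective trivial cofibration. Fix $X\in\mathscr{X}$. By the previous paragraph $\mathcal{A}(X)\to\mathcal{A}'(X)$ is a trivial cofibration in $\mathscr{C}$. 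Because both $F^\ast$ and the formation of pushouts commute with $\operatorname{ev}_X$, the presheaf $F^\ast\mathcal{A}'\bigcup_{F^\ast\mathcal{A}}\mathcal{B}$ evaluates at $X$ to $F(\mathcal{A}'(X))\bigcup_{F(\mathcal{A}(X))}\mathcal{B}(X)$, so the map $F(\mathcal{A}'(X))\bigcup_{F(\mathcal{A}(X))}\mathcal{B}(X)\to\mathcal{B}'(X)$ is a trivial cofibration in $\mathscr{D}$. This is precisely the hypothesis of Reedy admissibility of $F$ applied to the morphism obtained by evaluating our coupled morphism at $X$, so $\mathcal{B}(X)\to\mathcal{B}'(X)$ is a weak equivalence in $\mathscr{D}$. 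As $X$ was arbitrary, $\mathcal{B}\to\mathcal{B}'$ is an objectwise weak equivalence, i.e. a weak equivalence in the projective structure on $\textbf{Fun}(\mathscr{X}^{op},\mathscr{D})$.

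I expect the only genuinely nontrivial step to be the evaluation lemma of the third paragraph, which relies on the projective structures being cofibrantly generated (as they are in all the cases of interest here); the remainder is formal bookkeeping with objectwise colimits. The main practical care needed is in stating cleanly the compatibilities $\operatorname{ev}_X\circ F^\ast = F\circ\operatorname{ev}_X$ and the preservation of the relevant pushout by $\operatorname{ev}_X$, so that the objectwise data assembles into exactly the input required by Reedy admissibility of $F$.
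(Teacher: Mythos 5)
Your proof is correct, and its core --- evaluate the coupled morphism at each $X\in\mathscr{X}$, use that $F^\ast$ and the relevant pushout commute with evaluation, and feed the resulting objectwise data into the Reedy admissibility of $F$ --- is exactly the reduction the paper performs. The one genuine difference is how the two arguments arrange for the hypothesised trivial cofibrations to be \emph{objectwise} trivial cofibrations. The paper sidesteps the issue by declaring that ``it is enough to prove this for the injective point-wise model,'' where trivial cofibrations are objectwise by definition; this implicitly relies on the fact that projective trivial cofibrations form a subclass of the injective ones while the weak equivalences coincide. You instead stay in the projective structure named in the statement and supply the missing ingredient explicitly: the evaluation functors $\operatorname{ev}_X$ send generating projective (trivial) cofibrations $F_Y(i)$ to coproducts of copies of $i$, hence send all projective (trivial) cofibrations to objectwise ones. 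Your route buys a proof of the lemma as literally stated for the point-wise projective structures used elsewhere in the paper, at the cost of invoking cofibrant generation; the paper's route is shorter but leaves the comparison of cofibration classes unstated. You also verify the colimit-preservation clause of Reedy admissibility for $F^\ast$, which the paper's proof omits entirely (it is immediate, as you note, since colimits in functor categories are computed objectwise, but it is part of the definition being checked).
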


\begin{proof}
As we will only be concerned with the resulting homotopy theory, it is enough to prove this for the injective point-wise model.  Assume that $\mathbb{A} \to \mathbb{A}'$ is a trivial cofibration in $\textbf{Fun}(\mathscr{X}^{op},\mathscr{C})$ and $F^\ast \mathbb{A}' \bigcup_{F^\ast \mathbb{A}} \mathbb{B} \to \mathbb{B}$ a trivial cofibration in  $\textbf{Fun}(\mathscr{X}^{op},\mathscr{D})$.  We need to show that $\mathbb{B} \to \mathbb{B}'$ is a weak equivalence in $\textbf{Fun}(\mathscr{X}^{op},\mathscr{D})$.  Recall that in the injective point-wise model structure, $\mathbb{A} \to \mathbb{A}'$ is a trivial cofibration if and only if $\forall x \in \mathscr{X}$ the induced map $\mathbb{A}(x) \to \mathbb{A}'(x)$ is a trivial cofibration in $\mathscr{C}$.  Likewise we consider $\forall x \in \mathscr{X}$ the maps
$$F^\ast \mathbb{A}' (x) \bigcup_{F^\ast \mathbb{A}(x)} \mathbb{B}(x) \to \mathbb{B}(x)$$
be trivial cofibrations in $\mathscr{D}$.  However note that $F^\ast \mathbb{A}'(x) = F(\mathbb{A}(x))$.  By assumption on $F$ it then follows that $\forall x \in \mathscr{X}$ the maps $\mathbb{B}(x) \to \mathbb{B}'(x)$ are weak equivalences in $\mathscr{D}$.  Therefore $\mathbb{B} \to \mathbb{B}'$ is a weak equivalence in the point-wise model structure for $\textbf{Fun}(\mathscr{X}^{op},\mathscr{D})$.
\end{proof}

\begin{proposition}\label{mainthrm}
The categories $\mathbb{P}(\mathscr{X})$ and $\mathbb{Q}(\mathscr{X})$ are isomorphic.  Moreover, there is a Quillen equivalence between the point-wise model structures. 
\end{proposition}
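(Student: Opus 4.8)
The plan is to produce an explicit isomorphism of underlying categories $\Theta \colon \mathbb{P}(\mathscr{X}) \xrightarrow{\sim} \mathbb{Q}(\mathscr{X})$ and then to observe that it carries one point-wise model structure onto the other, so that it is in fact an isomorphism of model categories and \emph{a fortiori} a Quillen equivalence. First I would unwind both definitions. An object of $\mathbb{P}(\mathscr{X})$ is a functor $G \colon \mathscr{X}^{op} \to \mathscr{C}_F\mathscr{D}$; evaluating at each $x \in \mathscr{X}$ gives a triple $(A_x, B_x, FA_x \to B_x)$, and functoriality says precisely that $x \mapsto A_x$ and $x \mapsto B_x$ assemble into functors $\mathbb{A}, \mathbb{B} \colon \mathscr{X}^{op} \to \mathscr{C}, \mathscr{D}$ together with structure maps $FA_x \to B_x$ making the coupled-category squares commute for every morphism of $\mathscr{X}$. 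Since composition with $F$ computes $F^\ast\mathbb{A}$ objectwise, i.e.\ $F^\ast\mathbb{A}(x) = F(\mathbb{A}(x)) = FA_x$, those commuting squares are exactly the naturality of a single transformation $F^\ast \mathbb{A} \to \mathbb{B}$. Thus $G$ is the same datum as the triple $(\mathbb{A}, \mathbb{B}, F^\ast\mathbb{A} \to \mathbb{B})$, which is an object of $\mathbb{Q}(\mathscr{X})$. I would define $\Theta$ by this reassociation on objects and by the analogous reshuffling on morphisms, and check that the evident inverse $\Theta^{-1}$ is strictly inverse; both verifications are purely formal.

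For the model-categorical claim, I would first invoke Lemma \ref{reedylemma} to know that $F^\ast$ is Reedy admissible, so that Theorem \ref{coupledmodeltheorem} does equip $\mathbb{Q}(\mathscr{X})$ with a coupled model structure. The point is then that the two point-wise model structures are literally identified by $\Theta$. In $\mathbb{P}(\mathscr{X})$ the point-wise weak equivalences (and fibrations) are those maps which are weak equivalences (fibrations) in $\mathscr{C}_F\mathscr{D}$ at every $x$, and by Theorem \ref{coupledmodeltheorem} these are detected separately on the $\mathscr{C}$- and $\mathscr{D}$-components; on the $\mathbb{Q}(\mathscr{X})$ side the same class is, by the definition of the coupled model structure on the two presheaf categories, exactly the maps that are point-wise weak equivalences (fibrations) in each component. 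So $\Theta$ matches weak equivalences and fibrations, and consequently cofibrations. For the cofibration comparison it is worth recording the key computation that colimits in functor categories are formed objectwise, so that $(F^\ast\mathbb{A}' \cup_{F^\ast\mathbb{A}}\mathbb{B})(x) = F(\mathbb{A}'(x)) \cup_{F(\mathbb{A}(x))}\mathbb{B}(x)$; this makes the cofibration condition of Theorem \ref{coupledmodeltheorem} agree on the two sides whether one uses the injective or the projective point-wise model. Having matched all three classes of maps, $\Theta$ is an isomorphism of model categories, and I would conclude that $(\Theta, \Theta^{-1})$ is a (trivial) Quillen equivalence.

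The genuinely substantive content is concentrated in the single identity $F^\ast\mathbb{A}(x) = F(\mathbb{A}(x))$ together with the objectwise formation of limits and colimits in presheaf categories; everything else is bookkeeping. The main obstacle I anticipate is therefore not any deep homotopical input but rather being careful that the two \emph{a priori} different constructions---forming the coupled category first and then taking presheaves, versus taking presheaves first and then coupling along the induced $F^\ast$---really do coincide on structure maps as well as on underlying objects, i.e.\ that the naturality squares demanded by $\mathbb{Q}(\mathscr{X})$ are exactly those demanded pointwise by $\mathbb{P}(\mathscr{X})$. Once this compatibility is pinned down, the model-structure comparison is immediate from the pointwise characterisations.
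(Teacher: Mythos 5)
Your proposal is correct and follows essentially the same route as the paper: both unwind the two definitions, use the identity $F^\ast\mathbb{A}(x) = F(\mathbb{A}(x))$ to reassociate a presheaf of triples into a triple of presheaves, and then match the point-wise weak equivalences and fibrations across the comparison functor. If anything you are more careful than the paper's own argument, which only asserts that its functor $\alpha\colon \mathbb{Q}(\mathscr{X})\to\mathbb{P}(\mathscr{X})$ is fully faithful and essentially surjective (an equivalence), whereas you exhibit a strict inverse and also check the cofibration classes via the objectwise computation of the pushout $F^\ast\mathbb{A}'\cup_{F^\ast\mathbb{A}}\mathbb{B}$, which is what the stated isomorphism of categories actually requires.
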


\begin{proof}
An object of $\mathbb{Q}(\mathscr{X})$ is of the form $p = (\mathbb{A} \colon \mathscr{X}^{op} \to \mathscr{C}, \mathbb{B} \colon \mathscr{X}^{op} \to \mathscr{D}, F^\ast \mathbb{A} \to \mathbb{B})$.  For $x \in \mathscr{X}$ we can look at $p(x) = (\mathbb{A}(x), \mathbb{B}(x), F(\mathbb{A}(x)) \to \mathbb{B}(x))$ which can be trivially viewed as an object of $\mathscr{C}_F\mathscr{D}$.  This assignment for each $x \in \mathscr{X}$ allows us to construct a functor $\alpha \colon \mathbb{Q}(\mathscr{X}) \to \mathbb{P}(\mathscr{X})$.  This functor can be seen as being fully faithful and essentially surjective due to its constructive nature.  The Quillen equivalence follows from comparing the classes of weak equivalences and fibrations in each category and showing that they coincide with the functor $\alpha$.
\end{proof}

Taking into account Proposition \ref{mainthrm}, we arrive at the following definition for presheaves with values in the coupled model structure.  We begin as always with the cyclic case.

\begin{definition}
A \textit{$\dC_N$-coupled presheaf} on a small Grothendieck site $\mathscr{C}$ is an object of
$$\textbf{sPr}(\mathscr{C})_{(\nabla^N)^\ast}\dC_N\textbf{-Pr}^s(\mathscr{C})$$
We will denote by $\dC_N\textbf{-Pr}^c(\mathscr{C})$ the category of all $\dC_N$ coupled presheaves on $\mathscr{C}$.
\end{definition}

Now we wish to equip this category with a local model structure.  To do this we will just change the presheaf categories appearing in the definition by their local models.  We list the following as a corollary since the existence of the model structures follows from Theorem \ref{coupledmodeltheorem} and a slight modification of Lemma \ref{reedylemma}.

\begin{corollary}
There is a cofibrantly generated model structure on $\dC_N\textbf{-Pr}(\mathscr{C})$ (resp., $\dD_N\textbf{-Pr}(\mathscr{C})$, $\dQ_M\textbf{-Pr}(\mathscr{C})$) defined as
\begin{align*}
& \textbf{sPr}_\tau(\mathscr{C})_{(\nabla^N)^\ast}\dC_N\textbf{-Pr}_\tau^s(\mathscr{C})\\
\text{(resp.,} \quad & \dR \textbf{-Pr}_\tau^s(\mathscr{C})_{(\Xi^{N})^{\ast}}\dD_N\textbf{-Pr}^s_\tau(\mathscr{C}) \text{)}\\
\text{(resp.,} \quad &\dR \textbf{-Pr}_\tau^s(\mathscr{C})_{(\Psi^{M})^{\ast}}\dQ_M\textbf{-Pr}^s_\tau(\mathscr{C}) \text{)}
\end{align*}
We will denote this model by $ \dC_N\textbf{-Pr}_\tau^c(\mathscr{C})$ (resp., $ \dD_N\textbf{-Pr}_\tau^c(\mathscr{C})$, $ \dQ_M\textbf{-Pr}_\tau^c(\mathscr{C})$) and call it the \textit{coupled local model structure}.
\end{corollary}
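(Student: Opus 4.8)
The plan is to exhibit each of the three coupled local models as a direct instance of Theorem~\ref{coupledmodeltheorem}. In the cyclic case I take the coupling data to be $\textbf{sPr}_\tau(\mathscr{C})$, $\dC_N\textbf{-Pr}^s_\tau(\mathscr{C})$ and the functor $(\nabla^N)^\ast$ obtained by postcomposition with $\nabla^N$; the dihedral and quaternionic cases replace this by $\dR\textbf{-Pr}^s_\tau(\mathscr{C})$, $\dD_N\textbf{-Pr}^s_\tau(\mathscr{C})$, $(\Xi^N)^\ast$ and by $\dR\textbf{-Pr}^s_\tau(\mathscr{C})$, $\dQ_M\textbf{-Pr}^s_\tau(\mathscr{C})$, $(\Psi^M)^\ast$ respectively. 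All three run identically, so I would carry out the cyclic case and note that the others follow verbatim after the obvious substitutions. The only thing to check is that the relevant induced functor is Reedy admissible for the \emph{local} model structures; granting this, Theorem~\ref{coupledmodeltheorem} supplies the model structure, and cofibrant generation follows from the lemma on $A_F B$ above, since both $\textbf{sPr}_\tau(\mathscr{C})$ and $\dC_N\textbf{-Pr}^s_\tau(\mathscr{C})$ are cofibrantly generated.

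The promised ``slight modification'' of Lemma~\ref{reedylemma} is precisely that the objectwise detection of weak equivalences used there is no longer available, since local weak equivalences are not detected objectwise. Colimit preservation is unaffected: $(\nabla^N)^\ast$ is a left adjoint and the local and point-wise models share their underlying category, so colimits are computed as before. For the Reedy condition I would argue structurally rather than objectwise. Given a morphism of triples with $A \to A'$ a local trivial cofibration in $\textbf{sPr}_\tau(\mathscr{C})$ and $(\nabla^N)^\ast A' \bigcup_{(\nabla^N)^\ast A} B \to B'$ a local trivial cofibration in $\dC_N\textbf{-Pr}^s_\tau(\mathscr{C})$, factor the map in question as
$$B \longrightarrow (\nabla^N)^\ast A' \bigcup_{(\nabla^N)^\ast A} B \longrightarrow B'.$$
The second arrow is a local weak equivalence by hypothesis, so it suffices to show the first is, and the first is the pushout of $(\nabla^N)^\ast A \to (\nabla^N)^\ast A'$ along $(\nabla^N)^\ast A \to B$.

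This reduces the entire statement to the claim that $(\nabla^N)^\ast$ carries local trivial cofibrations to strong local trivial cofibrations: once this is known, the displayed first arrow is a pushout of a trivial cofibration, hence itself a trivial cofibration, and the composite $B \to B'$ is a local weak equivalence. Working, as in Lemma~\ref{reedylemma}, with the injective point-wise model (which has the same homotopy theory), preservation of cofibrations is inherited from the point-wise construction of $\nabla^N$ exactly as in Lemma~\ref{proof:reedy}, where $A \to A'$ a cofibration forces $|\nabla^N A|_{\mathfrak{C}_N} \to |\nabla^N A'|_{\mathfrak{C}_N}$ to be a closed inclusion, and cofibrations are in any case unchanged by the Bousfield localisation. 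The genuine content, and the step I expect to be the main obstacle, is that $(\nabla^N)^\ast$ preserves \emph{local} weak equivalences, i.e.\ is compatible with the topology $\tau$.

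I would verify this last point using the description of strong local weak equivalences through the subdivision functors $\Phi_{Nr}$ and sheafification: it suffices that $\Phi_{Nr}\big((\nabla^N)^\ast f\big)$ be a local weak equivalence of simplicial presheaves whenever $f$ is, which follows because $\nabla^N$ and each $\Phi_{Nr}$ are built from colimits and therefore commute with the homotopy colimits computing descent, so that hypercovers are sent to strong local weak equivalences. The dihedral and quaternionic cases are identical once $\Phi_{Nr}$ is replaced by the pair $(\Phi_{Nr},\Gamma_{Nr})$, respectively $(\Phi_{Mr},\Gamma^q_{Mr})$, and simplicial descent by $\dR$-valued descent, which completes the argument.
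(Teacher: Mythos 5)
Your overall strategy --- feed the localised presheaf categories and the induced coupling functors into Theorem \ref{coupledmodeltheorem}, isolate Reedy admissibility of $(\nabla^N)^\ast$, $(\Xi^N)^\ast$, $(\Psi^M)^\ast$ for the \emph{local} models as the one point needing work, and factor $B \to B'$ through the pushout --- is exactly what the paper intends by ``a slight modification of Lemma \ref{reedylemma}'', and the factorisation is the same skeleton as Lemma \ref{proof:reedy}. The gap is in how you discharge the remaining step. You reduce everything to the claim that $(\nabla^N)^\ast$ sends local trivial cofibrations to strong local trivial cofibrations, i.e.\ that it is left Quillen for the localised models. That overshoots what is available: nothing in the paper (or in Blumberg's setup) shows that $\nabla$ preserves \emph{cofibrations} of the strong model, and the entire point of the Reedy admissibility formalism is to avoid having to prove such a statement. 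Your justification --- that $A \to A'$ a cofibration forces $|\nabla^N A|_{\mathfrak{C}_N} \to |\nabla^N A'|_{\mathfrak{C}_N}$ to be a closed inclusion --- establishes only a point-set property of realisations, which is what Lemma \ref{proof:reedy} uses to commute fixed points past the pushout via Lemma \ref{commute}; it says nothing about the lifting property that defines strong cofibrations. What your factorisation actually requires is the weaker statement that the pushout of $(\nabla^N)^\ast\mathbb{A} \to (\nabla^N)^\ast\mathbb{A}'$ along $(\nabla^N)^\ast\mathbb{A} \to \mathbb{B}$ is a local weak equivalence, and that must be proved directly, by running the realisation/fixed-point computation of Lemma \ref{proof:reedy} objectwise (where presheaf pushouts are computed) and then passing to the sheafified homotopy groups $\widetilde{\pi^r_n}$ that detect strong local weak equivalences.

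The second weak point is the claim that $(\nabla^N)^\ast$ preserves local weak equivalences ``because $\nabla^N$ and each $\Phi_{Nr}$ are built from colimits and therefore commute with the homotopy colimits computing descent''. Descent is a homotopy \emph{limit} condition, $\mathscr{F}(X) \to \text{Holim}_{[n]}\mathscr{F}(\mathscr{H}_n)$, so colimit preservation buys you nothing here. The correct route is the universal property of left Bousfield localisation: one checks that the induced functor carries the localising class (the hypercovers) to strong local weak equivalences, and for that you again need either a left Quillen functor on the unlocalised models (unavailable) or a direct argument with the $\Phi_{Nr}$ and sheafification. So the skeleton is right and matches the paper's one-line indication, but the two load-bearing claims are asserted rather than proved, and the first is stated in a form that is stronger than anything the surrounding machinery can deliver.
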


\section{Equivariant Sheaf Cohomology Theories}\label{sec5}

We finish this paper by discussing (equivariant) cohomology theories.  Recall from \cite{MR3285853} that for a site $\mathscr{C}$ the homotopy category $\text{Ho}(\textbf{sPr}_\tau(\mathscr{C}))$ is known as the \emph{category of $\infty$-stacks on $\mathscr{C}$}.  For a given model $m$ (weak, strong, coupled) we know that by Proposition \ref{gobject}, an object of $\text{Ho}(\dG \textbf{-Pr}^m_\tau(\mathscr{C}))$ can be seen as an $\infty$-stack on $\mathscr{C}$ along with some additional structure encoding the action of $\mathfrak{G}$.  Due to this reasoning, we make the following definition.

\begin{definition}
Let $\dG$ be a planar crossed simplicial group and $m$ one of the three model structures.  We will call  $\text{Ho}(\dG \textbf{-Pr}^m_\tau(\mathscr{C}))$ the \emph{category of $\dG^m$-equivariant $\infty$-stacks on $\mathscr{C}$}. 
\end{definition}

Of course, one can explore analogues of all homotopical algebraic constructs in this equivariant setting.  We choose to focus on the use of stacks in non-abelian cohomology theories. An overview of this for the non-equivariant case can be found in the article of To\"{e}n \cite{toenab}.  We will specialise to the setting of sheaf cohomology of sites, the following can be found in the book of Jardine \cite{MR3309296}.

\begin{definition}\label{ex:sheaf2}
Let $\mathscr{C}$ be a site and $\mathbb{H}(\mathscr{C}) := \text{Ho}(\textbf{sPr}_\tau\left(\mathscr{C}\right))$ the corresponding category of $\infty$-stacks.  Let $A$ be a sheaf of abelian groups. We then have $H^n(\mathscr{C};A) \simeq [\ast,K(A,n)]$ where $[-,-]$ denotes the set of morphisms in $\mathbb{H}(\mathscr{C})$ and $K(A,n)$ is an Eilenberg-MacLane object.
\end{definition}

\begin{example}
In the case that $\mathscr{C} \simeq \text{Op}(X)$, the site of open subsets of some topological space $X$, we retrieve the sheaf cohomology of $X$. 
\end{example}

We will now describe how the above construction works, as it will play a key role in the equivariant case. We  can see the above isomorphism as a chain of natural isomorphisms as follows:
\begin{align}
[\ast, K(A,n)] & \simeq [\tilde{\mathbb{Z}}\ast,K(A,n)] & \text{(simplicial abelian sheaves)}\label{eq:illusie}\\
& \simeq [\tilde{\mathbb{Z}}[0], A[-n]] & \text{(sheaves of chain complexes)}\label{eq:dold} \\
& \simeq\text{Ext}^n_\mathscr{C}(\mathbb{Z},A)\label{eq:final} \\
& \simeq H^n(\mathscr{C};A)\label{eq:final2}. 
\end{align}

Isomorphism \ref{eq:illusie} takes the setting from the homotopy category of simplicial presheaves to the corresponding homotopy category of simplicial abelian presheaves.  This step is an isomorphism because of the following result proved by Osdol \cite{MR0478160}.

\begin{proposition}[Illusie Conjecture]\label{prop:illusie}
The free simplicial abelian presheaf functor $X \mapsto \mathbb{Z}X$ considered as a map $\textbf{sPr}_\tau(\mathscr{C}) \to \textbf{sPr}_\tau(\mathscr{C})$ preserves local equivalences.
\end{proposition}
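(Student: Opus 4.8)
The statement is the so-called Illusie Conjecture: the free simplicial abelian group functor $X \mapsto \mathbb{Z}X$ preserves local weak equivalences of simplicial presheaves. My plan is to reduce the global statement about the model category $\textbf{sPr}_\tau(\mathscr{C})$ to a purely local (stalkwise or sectionwise-after-sheafification) statement, where the corresponding fact for simplicial sets is classical. The key observation is that a local weak equivalence is, by the definition given earlier in the excerpt, detected by the sheafified homotopy invariants $\widetilde{\pi_0}$ and the sheafified homotopy-group pullback squares. These invariants are computed locally, so it suffices to understand what $\mathbb{Z}(-)$ does to these sheaves.

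First I would recall the analogous statement for simplicial sets: if $f \colon S \to T$ is a weak equivalence of simplicial sets, then $\mathbb{Z}S \to \mathbb{Z}T$ is a weak equivalence of simplicial abelian groups. This is standard --- it follows because $H_\ast(\mathbb{Z}S) \cong \widetilde{H}_\ast(S;\mathbb{Z})$ is the reduced integral homology, which is a homotopy invariant, combined with the fact that for simplicial abelian groups the Moore-complex homology computes the homotopy groups (Dold--Kan). Thus the problem is to propagate this known pointwise-homotopical fact through the sheafification that distinguishes \emph{local} from \emph{pointwise} equivalence.

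The main step is to invoke the Boolean-localisation / stalk technology for Grothendieck topoi: a map of simplicial presheaves is a local weak equivalence if and only if it becomes a sectionwise (pointwise) weak equivalence after applying a suitable Boolean localisation $\mathscr{C} \to \mathscr{B}$ of the topos, or equivalently if and only if it induces weak equivalences on all stalks when enough points exist. Since $\mathbb{Z}(-)$ commutes with the inverse-image functors of geometric morphisms (it is defined objectwise and these functors are exact, preserving the relevant colimits), the free abelian functor commutes with passage to stalks/Boolean cover. Concretely, if $f$ is a local equivalence then its image under the Boolean localisation is a sectionwise equivalence, whence $\mathbb{Z}f$ is a sectionwise equivalence over the localisation by the simplicial-set case above, and this sectionwise equivalence over the Boolean cover is exactly the assertion that $\mathbb{Z}f$ is a local equivalence back downstairs.

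The step I expect to be the main obstacle is the commutation of $\mathbb{Z}(-)$ with sheafification and with the inverse-image functors: one must verify that $\widetilde{\pi_n}(\mathbb{Z}\mathscr{F})$ can be identified with the sheafified reduced homology built from $\widetilde{\pi_n}\mathscr{F}$, which requires that sheafification (an exact left adjoint) interact correctly with the free-abelian construction and with the Moore complex. This is where the exactness of the stalk functors (or the Boolean localisation) does the real work, and it is the point at which the argument of Osdol, cited in the excerpt, is genuinely needed rather than formal. Once that commutation is in hand, the result follows by assembling the local-to-pointwise reduction with the classical simplicial-set statement.
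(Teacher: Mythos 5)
The paper does not supply an argument here at all: Proposition~\ref{prop:illusie} is quoted from van Osdol, whose proof is a homological one carried out with the machinery of simplicial homotopy in exact categories (a local comparison of homotopy and homology sheaves), not the Boolean-localisation reduction you describe. Your strategy is the one Jardine later introduced, and it is a legitimate route; in the case where the topos has enough points it is essentially complete, since local weak equivalences are exactly stalkwise weak equivalences, stalks are filtered colimits of sections, $\mathbb{Z}(-)$ commutes with filtered colimits, and the classical fact that $\mathbb{Z}(-)$ preserves weak equivalences of simplicial sets (via $\pi_\ast \mathbb{Z}S \cong H_\ast(S;\mathbb{Z})$ and Dold--Kan) finishes the argument.

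For a general small site, however, your central reduction is stated incorrectly and, as written, fails. It is not true that a local weak equivalence becomes a \emph{sectionwise} weak equivalence after pulling back along a Boolean localisation $p \colon \mathrm{Sh}(\mathscr{B}) \to \mathrm{Sh}(\mathscr{C})$: the correct statement is that $f$ is a local weak equivalence if and only if $p^\ast L^2 \mathrm{Ex}^\infty(f)$ is a sectionwise weak equivalence, where $L^2$ is sheafification and $\mathrm{Ex}^\infty$ is a sectionwise (hence locally) fibrant replacement --- on a complete Boolean algebra, local weak equivalences are sectionwise only between locally fibrant objects. Since $\mathbb{Z}(-)$ does not commute with $\mathrm{Ex}^\infty$, your chain ``local $\Rightarrow$ sectionwise upstairs $\Rightarrow$ apply the simplicial-set case $\Rightarrow$ local downstairs'' breaks at the first arrow. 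The standard repair is cheap but must be said: the natural maps $X \to \mathrm{Ex}^\infty X$ are sectionwise weak equivalences, so $\mathbb{Z}X \to \mathbb{Z}\mathrm{Ex}^\infty X$ is a sectionwise, hence local, weak equivalence by the classical fact; the map $\mathrm{Ex}^\infty f$ is a local weak equivalence of locally fibrant objects, to which the Boolean reduction genuinely applies, giving that $\mathbb{Z}\mathrm{Ex}^\infty f$ is a local weak equivalence; two-out-of-three then yields the claim for $\mathbb{Z}f$. Finally, you have located the difficulty in the wrong place: the commutation of $\mathbb{Z}(-)$ with sheafification and with inverse image functors is the formal part of the argument (the free abelian group object is built from colimits and finite limits, all preserved by exact left adjoints such as $p^\ast$ and $L^2$); the genuine content lies in the locally-fibrant replacement step above, which is exactly what your sketch elides.
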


Isomorphism \ref{eq:dold} follows from the Dold-Kan correspondence.  This takes us into the setting of the homotopy category of simplicial abelian presheaves with morphisms now in the derived category.  Finally, isomorphisms \ref{eq:final} and \ref{eq:final2} can be taken simply as the corresponding definitions.

To mirror this construction in the equivariant case, we first need to consider what our coefficients will be.  In particular, we wish to define an equivariant Eilenberg-MacLane space. To do this, we revisit the construction of classical Eilenberg-MacLane spaces via the linearisation of spheres \cite[Example 1.14]{schwedebook2}.

We begin by recalling that the \emph{simplicial circle} is the simplicial set defined to be $S^1_\Delta := \Delta[1]/\partial\Delta[1]$ and the \emph{simplicial $n$-sphere} is defined to be the $n$-fold smash product: 
$$S_\Delta^n := \underbrace{S_\Delta^1 \wedge \cdots \wedge S_\Delta^1}_n.$$
We can then consider for any abelian group $A$, the space $A \otimes \mathbb{Z}[S_\Delta^n]$ where the second term is the free abelian group generated by the level-wise non-base-points of $S_\Delta^n$.  That is, we can see the points of $A \otimes \mathbb{Z}[S_\Delta^n]$ to be finite sums of points in $S_\Delta^n$ with coefficients in $A$ modulo the relation that all $A$-multiples of the base-point are zero.  One can equip this space with the induced quotient topology where $A$ is given the discrete topology.

\begin{proposition}[{\cite[Corollary 6.4.23]{MR1908260}}]
For any abelian group $A$ the space $A \otimes \mathbb{Z}[S_\Delta^n]$ is a $K(A,n)$ space.  Note that for $n \geq 1$ the space has a unique connected component and therefore we do not need to specify a base-point.
\end{proposition}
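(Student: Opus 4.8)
The plan is to establish that $A \otimes \mathbb{Z}[S_\Delta^n]$ is an Eilenberg--MacLane space $K(A,n)$ by computing its homotopy groups directly via the homology of the spheres $S_\Delta^n$ together with the Dold--Kan correspondence. The essential point is that the simplicial abelian group underlying $A \otimes \mathbb{Z}[S_\Delta^n]$ is, up to a choice of basepoint relations, the reduced free simplicial abelian group $\tilde{\mathbb{Z}}[S_\Delta^n]$ tensored with $A$, and the normalised chain complex of a free simplicial abelian group computes reduced homology. First I would recall the reduced homology of the simplicial $n$-sphere: since $S_\Delta^n$ is a simplicial model for the topological $n$-sphere, its reduced homology is concentrated in degree $n$, where it equals $\mathbb{Z}$. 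Tensoring with $A$ and invoking the universal coefficient theorem (noting there is no $\mathrm{Tor}$ contribution because the homology is free and concentrated in a single degree) yields that $\tilde{H}_\ast(A \otimes \mathbb{Z}[S_\Delta^n]) \cong A$ in degree $n$ and vanishes elsewhere.

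The second step is to translate this homological statement into a statement about homotopy groups. Because $A \otimes \mathbb{Z}[S_\Delta^n]$ is (the realisation of) a simplicial abelian group, it is a topological abelian group and in particular a simple space, so its homotopy groups agree with the homotopy groups of the associated chain complex under Dold--Kan. Concretely, for a simplicial abelian group the homotopy groups are naturally isomorphic to the homology of its normalised Moore complex, so $\pi_k(A \otimes \mathbb{Z}[S_\Delta^n]) \cong H_k$ of that complex, which by the previous step is $A$ for $k = n$ and trivial otherwise. This is exactly the defining property of a $K(A,n)$.

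I would then address the connectedness remark separately: for $n \geq 1$ the space $S_\Delta^n$ is connected and has a single non-degenerate vertex, so the linearised object $A \otimes \mathbb{Z}[S_\Delta^n]$ has $\pi_0 = 0$, giving a unique connected component and removing any ambiguity in the choice of basepoint. This follows immediately from the vanishing of $H_0$ of the reduced complex for $n \geq 1$.

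The main obstacle I anticipate is the careful bookkeeping of the quotient by basepoint relations: one must verify that forming the free abelian group on the \emph{non-basepoint} simplices, modulo $A$-multiples of the basepoint being zero, genuinely reproduces the \emph{reduced} free simplicial abelian group $A \otimes \tilde{\mathbb{Z}}[S_\Delta^n]$ whose homology is the reduced homology of the sphere, rather than the unreduced version. Once this identification is pinned down the homotopy computation is routine, so the crux of the argument is ensuring the reduced/unreduced distinction is handled correctly and that the smash-product definition of $S_\Delta^n$ is compatible with the expected homology (which can be verified inductively using the suspension isomorphism, since smashing with $S_\Delta^1$ shifts reduced homology up by one degree).
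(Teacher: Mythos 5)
Your argument is correct and essentially complete in outline: identify $A \otimes \mathbb{Z}[S_\Delta^n]$ with the reduced linearisation $A \otimes \tilde{\mathbb{Z}}[S_\Delta^n]$, use that it is a simplicial abelian group (hence Kan, hence its realisation has the same homotopy groups), apply Dold--Kan to convert homotopy groups into homology of the normalised Moore complex, and compute that homology via the reduced homology of the sphere plus universal coefficients. The only points you should pin down are the ones you already flag -- that normalisation commutes with $A \otimes (-)$ so that universal coefficients applies to a complex of free abelian groups, and that the smash-power definition of $S_\Delta^n$ has reduced homology $\mathbb{Z}$ concentrated in degree $n$ (the inductive suspension argument you sketch suffices). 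Note, however, that the paper itself gives no proof: it cites Corollary 6.4.23 of Aguilar--Gitler--Prieto, where the statement is obtained topologically via the Dold--Thom theorem, i.e.\ by showing that $X \mapsto A \otimes X$ turns cofibrations into quasifibrations and hence that $\pi_k(A \otimes X) \cong \tilde{H}_k(X;A)$ for pointed CW-complexes. Your simplicial route through Dold--Kan is a genuinely different and, in the combinatorial setting of this paper, arguably more natural argument: it avoids the quasifibration machinery entirely and works at the level of simplicial abelian groups, at the modest cost of having to verify the reduced/unreduced bookkeeping and the compatibility of geometric realisation with the simplicial smash product. Both approaches ultimately reduce to the same homological input, so either is acceptable here.
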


As we have defined the above using a simplicial construction, it can very easily be modified to the $\dG$ setting.  We begin by defining the analogue of the simplicial sphere.

\begin{definition}\label{index:equicircle}
The \emph{$\dG$-circle} is the $\dG$-set defined to be $S_\mathfrak{G}^1 := \dG[1] / \partial \dG[1]$.  The \emph{$\dG$-$n$-sphere} is the the $n$-fold smash product of $S_\mathfrak{G}^1$.
\end{definition}

\begin{lemma}
$|S_\mathfrak{G}^1|_\mathfrak{G} \simeq |\mathfrak{G}| \times SO(2)$.
\end{lemma}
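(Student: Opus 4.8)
The plan is to reduce everything to the underlying simplicial set and to the observation that the $\dG$-circle is \emph{free} on the simplicial circle. First I would invoke the commuting triangle of Proposition \ref{realsing}: the forgetful image of $|S_\mathfrak{G}^1|_\mathfrak{G}$ is exactly $|i^\ast S_\mathfrak{G}^1|$, the realisation of the underlying simplicial set, while the $|\mathfrak{G}|$-action is the canonical one of Theorem \ref{real}(3). Thus it suffices to produce an $|\mathfrak{G}|$-equivariant homeomorphism $|i^\ast S_\mathfrak{G}^1|\cong|\mathfrak{G}|\times SO(2)$. Next I would record that the representables come from the free functor $i_!\colon\textbf{sSet}\to\dG\textbf{-Set}$ (the left adjoint of $i^\ast$, i.e.\ left Kan extension along $i$), which satisfies $i_!\Delta[n]=\dG[n]$ and preserves all colimits. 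Since $S^1_\Delta=\Delta[1]/\partial\Delta[1]$ is the pushout of $\Delta[0]\leftarrow\partial\Delta[1]\hookrightarrow\Delta[1]$ and $i_!$ preserves this pushout, I obtain $S_\mathfrak{G}^1=i_!S^1_\Delta$; in particular the boundary is collapsed onto the single copy $\dG[0]=i_!\Delta[0]$ (which realises to $|\mathfrak{G}|$), not to a point.

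The crux is the following product formula, which I would isolate as the main lemma: for every simplicial set $Y$ there is a natural $|\mathfrak{G}|$-equivariant homeomorphism
$$|i^\ast i_! Y|\;\cong\;|\mathfrak{G}|\times|Y|,$$
with $|\mathfrak{G}|$ acting by left translation on the first factor. To prove it I would use the canonical projection $i_!Y\to\dG[0]$ induced by $Y\to\Delta[0]$; realising and passing to underlying spaces gives a continuous $|\mathfrak{G}|$-equivariant map $p\colon|i^\ast i_! Y|\to|\mathfrak{G}|$. Because $|\mathfrak{G}|$ is a compact Lie group and the action on the target is the free, transitive regular action, the shearing map $(g,x)\mapsto g\cdot x$ defines a homeomorphism $|\mathfrak{G}|\times p^{-1}(e)\xrightarrow{\cong}|i^\ast i_! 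Y|$ with inverse $y\mapsto(p(y),p(y)^{-1}y)$, realising the total space as a trivial bundle with the action concentrated on the $|\mathfrak{G}|$-factor. It then remains to identify the fibre $p^{-1}(e)$ naturally with $|Y|$. Here I would check the case $Y=\Delta[n]$ directly --- using the canonical decomposition to see $i^\ast\dG[n]$ as a (twisted) copy of $\mathfrak{G}\times\Delta[n]$ whose fibre over the identity is $\Delta^n$ --- and then extend to arbitrary $Y$ by writing $Y$ as a colimit of simplices, since both $|i^\ast i_!(-)|$ and $|\mathfrak{G}|\times|{-}|$ preserve colimits.

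With the lemma in hand the result is immediate: applying it to $Y=S^1_\Delta$ and using $i_!S^1_\Delta=S_\mathfrak{G}^1$ together with $|S^1_\Delta|=S^1\cong SO(2)$ yields $|i^\ast S_\mathfrak{G}^1|\cong|\mathfrak{G}|\times SO(2)$, and the two boundary circles of the cylinder $|i^\ast\dG[1]|\cong|\mathfrak{G}|\times\Delta^1$ are glued fibrewise (rather than crushed to a point) precisely because the collapse target is $\dG[0]$. I expect the genuine obstacle to lie in the fibre identification inside the main lemma: the underlying simplicial set $i^\ast i_! Y$ is only a \emph{twisted} product of $\mathfrak{G}$ and $Y$ --- the crossed-group structure of Proposition \ref{gobject} makes the simplicial operators mix the two factors --- so one cannot simply project onto $Y$. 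The content is that after geometric realisation this twist is absorbed by the regular action of $|\mathfrak{G}|$, leaving an honest product; making this precise and compatible with the colimit reduction is the step requiring the most care, whereas the reduction to the cyclic prototype and the equivariant trivialisation are formal.
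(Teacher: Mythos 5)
Your proposal is correct and follows essentially the same route as the paper: identify $S_\mathfrak{G}^1$ as the free $\dG$-set on the simplicial circle, so that its realisation splits as $|\mathfrak{G}|$ times $|S^1_\Delta|\cong SO(2)$. The paper's entire proof is the one-line chain $|S_\mathfrak{G}^1|_\mathfrak{G} \simeq |\mathfrak{G} \times \Delta[1]/\partial \Delta[1]| \simeq |\mathfrak{G}| \times |\Delta[1]/\partial \Delta[1]| \simeq |\mathfrak{G}| \times SO(2)$, which simply asserts the product decomposition at the simplicial level; your shearing-map argument supplies the justification for untwisting the canonical decomposition that the paper elides.
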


\begin{proof}
$$|S_\mathfrak{G}^1|_\mathfrak{G} \simeq |\mathfrak{G} \times \Delta[1]/\partial \Delta[1]| \simeq |\mathfrak{G}| \times |\Delta[1]/\partial \Delta[1]| \simeq |\mathfrak{G}| \times SO(2).$$
\end{proof}

\begin{definition}\label{index:equimaclane}
Let $A$ be an abelian group, we define the $\dG$-$K(A,n)$ object to be
$$K^\mathfrak{G}(A,n) := A \otimes \mathbb{Z}[S_\mathfrak{G}^n],$$
where $ \mathbb{Z}[S_\mathfrak{G}^n]$ is the abelian group freely generated by the level-wise non-base-point elements.
\end{definition}

By considering the underlying simplicial object $i^\ast(K^\mathfrak{G}(A,n)) \simeq K(A,n)$ we see that such an object is both $n$-connected and $n$-truncated as we would expect.

We now look at altering the site cohomology from the simplicial case to the equivariant case using the equivariant Eilenberg-MacLane objects.

\begin{definition}\label{index:equicohomology}
Let $\mathscr{C}$ be a site and $\mathbb{H}^m_\mathfrak{G}(\mathscr{C}):= \text{Ho}( \dG \textbf{-Pr}_\tau^m\left(\mathscr{C}\right))$ the corresponding equivariant topos where $m$ is one of the model choices (weak, strong, coupled).  Let $A$ be a sheaf of abelian groups. We define the $\dG^m$-cohomology of $\mathscr{C}$ to be
$$H^n_{\mathfrak{G},m}(\mathscr{C};A) := [\ast,K^\mathfrak{G}(A,n)]$$
\end{definition}

Clearly the choice of model structure plays a part in the cohomology defined above.  The weak model structure is the coarsest model, and retrieves Borel style cohomology, whereas the strong and coupled model are finer, and are of Bredon type.  This construction has been done using presheaves of $G$-spaces, in contrast, what we have done here is a combinatorial discrete formalisation \cite{equimodels}.

We will unpack this definition for the cyclic case.  We work with the cyclic case as we wish to produce a series of natural isomorphisms as in the non-equivariant case, and all such isomorphisms can be described using work that has been done in the cyclic setting.  We begin by proving the equivariant version of the Illusie conjecture, for which we must consider the category $\dC \textbf{-R-mod}$ of $\dC$-objects in the category of $R$-modules.  There is a free functor $C : \dC \textbf{Set} \to \dC \textbf{-R-mod}$, arising as a left adjoint to the forgetful functor going the other way.  The category $\dC\textbf{-R-mod}$ can be equipped with the corresponding weak/string/coupled model via a transfer.  We promote the free functor $C$ to $\dC$-presheaves by applying it section-wise.  Using this, it is possible to move from the setting of $\dC$-presheaves to $\dC$-abelian sheaves:
$$[\ast,K^\mathfrak{G}(A,n)] \to [C \ast, K^\mathfrak{G}(A,n)]$$
and in fact, this association is an isomorphism due to the following proposition.

\begin{proposition}[Equivariant Illusie Conjecture]
The free $\dG$-abelian presheaf functor $X \mapsto CX$ considered as a map $\dC \textbf{-Pr}^m_\tau(\mathscr{C}) \to \dC \textbf{-Pr}^m_\tau(\mathscr{C})$ preserves local equivalences.
\end{proposition}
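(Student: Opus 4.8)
The plan is to reduce the statement, one model at a time, to the classical Illusie conjecture (Proposition~\ref{prop:illusie}) by exploiting the explicit detection of local weak equivalences recorded in Section~\ref{sec4}. For the weak model the reduction is immediate: a map $f$ is a weak local equivalence precisely when $i^\ast(f)$ is a local weak equivalence of simplicial presheaves, and since $C$ is applied section-wise and commutes with the forgetful functor we have $i^\ast(Cf)\cong \mathbb{Z}[\,i^\ast f\,]$, the free simplicial abelian presheaf on $i^\ast(f)$; Proposition~\ref{prop:illusie} then gives the claim directly. For the strong model I would instead invoke the characterisation (the remark following the definition of strong local weak equivalence) that $f$ is a strong local weak equivalence if and only if $\Phi_r(f)$ is a local weak equivalence of simplicial presheaves for every $r\geq 1$, together with the fact that the transferred model on $\dC\textbf{-R-mod}$ has its weak equivalences detected by the same functors $\Phi_r$. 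Thus it suffices to show each $\Phi_r(Cf)$ is a local weak equivalence of simplicial $R$-module presheaves.

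The key computation is that $C$ commutes with edgewise subdivision, since $\text{sd}_r$ is a reindexing and $C$ is applied level-wise: one obtains $\text{sd}_r(CX)=\mathbb{Z}[\text{sd}_r X]$ as a simplicial $R$-module with $\mathbb{Z}/r$-action, namely the permutation module on the $\mathbb{Z}/r$-simplicial set $\text{sd}_r X$, whence $\Phi_r(CX)=(\mathbb{Z}[\text{sd}_r X])^{\mathbb{Z}/r}$. The main obstacle is that this does \emph{not} agree with $\mathbb{Z}[\Phi_r X]$: the fixed points of a permutation module form the free module on the \emph{orbit} set, not on the fixed-point set. I would resolve this using the isotropy filtration of $\text{sd}_r X$ as a $\mathbb{Z}/r$-set: the strata indexed by the divisors $d\mid r$ assemble, naturally and compatibly with the (equivariant) simplicial operators, into a finite filtration of $\Phi_r(CX)$ whose associated graded pieces are free $R$-modules on fixed-point simplicial sets $(\text{sd}_r X)^{\mathbb{Z}/d}$, each a subdivision of some $\Phi_d X$ twisted by a free Weyl-group action. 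This is the combinatorial shadow of the tom Dieck splitting. Because each $\Phi_d f$ is a local weak equivalence by hypothesis, Proposition~\ref{prop:illusie} sends every graded piece to a local weak equivalence; since local weak equivalences of simplicial $R$-module presheaves are detected by the exact homotopy-sheaf functors and hence satisfy a five-lemma for short exact sequences, an induction up the filtration yields that $\Phi_r(Cf)$ is a local weak equivalence.

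For the coupled model, where the category is the comma construction $\textbf{sPr}_\tau(\mathscr{C})_{\nabla^\ast}\dC\textbf{-Pr}_\tau^s(\mathscr{C})$, a map is a local weak equivalence exactly when both legs are (Theorem~\ref{coupledmodeltheorem}): the simplicial-presheaf leg $A\to A'$ and the $\dC$-presheaf leg $B\to B'$. As $C$ is a left adjoint it preserves the defining pushouts and acts leg-wise, as the free simplicial abelian presheaf on $A\to A'$ and as $C$ on $B\to B'$; the first leg is handled by Proposition~\ref{prop:illusie} and the second by the strong case just established, so $Cf$ is a local weak equivalence, the compatibility with the structure map $\nabla^\ast A\to B$ and with the cofibration corner map following from $C$ preserving pushouts. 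The single genuinely technical point of the whole argument is the fixed-point/free-module splitting of the second paragraph; once its naturality and simplicial compatibility are checked, every remaining case reduces \emph{mutatis mutandis} to Osdol's theorem, just as the other results of this section reduce to their cyclic prototypes, and the same scheme applies to a general planar $\dG$ after replacing $\Phi_r$ by the relevant family of subdivision-fixed-point functors.
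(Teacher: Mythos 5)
Your reduction is the same as the paper's: the published proof consists, in its entirety, of the observation that strong local equivalences are detected by the functors $\Phi_r$, the bare assertion that $\Phi_r CX \simeq C\Phi_r X$, and an appeal to Proposition~\ref{prop:illusie}, with the weak and coupled models dismissed as following easily. The point at which you diverge is exactly the point the paper treats as obvious, and you are right to be suspicious of it: the $\mathbb{Z}/r$-fixed points of the permutation module $R[\text{sd}_r X]$ are spanned by orbit sums, so $\Phi_r(CX)$ is levelwise the free module on the \emph{orbit} set $\text{sd}_r X/(\mathbb{Z}/r)$, whereas $C\Phi_r(X)$ is the free module on the fixed-point set; for a levelwise free action the former is nonzero and the latter vanishes, so the asserted commutation is neither an isomorphism nor, in general, a weak equivalence. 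Flagging this is the most valuable part of your write-up; your weak-model case, where $i^\ast$ involves no fixed points, is correct and does match the paper.

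That said, your proposed repair of the strong case is a plan rather than a proof, and it has concrete gaps. The exact-isotropy strata of $\text{sd}_r X$ are not simplicial subobjects (a face of a simplex with stabiliser exactly $C_d$ can have strictly larger stabiliser), so the filtration must be built from the closed subobjects $(\text{sd}_r X)^{C_d}$, and its graded pieces are free modules on Weyl-quotients of the \emph{free} strata, not on the fixed-point sets $\Phi_d X$ as you state. Your hypothesis only gives local equivalences on the full fixed-point objects $(\text{sd}_r X)^{C_d}$ (which, up to a further edgewise subdivision, are the $\Phi_d X$); passing from these to local equivalences on complements of strata and on quotients by free actions is not a five-lemma formality --- sectionwise it is the equivariant Whitehead theorem, and in the local setting it has to be run on stalks or through a Boolean localisation. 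Finally, in the coupled case the first leg lives in $\textbf{sPr}(\mathscr{C})$, so ``$C$ acts leg-wise'' requires comparing two different free functors across the structure map $\nabla^\ast A \to B$, which you do not address. None of these obstacles looks fatal, but until your second paragraph is carried out in detail your argument, like the paper's, rests on an unproved interchange of free modules and fixed points.
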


\begin{proof}
Recall from the previous section that the strong (and indeed the weak and coupled) models rely on the subdivision functors     $\Phi_r : \dC\textbf{-Set} \to \textbf{sSet}$.  We will prove the result for the case of the strong model, the weak and coupled models follow easily from this. First we note that for any subdivision $\Phi_r CX \simeq C \Phi_r X$.  Now we note that a map $f$ is a weak equivalence in $\dC \textbf{-Pr}_\tau^s (\mathscr{C})$ if and only if $\Phi_r f$ is a weak equivalence in $\textbf{sPr}_\tau(\mathscr{C})$ for all $r \geq 0$.  Therefore our desired result follows from the simplicial Illusie conjecture, Proposition \ref{prop:illusie}.
\end{proof}

\begin{remark}
Although we have shown the above only for the cyclic case, we can see that it holds for all of the planar crossed simplicial groups as the subdivision functors all have the required property that we use in the proof.
\end{remark}

We have now replicated the first step from the simplicial case. Recall that the next step was the isomorphism $[\tilde{Z}\ast,K(A,n)] \simeq [\tilde{Z}[0],A[-n]]$ from the maps in the homotopy category of simplicial abelian sheaves to maps between sheaves of chain complexes which was facilitated using the Dold-Kan correspondence.  In the general equivariant case, such an isomorphism is unaccessible as we would need some Dold-Kan style theorem between the abelian objects and certain ``chain complexes''.  However, in the cyclic case such a construction exists as a consequence of work on so-called duchain complexes (see \cite{cyclic,DWYER1987165,MR3320603}).  We will not introduce the full theory theory but just give the relevant result.  First, we introduce what we mean by \emph{cyclic chain} and \emph{duchain complexes}.

\begin{definition}
A \emph{duchain complex} over $R$ is a diagram of $R$-modules
$$\xymatrix{X_0 \ar@<-0.5ex>[r]_{\delta_0} & X_1 \ar@<-0.5ex>[r]_{\delta_1} \ar@<-0.5ex>[l]_{\partial_1} &  X_2 \ar@<-0.5ex>[r]_{\delta_2} \ar@<-0.5ex>[l]_{\partial_2} &  \cdots  \ar@<-0.5ex>[l]_{\partial_3}}$$
such that $\partial^2 = \delta^2 = 0$, but otherwise the $\partial$'s and $\delta$'s are independent. A map of duchain complexes is a set of maps $f_i \colon X_i \to Y_i$ such that all the obvious squares commute.

Denote by $f_i(t) = \left( 1+ (-1)^i t \right)^{i+1}$ for $i \geq 0$.  A \emph{cyclic chain complex} is a duchain complex such that
\begin{itemize}
\item $f_{n-1}f_n(\delta \partial)x=x$, for all $x \in X_n$, $n>0$.
\item $f_0(\delta \partial)x =x$. for all $x \in X_0$.
\end{itemize}
\end{definition}

We can now use this notion of cyclic complexes to give the statement of the ``cyclic Dold-Kan theorem''.

\begin{proposition}[{\cite[Proposition 2.2]{DWYER1987165}}]
The category of cyclic chain complexes over $R$ is equivalent to the category of $\dC$-$R$-modules.
\end{proposition}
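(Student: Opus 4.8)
The plan is to bootstrap from the classical Dold--Kan correspondence and to transport the extra cyclic structure across it. Write $N \colon s(R\text{-mod}) \rightleftarrows \text{Ch}_{\geq 0}(R) \colon \Gamma$ for the normalisation equivalence between simplicial $R$-modules and non-negatively graded chain complexes. By Proposition \ref{gobject}, a $\dC$-$R$-module is precisely a simplicial $R$-module $X_\ast$ together with cyclic operators $\tau_n \colon X_n \to X_n$ subject to Connes' relations with the faces and degeneracies and to $\tau_n^{n+1} = \mathrm{id}$. The whole problem therefore reduces to understanding what structure the family $\{\tau_n\}$ induces on the normalised complex $NX$, and to showing that this structure is exactly that of a cyclic chain complex.

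First I would build the functor from $\dC$-$R$-modules to cyclic chain complexes. The underlying graded module is $NX$, with lowering differential $\partial$ the restriction of the usual alternating face map (on $NX$ only the top face survives, up to sign). To produce the raising map $\delta \colon (NX)_n \to (NX)_{n+1}$ I would use a suitable composite of a degeneracy with the cyclic operator, of the form $\pm\,\tau_{n+1} s_n$, followed by the Dold--Kan projection onto the normalised summand; the Connes relations guarantee that this composite lands in $NX$ and that $\delta^2 = 0$. This already exhibits $(NX, \partial, \delta)$ as a duchain complex, the two differentials being a priori independent.

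Conversely, for the inverse functor I would start from a cyclic chain complex $(X_\ast, \partial, \delta)$, apply $\Gamma$ to the chain complex $(X_\ast, \partial)$ to recover a simplicial $R$-module whose module of $n$-simplices splits as $\bigoplus_{\eta} X_k$ over the surjections $\eta \colon [n] \twoheadrightarrow [k]$, and then define the cyclic operator $\tau_n$ summand-by-summand. On the bottom summand $\tau$ is forced by $\delta$, and on degenerate summands it is determined by propagating through the simplicial identities; the cyclic chain complex conditions are engineered so that the result is well defined and compatible with all faces and degeneracies. One then checks that these two assignments are mutually inverse, which is immediate on underlying simplicial modules by classical Dold--Kan and on the cyclic data by construction.

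The main obstacle is the verification that $\tau_n^{n+1} = \mathrm{id}$ corresponds exactly to the relations $f_{n-1}f_n(\delta\partial)x = x$ and $f_0(\delta\partial)x = x$, where $f_i(t) = (1+(-1)^i t)^{i+1}$. The point is that, after normalisation, the order-$(n+1)$ relation $\tau_n^{n+1}=\mathrm{id}$ collapses to a polynomial identity in the single composite $\delta\partial$ acting on $(NX)_n$, and tracking the signs and binomial coefficients produced by repeatedly commuting $\tau$ past faces and degeneracies is precisely what yields the factors $(1+(-1)^i t)^{i+1}$. Establishing this explicit polynomial identity, and confirming that it is both necessary and sufficient for the reconstructed $\tau$ to satisfy the cyclic relation, is the heart of the argument; the remaining functoriality and naturality checks are routine.
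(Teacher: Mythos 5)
The paper does not prove this proposition: it is quoted verbatim from Dwyer--Kan (\cite[Proposition 2.2]{DWYER1987165}) and used as a black box, so there is no in-text argument to compare yours against. Your strategy --- normalise via Dold--Kan, let $\partial$ be the surviving face differential, manufacture $\delta$ from the cyclic operator and a degeneracy, and translate $\tau_n^{n+1}=\mathrm{id}$ into polynomial conditions on $\delta\partial$ --- is indeed the strategy of the cited source, so as an outline it points in the right direction.

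As a proof, however, it has two genuine gaps, both of which you flag but neither of which you close. First, the entire content of the proposition is that $\tau_n^{n+1}=\mathrm{id}$ is \emph{equivalent} to the identities $f_{n-1}f_n(\delta\partial)x=x$ and $f_0(\delta\partial)x=x$ with $f_i(t)=\left(1+(-1)^it\right)^{i+1}$; saying that tracking the signs and binomial coefficients ``is precisely what yields'' these factors asserts the conclusion rather than deriving it. Without that computation you have only produced a functor to duchain complexes, not identified its essential image. Second, the inverse functor is not well defined as described: the cyclic operator does not act summand-by-summand on the Dold--Kan decomposition of $X_n$ indexed by surjections, because the crossed-simplicial relations of the form $\tau_n s_i=s_{i-1}\tau_{n+1}$ for $i\geq 1$ and $\tau_n s_0=s_n\tau_{n+1}^2$ permute the degenerate summands and introduce cross terms, so ``propagating through the simplicial identities'' requires an explicit closed formula together with a verification that the result satisfies all the relations of Proposition \ref{gobject}. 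There is also a small internal inconsistency: you first claim the Connes relations guarantee that $\pm\tau_{n+1}s_n$ lands in $NX$, and then post-compose with the projection onto the normalised summand; if the projection is needed, the landing claim is false, and which of the two you mean changes the verification of $\delta^2=0$.
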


Using the above proposition we see that we have therefore got an isomorphism to a mapping between sheaves of cyclic complexes as one would expect. In the simplicial case we had
$$ [\tilde{\mathbb{Z}}[0], A[-n]] \simeq\text{Ext}^n_\mathscr{C}(\mathbb{Z},A) \simeq H^n(\mathscr{C};A).$$
In the equivariant case the choice of model will determine what we get at this step.  What can be said is that in the weak model case, Dwyer and Kan proved that the category of $R$-modules equipped with the weak model is Quillen equivalent to another category, namely the category of \emph{mixed complexes} \cite{DWYER1987165}.  And then in \cite{MR883882} Kassel shows that there is a way to compute cyclic cohomology as derived functors on a mixed complex using an Ext construction.  Therefore, we see that in the weak cyclic case, we retrieve a cyclic cohomology, which can be seen as a Borel type equivariant cohomology theory as shown in \cite{MR870737}.  We conclude that the definition of equivariant site cohomology via equivariant $\infty$-stacks is a sensible one.  The advantage of using this definition of equivariant cohomology is that we do need need results about Dold-Kan type correspondences to compute it for a general planar Lie group, and we have control over the level of equivariance via a choice of model structure.

\bibliographystyle{siam}
\bibliography{phdbib}

\begin{thebibliography}{10}

\bibitem{MR1908260}
{\sc M.~Aguilar, S.~Gitler, and C.~Prieto}, {\em Algebraic topology from a
  homotopical viewpoint}, Universitext, Springer-Verlag, New York, 2002.
\newblock Translated from the Spanish by Stephen Bruce Sontz.

\bibitem{blumberg}
{\sc A.~J. Blumberg}, {\em A discrete model of {$S^1$}-homotopy theory}, J.
  Pure Appl. Algebra, 210 (2007), pp.~29--41.

\bibitem{bousfield1972homotopy}
{\sc A.~Bousfield and D.~Kan}, {\em Homotopy Limits, Completions and
  Localizations}, Lecture Notes in Mathematics, Springer, 1972.

\bibitem{connes1}
{\sc A.~Connes}, {\em Cohomologie cyclique et foncteur $ext^n$}, Comptes Rendue
  A, Sci, Paris S\'{e}r, 296 (1983), pp.~953--958.

\bibitem{connes2}
{\sc A.~{Connes} and C.~{Consani}}, {\em {Cyclic structures and the topos of
  simplicial sets}}, ArXiv e-prints,  (2013).

\bibitem{homotopycc}
{\sc W.~Dwyer, M.~Hopkins, and D.~Kan}, {\em {T}he {H}omotopy {T}heory of
  {C}yclic {S}ets}, Transactions of the {A}merican {M}athematical {S}ociety,
  291 (1985), pp.~281--289.

\bibitem{cyclic}
{\sc W.~Dwyer and D.~Kan}, {\em Normalizing the cyclic modules of {C}onnes},
  Commentarii {M}athematici {H}elvetici, 60 (1985), pp.~582--600.

\bibitem{DWYER1987165}
\leavevmode\vrule height 2pt depth -1.6pt width 23pt, {\em Three homotopy
  theories for cyclic modules}, J. Pure Appl. Algebra, 44 (1987), pp.~165 --
  175.

\bibitem{surface}
{\sc T.~Dyckerhoff and M.~Kapranov}, {\em {Crossed simplicial groups and
  structured surfaces}}, ArXiv e-prints,  (2015).

\bibitem{elmendorf}
{\sc A.~D. Elmendorf}, {\em Systems of fixed point sets}, Trans. Amer. Math.
  Soc., 277 (1983), pp.~275--284.

\bibitem{MR1295162}
{\sc Z.~Fiedorowicz and W.~Gajda}, {\em The {$S^1\text{-}{\rm CW}$}
  decomposition of the geometric realization of a cyclic set}, Fund. Math., 145
  (1994), pp.~91--100.

\bibitem{loday}
{\sc Z.~Fiedorowicz and J.~L. Loday}, {\em {C}rossed {S}implicial {G}roups and
  their {A}ssociated {H}omology}, Transactions of the {A}merican {M}athematical
  {S}ociety, 326 (1991), pp.~57--87.

\bibitem{equimodels}
{\sc B.~Guillou}, {\em A short note on models for equivariant homotopy theory},
  2006.
\newblock \url{http://www.math.uiuc.edu/~bertg/EquivModels.pdf}.

\bibitem{quillenpre}
{\sc J.~Jardine}, {\em Simplicial presheaves}, J. Pure Appl. Algebra, 47
  (1987), pp.~35--87.

\bibitem{jardinesimplicialpresheaves}
\leavevmode\vrule height 2pt depth -1.6pt width 23pt, {\em Lectures on
  simplical presheaves}, 2007.
\newblock http://www-home.math.uwo.ca/~jardine/papers/Fields-01.pdf.

\bibitem{MR3309296}
\leavevmode\vrule height 2pt depth -1.6pt width 23pt, {\em Local homotopy
  theory}, Springer Monographs in Mathematics, Springer, New York, 2015.

\bibitem{MR870737}
{\sc J.~D.~S. Jones}, {\em Cyclic homology and equivariant homology}, Invent.
  Math., 87 (1987), pp.~403--423.

\bibitem{MR883882}
{\sc C.~Kassel}, {\em Cyclic homology, comodules, and mixed complexes}, J.
  Algebra, 107 (1987), pp.~195--216.

\bibitem{laxlimit}
{\sc G.~M. Kelly}, {\em Elementary observations on {$2$}-categorical limits},
  Bull. Austral. Math. Soc., 39 (1989), pp.~301--317.

\bibitem{krasauskas}
{\sc R.~Krasauskas}, {\em {S}kew-{S}implicial {G}roups}, Litovskii
  {M}atematicheskii {S}bornik, 27 (1987), pp.~89--99.

\bibitem{dihedral}
{\sc J.~Loday}, {\em Homologies di\'edrale et quaternionique}, Adv. in Math.,
  66 (1987), pp.~119--148.

\bibitem{loday1998cyclic}
\leavevmode\vrule height 2pt depth -1.6pt width 23pt, {\em Cyclic Homology},
  Die Grundlehren der mathematischen Wissenschaften in Einzeldarstellungen,
  Springer, 1998.

\bibitem{lane1998categories}
{\sc S.~MacLane}, {\em Categories for the working mathematician},
  Springer-Verlag, New York-Berlin, 1971.
\newblock Graduate Texts in Mathematics, Vol. 5.

\bibitem{equivariantmay}
{\sc J.~P. May}, {\em Equivariant homotopy and cohomology theory}, vol.~91 of
  CBMS Regional Conference Series in Mathematics, Published for the Conference
  Board of the Mathematical Sciences, Washington, DC; by the American
  Mathematical Society, Providence, RI, 1996.
\newblock With contributions by M. Cole, G. Comeza{\~n}a, S. Costenoble, A. D.
  Elmendorf, J. P. C. Greenlees, L. G. Lewis, Jr., R. J. Piacenza, G.
  Triantafillou, and S. Waner.

\bibitem{quillen}
{\sc D.~Quillen}, {\em Rational {H}omotopy {T}heory}, Annals of Mathematics, 90
  (1969), pp.~205--295.

\bibitem{MR1629858}
{\sc S.~Schwede}, {\em Global homotopy theory}.
\newblock Book in preperation, v0.30 -
  \url{http://www.math.uni-bonn.de/people/schwede/global.pdf}.

\bibitem{schwedebook2}
\leavevmode\vrule height 2pt depth -1.6pt width 23pt, {\em Symmetric spectra}.
\newblock Book in preperation, v3 -
  \url{http://www.math.uni-bonn.de/people/schwede/SymSpec-v3.pdf}.

\bibitem{seteklevmaster}
{\sc S.~Seteklev}, {\em Homotopy theories of cyclic objects}, 2007.
\newblock Masters Thesis, University of Oslo.

\bibitem{cyclicsheaves}
{\sc S.~Seteklev and P.~A. {\O}stv{\ae}r}, {\em Homotopy theory of cyclic
  presheaves}, Math. Scand., 107 (2010), pp.~30--42.

\bibitem{spalinskistrong}
{\sc J.~Spali{\'n}ski}, {\em Strong homotopy theory of cyclic sets}, J. Pure
  Appl. Algebra, 99 (1995), pp.~35 -- 52.

\bibitem{Spaliński2000557}
\leavevmode\vrule height 2pt depth -1.6pt width 23pt, {\em Homotopy theory of
  dihedral and quaternionic sets}, Topology, 39 (2000), pp.~557 -- 572.

\bibitem{discrete02}
\leavevmode\vrule height 2pt depth -1.6pt width 23pt, {\em A discrete model of
  {${\rm O}(2)$}-homotopy theory}, J. Pure Appl. Algebra, 214 (2010), pp.~1--5.

\bibitem{MR3320603}
\leavevmode\vrule height 2pt depth -1.6pt width 23pt, {\em The
  lifting-extension problem for duchain complexes of {D}wyer and {K}an},
  Demonstr. Math., 48 (2015), pp.~21--30.

\bibitem{cstft}
{\sc W.~H. Stern}, {\em {Structured Topological Field Theories via Crossed
  Simplicial Groups}}, ArXiv Mathematics e-prints,  (2016).

\bibitem{toenab}
{\sc B.~To\"en}, {\em Stacks and non-abelian cohomology}, 2002.
\newblock Unpublished lecture notes -
  \url{http://www.msri.org/publications/ln/msri/2002/introstacks/toen/1/meta/aux/toen.ps}.

\bibitem{MR3285853}
\leavevmode\vrule height 2pt depth -1.6pt width 23pt, {\em Derived algebraic
  geometry}, EMS Surv. Math. Sci., 1 (2014), pp.~153--240.

\bibitem{toen2}
{\sc B.~To{\"e}n and G.~Vezzosi}, {\em Homotopical algebraic geometry. {II}.
  {G}eometric stacks and applications}, Mem. Amer. Math. Soc., 193 (2008).

\bibitem{MR0478160}
{\sc D.~H. Van~Osdol}, {\em Simplicial homotopy in an exact category}, Amer. J.
  Math., 99 (1977), pp.~1193--1204.

\end{thebibliography}

\end{document}